\title[Boundedness of B-pluricanonical representations]
{Boundedness of log pluricanonical representations of log Calabi--Yau pairs in dimension 2}
\author{Chen Jiang and Haidong Liu}
\date{2020/2/21, version 0.04}
\subjclass[2010]{Primary 14E30; Secondary 14E07, 14J32}
\keywords{log pluricanonical representation, boundedness, index conjecture}
\address{Fudan University, Shanghai Center for Mathematical Sciences, 
Shanghai, 200438, China}
\email{chenjiang@fudan.edu.cn}
\address{Peking University, Beijing International Center for Mathematical Research, 
Beijing, 100871, China}
\email{hdliu@bicmr.pku.edu.cn}
\DeclareMathOperator{\Supp}{Supp}
\DeclareMathOperator{\Bir}{Bir}
\DeclareMathOperator{\Aut}{Aut}
\DeclareMathOperator{\id}{id}
\DeclareMathOperator{\PA}{PA}
\DeclareMathOperator{\A}{A}
\DeclareMathOperator{\Spec}{Spec}
\newtheorem{thm}{Theorem}[section]
\newtheorem{lem}[thm]{Lemma}
\newtheorem{prop}[thm]{Proposition}
\newtheorem{conj}[thm]{Conjecture}
\newtheorem{cor}[thm]{Corollary}
\newtheorem*{conja}{Conjecture A}
\newtheorem*{conjb}{Conjecture B}
\newtheorem*{conjb'}{Conjecture B'}
\newtheorem*{conjc}{Conjecture C}
\theoremstyle{definition}
\newtheorem{defn}[thm]{Definition}
\newtheorem{rem}[thm]{Remark}
\newtheorem*{ack}{Acknowledgments}
\begin{document}

\begin{abstract}
We show the
 boundedness of B-pluricanonical representations of lc log Calabi--Yau pairs in dimension $2$. 
As applications, we prove the boundedness of indices of slc log Calabi--Yau pairs up to dimension $3$ and that of non-klt lc log Calabi--Yau pairs in dimension $4$.
\end{abstract} 

\maketitle 

\tableofcontents

\section{Introduction}\label{sec1}

In the framework of Fujino \cite{fujino-ab}, the finiteness of B-pluricanonical representations
(or equally, log pluricanonical representations, see Definition \ref{b-rep}) plays an important role in the study of the abundance conjecture in the minimal model program. The finiteness of B-pluricanonical representations was investigated by Fujino \cite{fujino-ab} and Gongyo \cite{gongyo} after the work of Nakamura--Ueno \cite{nu} and Deligne \cite[Section 14]{ueno}, and proved in its full generality by Fujino--Gongyo \cite{fujino-gongyo}.

In this paper, we are interested in the B-pluricanonical representations
of log Calabi--Yau pairs. Log Calabi--Yau pairs form an important class in the minimal model program. It is expected that log Calabi--Yau pairs should satisfy certain boundedness properties, see \cite{alexeev, alexeev-mori, DCS, rccy3, Birkar18, jiang, xu2} for related works. Therefore, it is natural to consider the following conjecture on boundedness of B-pluricanonical representations of log Calabi--Yau pairs (cf. \cite[Conjecture 3.2]{fujino-index}, \cite[Conjecture 1.9]{xu2}, \cite[Conjecture 8.3]{fmx19}).

\begin{conj}\label{main-conj}
Let $m, d$ be two positive integers. 
Then there exists a positive integer $N$ depending only on $m, d$ satisfying the following property:
if $(X, \Delta)$ is a projective connected
lc pair of dimension $d$ such that $m(K_X+\Delta)\sim 0$, then
$|\rho_{km}(\Bir(X,\Delta))|\leq N$ for any positive integer $k$. 
\end{conj}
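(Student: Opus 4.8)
The plan is to first eliminate the dependence on $k$, reducing the statement to a uniform bound on a single finite cyclic group, and then to bound that group by induction on $d$ via adjunction. Since $m(K_X+\Delta)\sim 0$ and $X$ is normal, projective and connected, the section ring $R=\bigoplus_{k\ge 0}H^0(X,\mathcal{O}_X(km(K_X+\Delta)))$ is a polynomial ring in one variable, generated by any nonzero $s\in H^0(X,\mathcal{O}_X(m(K_X+\Delta)))=\mathbb{C}s$, and $\Bir(X,\Delta)$ acts on $R$ by graded algebra automorphisms because the pullback of log pluricanonical forms is multiplicative. Hence $g^*s=\lambda_g s$ forces $g^*(s^k)=\lambda_g^k s^k$, so $\rho_{km}(g)=\rho_m(g)^k$, and $\rho_{km}(\Bir(X,\Delta))$ is the image of the finite cyclic group $G:=\rho_m(\Bir(X,\Delta))\subset\mathbb{C}^*$ under the $k$-th power map. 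In particular $|\rho_{km}(\Bir(X,\Delta))|$ divides $|G|$ for all $k$, so it suffices to bound $|G|$ by some $N=N(m,d)$ independent of $k$ and of the pair.

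Since $\rho_m$ is a B-birational invariant, I would replace $(X,\Delta)$ by a $\mathbb{Q}$-factorial dlt crepant model, leaving $G$ unchanged while making the boundary $\lfloor\Delta\rfloor$ and its strata transparent, and then induct on $d$. If $(X,\Delta)$ is klt, then $\lfloor\Delta\rfloor=0$ and $K_X+\Delta\sim_{\mathbb{Q}}0$; passing to the index-one cover identifies $G$ with the image of a group of birational automorphisms acting on the pluricanonical form of a Calabi--Yau surface, which in dimension $2$ is bounded by the classification of such surfaces and the finiteness of their canonical automorphism characters (of Nikulin type).

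If instead $\lfloor\Delta\rfloor\neq 0$, I would run adjunction: the Poincar\'e residue of the log pluricanonical form along a minimal lc center $W$ gives a $\Bir(X,\Delta)$-equivariant map into $H^0(W,\mathcal{O}_W(m(K_W+\Delta_W)))$, where $(W,\Delta_W)$ is again a connected lc log Calabi--Yau pair of dimension $<d$ with $m(K_W+\Delta_W)\sim 0$. By the inductive hypothesis the induced action on $W$ lies in a group of order at most $N(m,d-1)$, so it remains to bound the residual part of $G$ acting trivially on every lc center. This transverse part is governed by the action of $\Bir(X,\Delta)$ on the iterated residues of $s$ along the strata, equivalently on the top homology of the dual complex $\mathcal{D}(\lfloor\Delta\rfloor)$; since for a log Calabi--Yau surface this complex is homotopy equivalent to a point, a segment, or a circle, the induced action is essentially an orientation character and has order at most $2$, even when the complex itself is combinatorially large.

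The step I expect to be the main obstacle is exactly this transverse estimate in the strictly lc case: bounding the scalar by which an element of $\Bir(X,\Delta)$ can act on $s$ while fixing every lc center. Making it uniform requires genuine input from the boundedness of the underlying log Calabi--Yau pairs, effectively an index-type bound ensuring that the relevant cyclic covers, residues, and dual complexes behave in a controlled way. In dimension $2$ this input is supplied by the classification of log canonical surfaces and by the fact that their dual complexes have dimension at most $1$, which is what brings the conjecture within reach; in higher dimension the same scheme reduces the statement to the (still open) boundedness and index conjectures for log Calabi--Yau pairs, which is the principal barrier to the general case.
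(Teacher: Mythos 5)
The statement you are addressing is a conjecture; the paper establishes it only for $d=2$ (Theorem 1.4), and your proposal likewise defers the general case to open boundedness and index conjectures, so I will judge it as an attempted proof of the surface case together with a strategy for induction. Your reduction to $k=1$ via $\rho_{km}(g)=\rho_m(g)^k$ is correct and is exactly Remark 1.2, and your klt/non-klt dichotomy matches the paper's. But both halves of the dimension-$2$ argument have genuine gaps.

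In the klt case, the index-one cover only produces an honest Calabi--Yau surface (where the classification, $b_2\le 22$, and Nikulin-type finiteness apply) when $B=0$ and $S$ has du Val singularities. When $B\neq 0$ the $m$-fold cover is branched along $\Supp\{B\}$ and yields another klt log Calabi--Yau \emph{pair}; the bound coming from the covering trick (Lemma 2.12) is in terms of the $d$-th Betti number of a log resolution of the cover, which is not a priori bounded. The paper must invoke Alexeev's boundedness of log surfaces, place the pairs in a log bounded family, and use deformation invariance of log plurigenera (HMX18) to obtain a uniform index and a uniform Betti number (Theorem 3.2); your sketch contains no substitute for this input and misattributes the need for boundedness to the strictly lc case only. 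In the non-klt case, the claim that the ``transverse'' part of the representation is governed by the action on the top homology of the dual complex and therefore has order at most $2$ ``even when the complex is combinatorially large'' is unsubstantiated and is exactly the crux: the induced map on $H_1$ of a circle says nothing directly about the scalar $\lambda$ with $g^*s=\lambda s$ (compare Remark 1.3, where a rotation of a cycle of rational curves acts trivially on the homology of the dual complex but by a root of unity of unbounded order on $H^0$). The paper's actual mechanism is different: it first passes to a crepant dlt model on which the cycle part $T_0$ of $\lfloor B\rfloor$ has at most $6$ irreducible components (Lemmas 3.7 and 3.11) --- this is where boundedness enters the non-klt case --- and then uses injectivity of the restriction map to $T_e$ or $T_0$ together with the dimension-$1$ bound on each of the at most $12$ lc centers to conclude $(g^*)^{h}=\mathrm{id}$ with $h$ controlled by $12!$ and the dimension-$1$ constant. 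Without a uniform bound on the number of lc centers of a suitable crepant model, your scheme does not close even in dimension $2$.
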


\begin{rem}\label{remark1}
If $(X, \Delta)$ is a projective connected
lc pair of dimension $d$ such that $m(K_X+\Delta)\sim 0$, then $H^0(X, km(K_X+\Delta))\simeq \mathbb{C}$ for any positive integer $k$. 
Hence $\rho_{km}(g)\in \mathbb{C}^*$ and $\rho_{km}(g)=\rho_{m}(g)^k$ for any $g\in \Bir(X, \Delta)$. 
So $$|\rho_{km}(\Bir(X,\Delta))|\leq |\rho_{m}(\Bir(X,\Delta))|\leq k |\rho_{km}(\Bir(X,\Delta))|.$$
Therefore, in Conjecture \ref{main-conj}, it suffices to consider the case $k=1$.
\end{rem}

\begin{rem}\label{rem-conj-1}
In Conjecture \ref{main-conj}, the {assumptions} that $(X, \Delta)$ is connected and lc are necessary.
In fact, it is easy to see that $|\rho_m(\Bir(X,\Delta))|$ could not be bounded uniformly unless the number of irreducible components of $X$ is bounded. For example, if
$X$ is a cycle of smooth rational curves or a disjoint union of several copies of an elliptic {curve}, then $K_X\sim 0$ but $|\rho_1(\Bir(X,0))|$ depends on the number of irreducible components of $X$ (a rotation of irreducible components of $X$ gives 
a B-pluricanonical representation). 
\end{rem}

Conjecture \ref{main-conj} can be easily proved in dimension $1$, see \cite[Theorem 3.3]{fujino-ab}, \cite[Page 18]{xu1}, or \cite[Proposition 8.4]{fmx19}. But it was still open even in dimension 2. As the main result of this paper, we give an affirmative answer to Conjecture \ref{main-conj} in dimension 2.
 
\begin{thm}[{=Theorem \ref{thm-klt} + Theorem \ref{thm-dlt}}]\label{main-thm}
Let $m$ be a positive integer. 
Then there exists a positive integer $N$ depending only on $m$ satisfying the following property:
if $(S, B)$ is a projective connected
lc pair of dimension $2$ such that $m(K_S+B)\sim 0$, then
$|\rho_{km}(\Bir(S,B))|\leq N$ for any positive integer $k$. 
\end{thm}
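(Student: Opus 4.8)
The plan is to prove Theorem \ref{main-thm} by reducing the general lc case to the situation where we already have control, and then handling the core geometric cases separately. First I would invoke the minimal model program for surface pairs to pass from $(S,B)$ to a birational model on which $K_S+B$ is relatively well-behaved; since $m(K_S+B)\sim 0$, running an MMP and replacing $(S,B)$ by a dlt or even terminal model does not change the group $\Bir(S,B)$ nor the representation $\rho_{km}$, because $\Bir$ and $H^0(K_S+B)$ are birational invariants for pairs with $K_S+B$ of Kodaira dimension zero. This lets me split into the klt case and the strictly lc (non-klt, i.e.\ with nonempty non-klt locus) case, which correspond to the two ingredients Theorem \ref{thm-klt} and Theorem \ref{thm-dlt} advertised in the statement.

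In the klt case the pair $(S,B)$ is a genuine log Calabi--Yau surface, and I would use boundedness of klt log Calabi--Yau surfaces: by results of Alexeev and Alexeev--Mori (cited as \cite{alexeev,alexeev-mori}) together with the fixed index $m$, such pairs form a bounded family, so the automorphism/birational automorphism groups acting on $H^0(S, km(K_S+B))\simeq\mathbb{C}$ can be controlled uniformly. Concretely, since a birational automorphism of a klt Calabi--Yau pair extends to a biregular automorphism of a fixed projective model, $\Bir(S,B)=\Aut(S,B)$ acts on the nowhere-vanishing section of $\mathcal{O}_S(km(K_S+B))$, and boundedness of the family forces the image $\rho_{km}(\Bir(S,B))$ into a finite subgroup of $\mathbb{C}^*$ of order bounded in terms of $m$.

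In the non-klt lc case, which I expect to be the main obstacle, the non-klt locus is nonempty and carries the essential information. Here I would use the theory of B-pluricanonical representations restricted to the non-klt centers: by adjunction, $K_S+B$ restricts to a log Calabi--Yau structure on the one-dimensional non-klt locus (a cycle of rational curves, a union of curves with a dlt structure), and the representation on $H^0(S,km(K_S+B))$ is determined, via the residue/restriction isomorphism, by its action on the sections along this locus. The finiteness of B-pluricanonical representations in dimension $1$, together with control of how a birational automorphism permutes the finitely many non-klt centers, should reduce the problem to a bounded combinatorial action plus a bounded action on each one-dimensional piece. The crux is to make the restriction map equivariant and to bound the order of the induced roots of unity uniformly in $m$; this requires carefully tracking the gluing along the dlt strata and ensuring the permutation of components does not produce unbounded cyclic factors, which is precisely where the connectedness hypothesis and the fixed index $m$ must be used.

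Finally I would combine the two cases: any lc pair decomposes according to whether its non-klt locus is empty, and the bound $N$ is the maximum of the two bounds obtained, times the combinatorial factor coming from the permutation action on non-klt strata. By Remark \ref{remark1} it suffices to bound $|\rho_m(\Bir(S,B))|$, so throughout I may fix $k=1$, which simplifies the equivariance bookkeeping.
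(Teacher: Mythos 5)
Your overall architecture matches the paper's: reduce to a good dlt model, split into the klt and non-klt cases, and in the non-klt case restrict to the one-dimensional lc locus and use the dimension-one statement plus control of the permutation of strata. But both halves of your sketch stop exactly where the real work begins, and in the klt case the step you do assert is not correct as stated. First, it is not true in general that $\Bir(S,B)=\Aut(S,B)$ for a klt log Calabi--Yau pair (a B-birational self-map may contract components of $B$ with coefficient in $(0,1)$), and more importantly ``boundedness of the family forces $\rho_{km}(\Bir(S,B))$ into a finite subgroup of bounded order'' is precisely the statement to be proved, not a consequence of boundedness: a bounded family of klt CY surfaces can have infinite automorphism groups (abelian surfaces), and finiteness of the image in $\mathbb{C}^*$ with a \emph{uniform} bound requires an actual mechanism. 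The paper's mechanism is the index-one cyclic cover together with Ueno's theorem that the eigenvalue is a root of unity which is an algebraic integer whose minimal polynomial has degree at most $b_d$ of a resolution of the cover (Lemma \ref{cover-lem}); to run this uniformly one needs invariance of plurigenera over the bounded family to get a uniform global index $k$, a cyclic cover constructed in the family, and a uniform bound on the Betti numbers (Theorem \ref{bdd family}). None of this is present or replaceable by the bare word ``boundedness.''

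In the non-klt case you correctly identify the crux (equivariance of the restriction map and bounding the roots of unity) but explicitly leave it unresolved, and you miss the one combinatorial input without which the argument fails: when a connected component of $\lfloor B\rfloor$ is a \emph{cycle} of rational curves, the number of its irreducible components -- hence the number of lc centers being permuted -- is a priori unbounded, so your ``bounded combinatorial factor'' does not exist on an arbitrary model. The paper fixes this by first passing to a B-birational model on which the cyclic part $T_0$ of $\lfloor B\rfloor$ has at most $6$ components (Lemma \ref{lem-good dlt model 2}, resting on the classification of minimal surfaces carrying such pairs in Lemma \ref{lem-number}); for the chain components one instead restricts only to the at most $4$ end curves. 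You also need the injectivity of $H^0(S,m(K_S+B))\to H^0(T,m(K_S+B)|_T)$ (from $H^0(S,-T)=0$) and the equivariance statements of Lemmas \ref{lem-induced auto} and \ref{lem-induced b map}; these are the steps your phrase ``residue/restriction isomorphism'' gestures at but does not supply. As written, the proposal is a correct outline of the strategy with the two decisive lemmas missing.
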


In the framework of Fujino \cite{fujino-ab}, it is known that Conjecture \ref{main-conj} is closely related to the following index conjecture for log Calabi--Yau pairs (cf. \cite[Conjecture 1.3]{xu2}):

\begin{conj}[Index conjecture]\label{index conj slc}
Let $I$ be a finite set in $[0,1]\cap \mathbb{Q}$ and $d$ a positive integer. Then there exists 
a positive integer $m$ depending only on $I, d$ satisfying the following property:
if $(X,\Delta)$ is a projective slc pair
of dimension $d$ such that the coefficients of $\Delta$ are in $I$
and $K_X+\Delta\sim_{\mathbb Q} 0$, 
then $ m(K_X+\Delta) \sim 0$.
\end{conj}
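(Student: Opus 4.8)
The plan is to prove this by induction on the dimension $d$, splitting the argument into a reduction from slc to lc, a reduction from non-klt lc to a lower-dimensional slc pair, and a residual klt case. First I would pass from an slc pair $(X,\Delta)$ to its normalization $\nu\colon(X^\nu,\Theta)\to(X,\Delta)$, where $(X^\nu,\Theta)$ is a disjoint union of normal lc pairs of dimension $d$ with $K_{X^\nu}+\Theta\sim_{\mathbb Q}0$, and where $X$ is reconstructed from $X^\nu$ by Koll\'ar's gluing involution $\tau$ along the conductor $(D^\nu,\Theta_D)$, an slc pair of dimension $d-1$. Descent of a generator of $H^0(X^\nu,m(K_{X^\nu}+\Theta))$ to a section on $X$ is governed by the action of $\tau$, so once the coefficients lie in the fixed finite set $I$, a bounded $m$ realizing $m(K_{X^\nu}+\Theta)\sim0$ on the normalization, together with $\tau$-invariance of the chosen generator, yields $m'(K_X+\Delta)\sim0$ for a bounded multiple $m'$. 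The size of the correcting multiple is controlled exactly by the order of the B-pluricanonical representation of the gluing involution on the conductor, hence by Conjecture \ref{main-conj} in dimension $d-1$.

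It then remains to bound the index in the normal lc case, where I would take a dlt modification and separate the strictly lc case from the klt case. In the strictly lc (non-klt) case there is a divisorial log canonical place; after the dlt modification the reduced part $S=\lfloor\Delta_Y\rfloor$ is nonempty, and adjunction $(K_Y+\Delta_Y)|_S=K_S+\Delta_S\sim_{\mathbb Q}0$ produces an slc log Calabi--Yau pair of dimension $d-1$, to which the inductive slc index bound applies to give a bounded $m$ with $m(K_S+\Delta_S)\sim0$. To lift this to $Y$ I would use a restriction isomorphism $H^0(Y,m(K_Y+\Delta_Y))\cong H^0(S,m(K_S+\Delta_S))$ of Fujino type, valid because the non-klt locus is connected and the relevant vanishing holds: a trivializing section on $S$ then extends to a section $s$ on $Y$, and since $m(K_Y+\Delta_Y)$ is torsion while the only effective divisor linearly equivalent to $0$ on a connected variety is $0$, the divisor of $s$ must vanish, so $m(K_Y+\Delta_Y)\sim0$. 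Crucially this lift requires no B-representation input, which is why the argument reaches one dimension higher: in dimension $4$ the center $S$ is a $3$-fold and one invokes the slc index bound in dimension $3$, which in turn uses only Conjecture \ref{main-conj} in dimension $2$, the main theorem of this paper.

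The genuinely hard remaining step, and the reason the conjecture stays open in full generality, is the klt case with no positive-dimensional log canonical center, where no adjunction is available and the torsion order of $K_X+\Delta$ in $\operatorname{Pic}(X)$ must be bounded directly. I expect this to require boundedness of klt log Calabi--Yau pairs with coefficients in $I$ in the sense of the BAB-type theorems, so that the underlying varieties vary in a bounded family and their torsion in $H^2$ is uniformly bounded; such boundedness is available in low dimension but open in general. The other obstacle is the need for Conjecture \ref{main-conj} in higher dimension in the slc-to-lc reduction, which is precisely the open generalization of the two-dimensional result proved here. Granting these two inputs in the appropriate dimension, the inductive scheme is essentially formal, consisting of gluing theory, adjunction, and the restriction-isomorphism argument; the dimension-$2$ B-representation theorem of this paper combined with known klt boundedness in dimension at most $3$ is exactly what yields the slc index bound up to dimension $3$ and the non-klt lc index bound in dimension $4$.
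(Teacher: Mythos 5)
Your overall architecture matches the paper's Section 4 quite closely: the slc-to-lc reduction through the conductor with the correcting multiple controlled by B-pluricanonical representations in dimension $d-1$ is exactly the content of Fujino's pre-admissible/admissible section machinery (Propositions \ref{C to A}, \ref{AB to C}, \ref{prop dlt to slc}), and the residual klt case is indeed handled by quoting known low-dimensional results. One caveat on the first reduction: what descends to $X$ is a \emph{pre-admissible} section, and this requires invariance of the restriction to the conductor under \emph{all} B-birational maps among its components (admissibility), not only under the gluing involution $\tau$; the bounded multiple is still governed by Conjecture \ref{main-conj} in dimension $d-1$, as you say, but the mechanism is the surjectivity of $\PA\to\A$ rather than bare $\tau$-invariance. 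Also, in the klt case in dimension $3$ with $\Delta=0$, boundedness of the underlying varieties is not actually available (canonical Calabi--Yau threefolds are not known to be bounded); the paper instead invokes the gap theorem of Jiang and results of Xu.

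The genuine gap is in your lifting step for the non-klt lc case. The restriction map $H^0(Y,m(K_Y+\Delta_Y))\to H^0(S,m(K_S+\Delta_S))$ with $S=\lfloor\Delta_Y\rfloor$ is in general neither an isomorphism nor surjective: by the connectedness lemma $S$ may have two connected components (e.g.\ $Y=E\times\mathbb P^1$, $\Delta_Y=E\times\{0\}+E\times\{\infty\}$), so the target can be $2$-dimensional while the source is at most $1$-dimensional. The correct surjectivity statement of Fujino type surjects only onto the \emph{admissible} sections on $S$, and producing a nonzero admissible section is precisely where the B-representation input in dimension $d-1$ re-enters --- contradicting your claim that this lift ``requires no B-representation input,'' which is the whole point of reaching dimension $4$ in Corollary \ref{ind-fourfold}. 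The paper circumvents this entirely with a different and more elementary argument (Proposition \ref{prop slc to nonklt}, following Prokhorov--Shokurov): once $m(K_Y+\Delta_Y)|_S\sim 0$ for a bounded $m$, take the index-one cover of $m(K_Y+\Delta_Y)$; the preimage of $S$ has at least $r$ connected components, where $r$ is the index of $m(K_Y+\Delta_Y)$, so Lemma \ref{lem-num-1} forces $r\leq 2$ and hence $2m(K_Y+\Delta_Y)\sim 0$. That argument uses no vanishing theorem and no B-representations, and is what actually buys the extra dimension; your version, as written, does not close.
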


Recently this conjecture was studied by the first author \cite{jiang} and Xu \cite{xu1, xu2}. It was proved in dimension 2 \cite{xu1}, and in dimension 3 for lc pairs \cite{jiang, xu2}. 
As applications of Theorem \ref{main-thm}, we prove the boundedness of indices of slc log Calabi--Yau pairs up to dimension $3$.

\begin{cor}\label{ind-threefold}
Conjecture \ref{index conj slc} holds in dimension $\leq 3$. To be more precise, let $I$ be a finite set in $[0,1]\cap \mathbb{Q}$. Then there exists 
a positive integer $m$ depending only on $I$ satisfying the following property:
if $(X,\Delta)$ is a projective slc pair
of dimension at most $3$ such that the coefficients of $\Delta$ are in $I$
and $K_X+\Delta\sim_{\mathbb Q} 0$, 
then $ m(K_X+\Delta) \sim 0$.
\end{cor}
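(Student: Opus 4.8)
The plan is to prove Corollary \ref{ind-threefold} by combining Theorem \ref{main-thm} (boundedness of B-pluricanonical representations in dimension 2) with the known cases of the index conjecture, using the standard reduction from the slc setting to the lc (normalization) setting via Fujino's framework. The key conceptual point is that for a projective slc pair $(X,\Delta)$ with $K_X+\Delta\sim_{\mathbb Q}0$, the normalization decomposes into irreducible lc components glued along their conductor divisors, and the index of the slc pair is governed by (a) the indices of the normalized lc components and (b) the finite monodromy/gluing data permuting and identifying the pieces, which is exactly what the B-pluricanonical representation controls.

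First I would handle the lc cases directly. In dimension $\le 2$ the index conjecture for lc pairs is known by Xu \cite{xu1}, and in dimension $3$ it is known for lc pairs by \cite{jiang, xu2}; so the content of the corollary beyond the literature is the passage from lc to \emph{slc} in dimensions $2$ and $3$. Thus the real work is: given the bounded index on each normalized component, bound the index needed on the glued slc pair. Here I would pass to the normalization $\nu\colon (\bar X,\bar\Delta)\to(X,\Delta)$, where $\bar\Delta$ is the sum of $\nu^{-1}_*\Delta$ and the conductor; each connected lc log Calabi--Yau component has index dividing some $m_0$ depending only on $I$ and the dimension (by the lc cases just cited). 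The global sections of $m_0(K_X+\Delta)$ are then computed from the $\nu$-pullback together with the gluing condition: a section descends from $\bar X$ to $X$ precisely when it is compatible across the conductor under the gluing involution, and $m(K_X+\Delta)\sim 0$ holds once $m$ kills both the index on each piece and the order of the relevant representation acting on the one-dimensional spaces $H^0(\bar X_i, m_0(K_{\bar X_i}+\bar\Delta_i))\simeq\mathbb C$.

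The mechanism that makes the gluing order \emph{uniformly} bounded is Theorem \ref{main-thm}: in dimension $3$ the conductor-plus-boundary data on each surface component is an lc log Calabi--Yau surface pair, and the gluing is encoded by elements of $\Bir$ of these surface pairs acting on the pluricanonical sections; boundedness of $|\rho_{km}(\Bir)|$ in dimension $2$ bounds the order of these finite cyclic actions independently of the particular pair. For the dimension-$2$ slc case the analogous input is the dimension-$1$ statement, which is elementary. Concretely, I would argue that $m(K_X+\Delta)\sim0$ for $m$ a bounded multiple of $m_0$ and the (bounded) lcm of the representation orders, by checking that the unique-up-to-scalar section on the normalization is gluing-compatible after raising to this bounded power.

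The main obstacle I anticipate is bookkeeping the gluing/descent carefully: making sure that ``compatible across the conductor'' is correctly expressed as invariance under a finite group built from the B-pluricanonical representations of the components \emph{and} the permutation of components, and that the resulting order is bounded purely in terms of $I$ and the dimension. One must ensure the coefficients appearing in $\bar\Delta$ (including conductor coefficients, which equal $1$) still lie in a finite set controlled by $I$ and the dimension, so that the lc index bound $m_0$ applies; this uses that the number of components and their combinatorial gluing is itself bounded, for which boundedness of the representation in Theorem \ref{main-thm} is exactly the needed uniformity. Once descent is phrased as a finiteness statement about a bounded-order representation, the conclusion $m(K_X+\Delta)\sim0$ follows formally, so I expect the substance to lie entirely in setting up this reduction rather than in any hard geometry.
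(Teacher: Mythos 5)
Your strategy is the same as the paper's: reduce to the known lc/dlt index results in dimension $3$ (\cite{jiang, xu2}) and use Theorem \ref{main-thm} to control the descent of sections from the normalization back to the slc pair. The paper packages this descent through Fujino's machinery of pre-admissible and admissible sections (Conjectures A, B$'$, C and Propositions \ref{C to A}, \ref{AB to C}, \ref{prop dlt to slc}), which is precisely the ``bookkeeping'' you defer.

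However, one step you describe as formal is not. You propose to take the (unique up to scalar) section on each normalized component, raise it to a bounded power, and check that its restriction to the conductor is gluing-compatible. The obstruction is that admissibility on the conductor requires compatibility under \emph{all} B-birational maps between its irreducible components --- including maps not induced from the ambient $3$-folds --- and achieving this in general requires rescaling the section independently on different conductor components, which is incompatible with its being the restriction of a single global section. Fujino's framework resolves this in the opposite order: one first constructs an admissible section on the boundary (Conjecture C$_2$, which is where Theorem \ref{main-thm} enters) and then \emph{extends} it to a pre-admissible section on the ambient space via the surjectivity of $\PA(X, Mm(K_X+\Delta)) \to \A(\lfloor\Delta\rfloor, Mm(K_X+\Delta)|_{\lfloor\Delta\rfloor})$ (\cite[Proposition 4.5]{fujino-ab}), which rests on a vanishing theorem and the parity trick ($Mm$ even), not on representation theory. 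Without this extension step your descent argument does not close. A smaller point: you assert that the number of irreducible components and their combinatorial gluing are bounded; they are not (cf.\ Remark \ref{rem-conj-1}), but this is also unnecessary --- the cocycle argument in Proposition \ref{AB to C} works for arbitrarily many components once the self-representations are killed.
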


\begin{cor}\label{ind-fourfold}
Conjecture \ref{index conj slc} holds for connected non-klt lc pairs in dimension $4$. To be more precise, let $I$ be a finite set in $[0,1]\cap \mathbb{Q}$. Then there exists 
a positive integer $m$ depending only on $I$ satisfying the following property:
if $(X,\Delta)$ is a projective connected non-klt lc pair
of dimension $4$ such that the coefficients of $\Delta$ are in $I$
and $K_X+\Delta\sim_{\mathbb Q} 0$, 
then $ m(K_X+\Delta) \sim 0$.
\end{cor}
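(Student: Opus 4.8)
The plan is to reduce the four-dimensional statement to the three-dimensional index conjecture, Corollary~\ref{ind-threefold}, by restricting to the non-klt locus; the non-klt hypothesis is exactly what guarantees that this locus is a divisor. First I would pass to a $\mathbb{Q}$-factorial dlt modification $f\colon (Y,\Gamma)\to (X,\Delta)$, so that $(Y,\Gamma)$ is dlt, $Y$ is connected, $K_Y+\Gamma=f^{*}(K_X+\Delta)\sim_{\mathbb{Q}}0$, and the coefficients of $\Gamma$ lie in the finite set $I\cup\{1\}$. Since $f_{*}\mathcal{O}_Y=\mathcal{O}_X$, the projection formula gives that $m(K_X+\Delta)\sim 0$ if and only if $m(K_Y+\Gamma)\sim 0$, so it suffices to bound the index of $(Y,\Gamma)$. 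As $(X,\Delta)$ is non-klt, $S:=\lfloor\Gamma\rfloor$ is a nonzero reduced divisor.

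Next I would run adjunction along $S$. Writing $(K_Y+\Gamma)|_{S}=K_S+\Gamma_S$ for the different, $(S,\Gamma_S)$ is a projective slc pair of dimension $3$ with $K_S+\Gamma_S\sim_{\mathbb{Q}}0$, and the coefficients of $\Gamma_S$ lie in a finite set depending only on $I$ (the different of a boundary with coefficients in $I\cup\{1\}$ has coefficients in a hyperstandard set determined by $I$). Applying Corollary~\ref{ind-threefold} to $(S,\Gamma_S)$ --- componentwise if $S$ is disconnected, the bound being uniform --- produces a positive integer $m_0$, depending only on $I$, with $m_0(K_S+\Gamma_S)\sim 0$. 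Equivalently, the torsion line bundle $\mathcal{L}:=\mathcal{O}_Y(m_0(K_Y+\Gamma))$ satisfies $\mathcal{L}|_{S}\cong\mathcal{O}_S$. Note that all of the boundedness of B-pluricanonical representations enters only here, through Corollary~\ref{ind-threefold} and hence ultimately through Theorem~\ref{main-thm} in dimension $2$ via the surface double locus of $S$; no three-dimensional representation statement is required.

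The remaining, and main, step is to lift this triviality from the divisor $S$ to the whole of $Y$. Setting $D=m_0(K_Y+\Gamma)$, a Cartier divisor with $D\sim_{\mathbb{Q}}0$ and $D|_{S}\sim 0$, I would use the exact sequence
\[
0\longrightarrow \mathcal{O}_Y(D-S)\longrightarrow \mathcal{O}_Y(D)\longrightarrow \mathcal{O}_S\longrightarrow 0 .
\]
Because $K_Y+\Gamma\sim_{\mathbb{Q}}0$ one has $D-S\sim_{\mathbb{Q}}K_Y+\{\Gamma\}+N$ with $N\sim_{\mathbb{Q}}0$; I would invoke Fujino's injectivity theorem --- the Hodge-theoretic input underlying the whole theory of B-pluricanonical representations --- to show that the connecting homomorphism $H^{0}(S,\mathcal{O}_S)\to H^{1}(Y,\mathcal{O}_Y(D-S))$ kills the trivializing section. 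That section then lifts to a nonzero $s\in H^{0}(Y,\mathcal{O}_Y(D))$; since $\mathcal{O}_Y(D)$ is a torsion line bundle on the connected normal projective variety $Y$, a nonzero global section forces $\mathcal{O}_Y(D)\cong\mathcal{O}_Y$, that is $m_0(K_Y+\Gamma)\sim 0$, whence $m_0(K_X+\Delta)\sim 0$ with $m_0$ depending only on $I$.

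I expect this last lifting to be the principal obstacle. When the twist is big one may appeal directly to Kawamata--Viehweg vanishing, but here $N\sim_{\mathbb{Q}}0$ is only numerically trivial, so genuine vanishing of $H^{1}(Y,\mathcal{O}_Y(D-S))$ is unavailable and one is forced to argue, via Fujino's injectivity theorem, that the specific trivializing class is unobstructed rather than that the whole group vanishes. This is precisely the point at which the connectedness hypothesis is convenient and the non-klt hypothesis is essential: non-klt supplies the divisor $S$ on which Corollary~\ref{ind-threefold} can be applied, and without it one would be thrown back onto the still-open four-dimensional index conjecture for klt pairs.
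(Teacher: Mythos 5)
Your first two paragraphs --- dlt modification, adjunction to $S=\lfloor\Gamma\rfloor$, Global ACC to put the coefficients of the different in a finite set depending only on $I$, and Corollary \ref{ind-threefold} to get $m_0(K_Y+\Gamma)|_{S}\sim 0$ --- reproduce exactly the first half of the paper's Proposition \ref{prop slc to nonklt}. The gap is in your final lifting step, and it is genuine. The injectivity you invoke, namely that $H^{1}(Y,\mathcal{O}_Y(D-S))\to H^{1}(Y,\mathcal{O}_Y(D))$ is injective (equivalently, that the connecting map annihilates a trivializing section of $\mathcal{O}_S$), is simply false in this setting: Fujino's injectivity theorem requires the effective divisor being added (here $S$) to lie in the support of a member of a multiple of the semiample part of $D-S\sim_{\mathbb{Q}}K_Y+\{\Gamma\}+N$, and since $N\sim_{\mathbb{Q}}0$ that support is empty, while $S=\lfloor\Gamma\rfloor$ is certainly not contained in $\Supp\{\Gamma\}$. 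Concretely, take $Y=E\times\mathbb{P}^1$ with $E$ elliptic and $\Gamma=E\times\{0\}+E\times\{\infty\}$, so $K_Y+\Gamma\sim 0$, $D\sim 0$ and $S=E_0\sqcup E_\infty$: then $H^{0}(Y,\mathcal{O}_Y(D))=\mathbb{C}$ maps onto only the diagonal of $H^{0}(S,\mathcal{O}_S)=\mathbb{C}^2$, the connecting map to $H^{1}(Y,\mathcal{O}_Y(D-S))\cong\mathbb{C}$ is surjective, and $H^{1}(Y,\mathcal{O}_Y(D-S))\to H^{1}(Y,\mathcal{O}_Y)$ is the zero map. Since by Lemma \ref{lem-num-1} your $S$ may have two connected components, an arbitrary trivialization of $\mathcal{L}|_S$ need not lift; one must produce a compatible (``admissible'') one, which is precisely Fujino's $\PA/\A$ machinery and would require Conjecture A in dimension $4$, hence Conjecture B$'$ in dimension $3$ --- not available in this paper. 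Even when $S$ is connected, ``the chosen section is unobstructed'' is equivalent to the conclusion you want, so the injectivity theorem must carry the entire weight, and it does not apply. (A smaller point: $m_0(K_Y+\Gamma)$ is only a $\mathbb{Q}$-Cartier Weil divisor on the dlt model, not Cartier.)

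The paper closes this gap by a much softer device (Proposition \ref{prop slc to nonklt}, following Prokhorov--Shokurov): let $r$ be minimal with $rm_0(K_X+\Delta)\sim 0$ and take the index one cover $\pi\colon \widetilde{X}\to X$ of the torsion Weil divisor $m_0(K_X+\Delta)$. Because $m_0(K_X+\Delta)|_{\lfloor\Delta\rfloor}\sim 0$, the cover splits completely over $\lfloor\Delta\rfloor$, so $\lfloor\Delta_{\widetilde{X}}\rfloor=\pi^{-1}\lfloor\Delta\rfloor$ has at least $r$ connected components; as $(\widetilde{X},\Delta_{\widetilde{X}})$ is again a connected lc log Calabi--Yau pair, the connectedness lemma forces $r\leq 2$, hence $2m_0(K_X+\Delta)\sim 0$. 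No vanishing or injectivity theorem is needed, and this is exactly where the non-klt and connectedness hypotheses are used. I would replace your third paragraph by this covering argument.
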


\begin{ack}
The authors would like to thank Osamu Fujino and Jingjun Han for helpful discussions. 
The idea of this work was carried out during the conference ``2019 Suzhou AG Young Forum" at Soochow University and the authors are grateful to Yi Gu and Cheng Gong for hospitality and support.
The first author was supported by Start-up Grant No. SXH1414010.
\end{ack}

\section{Preliminaries}\label{pre}

\subsection{Notation and conventions}
We work over the complex number field $\mathbb C$ throughout 
this paper. We freely use the basic 
notation of the minimal model program in 
\cite{kollar-mori}.
In this paper, we consider only $\mathbb Q$-divisors instead of $\mathbb R$-divisors. 
A {\em scheme} is always assumed to be separated and of finite type over $\mathbb{C}$.
The {\em dimension} of a scheme is the pure dimension of that scheme, that is, when we consider the dimension of a scheme $X$, $X$ is always assumed to be of pure dimension.
 A {\em variety} is a reduced and irreducible scheme. A {\em curve} (resp. {\em surface}) is a variety of dimension 1 (resp. 2).
A normal scheme consists of the disjoint union of irreducible normal schemes.

Let $D$ be a $\mathbb Q$-divisor on a normal scheme $X$, that is, 
$D$ is a finite formal sum $\sum _i d_i D_i$ where 
$d_i\in \mathbb{Q}$ and $\{D_i\}_i$ are distinct prime divisors 
on $X$. 
We put 
$$
D^{<1}=\sum _{d_i<1}d_iD_i, \quad 
D^{\leq 1}=\sum _{d_i\leq 1}d_i D_i, \quad 
\text{and} \quad D^{=1}=\sum _{d_i=1}D_i. 
$$
We also put
$
\lfloor D\rfloor =\sum _i \lfloor d_i \rfloor D_i$
where $\lfloor d_i\rfloor$ is the integer defined by 
 {$d_i-1<\lfloor d_i\rfloor \leq d_i$},
and put $\{D\}=D-\lfloor D\rfloor$.

\subsection{Singularities of pairs}
A {\em sub-pair} $(X, \Delta)$ consists of a normal 
scheme $X$ and a $\mathbb Q$-divisor 
$\Delta$ on $X$ such that $K_X+\Delta$ is $\mathbb Q$-Cartier. 
Let $f:Y\to X$ be a projective 
birational morphism from a normal scheme $Y$. 
Then we can write 
$$
K_Y=f^*(K_X+\Delta)+\sum _E a(E, X, \Delta)E
$$ 
with 
$$f_*\left(\underset{E}\sum a(E, X, \Delta)E\right)=-\Delta, 
$$ 
where $E$ runs over prime divisors on $Y$. 
We call $a(E, X, \Delta)$ the {\em{discrepancy}} of $E$ with 
respect to $(X, \Delta)$. 
Note that we can define the discrepancy $a(E, X, \Delta)$ for 
any prime divisor $E$ over $X$ by taking a suitable 
resolution of singularities of $X$. 
If $a(E, X, \Delta)\geq -1$ (resp. $ a(E, X, \Delta)>-1$) for 
every prime divisor $E$ over $X$, 
then $(X, \Delta)$ is called {\em{sub log canonical}} ({\em{sub-lc}} for short) 
or {\em{sub Kawamata log terminal}} ({\em{sub-klt}} for short) respectively.

If $\Delta$ is effective, then a sub-pair $(X, \Delta)$ is called a {\em pair}, and
$(X, \Delta)$ is 
called {\em{lc}} (resp. {\em{klt}}) if it is sub-lc (resp. sub-klt). 
A {\em{divisorial log terminal}} ({\em{dlt}} for short)
pair is a limit of klt pairs in the sense of \cite[Proposition 2.43]{kollar-mori}
(see \cite[Definition 2.37 and Proposition 2.40]{kollar-mori} for precise definitions).

Let $(X, \Delta)$ be a sub-lc pair. 
If there exist a projective birational morphism 
$f:Y\to X$ from a normal scheme $Y$ and a prime divisor $E$ on $Y$ 
with $a(E, X, \Delta)=-1$, then $f(E)$ is called an {\em{lc center}} of 
$(X, \Delta)$.

Let $X$ be a reduced scheme of pure dimension which satisfies Serre's $S_2$ condition and 
is normal crossing in codimension one. 
Let $\Delta$ be an effective $\mathbb Q$-divisor 
on $X$ such that no irreducible component of $\Supp \Delta$ is 
contained in the singular locus of $X$ and $K_X+\Delta$ is $\mathbb Q$-Cartier. 
We say that $(X, \Delta)$ is a {\em{semi log canonical}} 
({\em{slc}} for short) pair if 
$(X^\nu, \Delta_{X^\nu})$ is log canonical, 
where $\nu:X^\nu\to X$ is the normalization of 
$X$ and $K_{X^\nu}+\Delta_{X^\nu}=\nu^*(K_X+\Delta)$, 
that is, $\Delta_{X^\nu}$ is the sum of the inverse image of $\Delta$ 
and the conductor of $X$. 
We say that $(X, \Delta)$ is a {\em{semi divisorial log terminal}} 
({\em{sdlt}} for short) pair if 
$(X^\nu, \Delta_{X^\nu})$ is dlt, and every irreducible component of $X$ is normal. 
Note that an sdlt pair is naturally an slc pair.
For more details of slc and sdlt pairs, see \cite{fujino-ab, fujino-slc, kollar, kollar92}. 

\subsection{Log Calabi--Yau pairs}
A pair $(X, \Delta)$ is called a
\emph{log Calabi--Yau pair} if $X$ is projective and $K_X+B\sim_\mathbb{Q} 0$, and the {\it (global) index} of $(X, \Delta)$ is the minimal positive integer $m$ such that $m(K_X+\Delta)\sim 0$.

\subsection{B-birational maps}
We recall basic knowledge on B-birational maps and B-pluricanonical representations introduced by \cite{fujino-ab}.
For more details, see \cite{fujino-ab, fujino-gongyo, gongyo} and the
references therein.

\begin{defn}[{\cite[Definition 1.5]{fujino-ab} or \cite[Definition 2.11]{fujino-gongyo}}]\label{b-bir}
Let $(X,\Delta)$ and $(Y,\Delta_Y)$ be two sub-pairs.
 A proper birational map
$f:(X,\Delta) \dashrightarrow (Y,\Delta_Y)$ is called \emph{B-birational} if there exists 
a common resolution 
$$
\xymatrix{
&W\ar[ld]_{\alpha}\ar[rd]^{\beta}\\ 
(X,\Delta) \ar@{-->}[rr]^{f} & & (Y,\Delta_Y)
}
$$
such that $\alpha^*(K_X+\Delta)=\beta^*(K_Y+\Delta_Y)$.
In this case, we say that $(Y, \Delta_Y)$ is a \emph{B-birational model} or a \emph{crepant model} of $(X, \Delta)$.
Let 
$$
\Bir(X, \Delta)=\{f| f: (X,\Delta)
 \dashrightarrow (X,\Delta) \text{ is B-birational}\}.
$$
Then $\Bir(X, \Delta)$ has a natural group structure under compositions of maps.
\end{defn}

\begin{defn}[{\cite[Definition 3.1]{fujino-ab} or \cite[Definition 2.14]{fujino-gongyo}}]\label{b-rep}
Let $(X, \Delta)$ be a sub-pair. Fix a positive integer $m$ such that $m(K_X+\Delta)$
is Cartier. Then a B-birational map $f: (X,\Delta) \dashrightarrow (X,\Delta)$ 
naturally induces a linear automorphism of $H^0(X, m(K_X+\Delta))$. This gives a 
group homomorphism
$$
\rho_m: \Bir(X, \Delta)\to \Aut_{\mathbb C} (H^0(X, m(K_X+\Delta))).
$$
The homomorphism $\rho_m$ is called a \emph{B-pluricanonical representation} 
or \emph{log pluricanonical representation} for $(X, \Delta)$.
We sometimes denote $\rho_m(g)$ by $g^*$ for $g\in \Bir(X, \Delta)$ if there is no 
danger of confusion.
\end{defn}

\begin{rem}\label{rem bir=bir}
As explained in \cite[Remark 2.12]{fujino-gongyo}, if $f:(X,\Delta) \dashrightarrow (Y,\Delta_Y)$ is a {B-birational} map, then there is a group isomorphism $\Bir(X, \Delta)\simeq \Bir(Y, \Delta_Y)$ given by $g\mapsto f\circ g\circ f^{-1}$. 
\end{rem}

For B-pluricanonical representation, we have the following finiteness theorem proved by Fujino--Gongyo \cite{fujino-gongyo} (later Hacon--Xu \cite{hx16}  gave a different proof for a weaker statement). It is 
a log version of Nakamura--Ueno \cite{nu} 
and Deligne--Ueno's finiteness theorem of pluricanonical representations \cite[Theorem 14.10]{ueno}.

\begin{thm}[{\cite[Theorem 1.1]{fujino-gongyo}}]
Let $(X,\Delta)$ be a projective lc pair. Assume that $m(K_X+\Delta)$
is Cartier and $K_X+\Delta$ is semi-ample. Then
$\rho_m(\Bir(X, \Delta))$ is a finite group.
\end{thm}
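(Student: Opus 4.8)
The plan is to trade the B-birational maps for honest automorphisms of a polarized base by exploiting semi-ampleness, and then to run an induction on $\dim X$ in which the base of the associated fibration is of log general type while the fibres are lower-dimensional log Calabi--Yau pairs. First I would use the hypothesis that $K_X+\Delta$ is semi-ample: after replacing $m$ by a suitable multiple (harmless for the finiteness statement) the linear system $|m(K_X+\Delta)|$ is base-point free and defines a morphism $\pi\colon X\to Z$ onto a normal projective variety with $m(K_X+\Delta)\sim\pi^*A$ for an ample Cartier divisor $A$ on $Z$. Then $H^0(X,m(K_X+\Delta))\cong H^0(Z,A)$, and since $Z$ together with $A$ is canonically determined by the crepant structure, each $g\in\Bir(X,\Delta)$ descends to a biregular automorphism $\bar g$ of $(Z,A)$; consequently $\rho_m$ factors through the action of $\Aut(Z,A)$ on $H^0(Z,A)$, which endows the representation with a natural integral (lattice) structure to be used at the end.

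Next I would distinguish regimes according to $\dim Z$. If $\dim Z=\dim X$, then $K_X+\Delta$ is big and semi-ample and $(Z,\pi_*\Delta)$ is a log canonical model with ample log canonical class; every crepant self-map is forced to be an isomorphism, so $\Bir(X,\Delta)\cong\Aut(Z,\pi_*\Delta)$ is finite by the finiteness of automorphisms of a pair of log general type, and we are done. If $1\le\dim Z<\dim X$, the general fibre $F$ satisfies $(K_X+\Delta)|_F\sim_{\mathbb Q}0$, so $(F,\Delta|_F)$ is a lower-dimensional log Calabi--Yau pair. I would invoke the canonical bundle formula for lc-trivial fibrations to write $A\sim_{\mathbb Q}m(K_Z+B_Z+M_Z)$ with discriminant $B_Z$ and moduli part $M_Z$, so that $(Z,B_Z+M_Z)$ is log canonical with $K_Z+B_Z+M_Z$ ample, again of log general type; since $B_Z$ and the moduli b-divisor are canonically attached to $\pi$, the descended maps $\bar g$ preserve this structure crepantly and the image of $\Bir(X,\Delta)$ in $\Aut(Z)$ lands in a finite subgroup. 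The residual freedom sits in the fibre direction, where $g$ acts on $H^0(F,m(K_X+\Delta)|_F)\cong\mathbb C$ by a scalar; by the induction hypothesis applied to the log Calabi--Yau fibres these scalars form a finite set of roots of unity. Combining the finite base action with the finite fibrewise scalars bounds the eigenvalues of every $\rho_m(g)$.

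The genuinely hard core is the remaining case $\dim Z=0$, that is $K_X+\Delta\sim_{\mathbb Q}0$, where there is no base to push down to and $\rho_m\colon\Bir(X,\Delta)\to\mathbb C^*$ must be shown to have finite image directly. Here I would follow the strategy of Nakamura--Ueno \cite{nu} and Deligne \cite{ueno}: using the integral structure on the cohomology preserved by the crepant action, one shows that each $\rho_m(g)$ is an algebraic integer all of whose conjugates have absolute value $1$, hence a root of unity by a Kronecker-type argument, and that its degree is bounded; a root of unity of bounded degree has bounded order, so the image is finite.

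The main obstacle, as in Fujino--Gongyo \cite{fujino-gongyo}, will be making the canonical bundle formula genuinely equivariant: one must verify that the discriminant divisor and, above all, the moduli b-divisor $M_Z$ are preserved by the descended maps and are b-semiample, so that $(Z,B_Z+M_Z)$ is a well-defined log general type pair on which finiteness of $\Aut$ legitimately applies. The second delicate point is the bookkeeping that promotes the two separate finiteness statements---finite action on the base and finitely many root-of-unity scalars on the fibres---into finiteness of the full image, which is precisely where the integral structure on $H^0(Z,A)$ together with the Kronecker argument does the decisive work and where the $\dim Z=0$ case must be secured as the base of the induction.
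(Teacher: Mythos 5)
The paper does not actually prove this statement---it is quoted as a known result from Fujino--Gongyo \cite{fujino-gongyo}---so your outline can only be measured against the proof there and against the special cases re-proved in this paper (Lemma \ref{cover-lem}, Theorem \ref{thm-dlt}). Your reduction via the semi-ample fibration $\pi\colon X\to Z$ and the two cases $\dim Z=\dim X$ and $1\le\dim Z<\dim X$ are plausible in outline (modulo the equivariance and b-semiampleness issues in the canonical bundle formula, which you rightly flag as nontrivial). The genuine gap is in what you yourself call the core, $\dim Z=0$. The argument you propose there---cyclic covering trick, realize the eigenvalue on $H^d(V,\mathbb Z)\otimes\mathbb C$ of a smooth model $V$ of the cover, then Kronecker---is exactly the Nakamura--Ueno/Deligne argument \cite{nu, ueno}, and it works only when $(X,\Delta)$ is (sub-)klt: one needs the $m$-th root $\omega_V$ of the log pluricanonical form to be $L^{2}$-integrable, hence a genuine holomorphic $d$-form lying in $H^0(V,K_V)\subset H^d(V,\mathbb Z)\otimes\mathbb C$, and this integrability is precisely where klt-ness enters (\cite[Lemmas 3.3 and 3.4]{fujino-gongyo}, \cite[Proposition 16]{kawamata}; compare Lemma \ref{cover-lem} of this paper, which is stated only for sub-klt pairs for exactly this reason). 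When $(X,\Delta)$ has lc centers, $\omega_V$ acquires log poles, there is no unitary action on a polarized integral Hodge structure, and the ``all conjugates have absolute value $1$'' step fails as stated.

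The actual proof handles the non-klt, numerically trivial case by a completely different mechanism: the restriction map $H^0(X,m(K_X+\Delta))\to H^0(\lfloor\Delta\rfloor,m(K_X+\Delta)|_{\lfloor\Delta\rfloor})$ is injective (vanishing of $H^0(X,-\lfloor\Delta\rfloor)$ together with the connectedness lemma, cf.\ Lemma \ref{lem-num-1}), the $\Bir(X,\Delta)$-action descends to the lc centers by crepant adjunction, and one runs an induction on dimension using (pre-)admissible sections---the machinery of \cite{fujino-ab, gongyo, fujino-gongyo} recalled in Section \ref{sec4} and implemented in dimension $2$ in the proof of Theorem \ref{thm-dlt}. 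Your sketch contains no trace of this idea, so the base of your induction is unsupported. Two smaller points: $H^0(Z,A)$ carries no natural integral structure (the lattice in the klt argument lives on $H^d(V,\mathbb Z)$ of the cover, not on the space of sections), and in the intermediate case the finiteness of the image in $\Aut(Z)$ cannot be deduced from preservation of the ample class alone ($\Aut(\mathbb P^n,[\mathcal O(1)])$ is infinite), so the log-general-type structure on $Z$ must genuinely be made equivariant before that step is legitimate.
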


\subsection{Cyclic coverings}We recall the construction of $m$-fold cyclic coverings.

For simplicity, 
we assume that $X$ is a smooth variety, $\Delta$ is a $\mathbb Q$-divisor on $X$ with simple normal crossing support
such that $K_X+\Delta\sim_\mathbb{Q} 0$. Take $m\in \mathbb Z_{> 0}$ to be the minimal one such that $m(K_X+\Delta)\sim 0$. Then
there is an {\em $m$-fold cyclic covering}
corresponding to the effective divisor
$m\{\Delta\}\sim m(-K_{X}-\lfloor\Delta \rfloor)$ given by
$$
\mu: {\tilde{X}}=\Spec\left( \bigoplus_{i=0}^{m-1}\mathcal L^{-i}
(\lfloor i\{\Delta \} \rfloor)\right) \to X
$$ 
where $\mathcal L=\mathcal O_{X}(-K_{X}-\lfloor\Delta \rfloor)$
 (cf. 
\cite[2.3]{kollar}).
There is also an alternative description of above $m$-fold cyclic covering as
$$
\mu: {\tilde{X}}=\Spec\left( \bigoplus_{i=0}^{m-1}\mathcal O_{X}(\lfloor i(K_{X}+\Delta)\rfloor)\right) \to X
$$ 
as in \cite[Section 6]{fujino-slc-trivial} by $m(K_{X}+\Delta) \sim 0$. 
More precisely, fix a non-zero section $\omega\in H^0(X, m(K_{X}+\Delta))$, the $\mathcal O_{X}$-algebra structure of 
$\bigoplus_{i=0}^{m-1}\mathcal O_{X}(\lfloor i(K_{X}+\Delta)\rfloor)$
is given by the natural multiplication
$$
\mathcal O_{X}(\lfloor i(K_{X}+\Delta)\rfloor) \otimes
\mathcal O_{X}(\lfloor j(K_{X}+\Delta)\rfloor)
\to
\mathcal O_{X}(\lfloor (i+j)(K_{X}+\Delta)\rfloor)
$$
if $i+j<m$, and by the multiplication
\begin{align*}
\mathcal O_{X}(\lfloor i(K_{X}+\Delta)\rfloor) \otimes
\mathcal O_{X}(\lfloor j(K_{X}+\Delta)\rfloor) {}&\to \mathcal O_{X}(\lfloor (i+j)(K_{X}+\Delta)\rfloor)\\
 {}&\stackrel{\times \omega^{-1}}\longrightarrow
\mathcal O_{X}(\lfloor (i+j-m)(K_{X}+\Delta)\rfloor)
\end{align*}
if $i+j\geq m$.
Since we have
$$
\mathcal L^{-i}(\lfloor i\{\Delta\} \rfloor)=
\mathcal O_{X}(iK_{X}+i\lfloor\Delta\rfloor+\lfloor i\{\Delta\} \rfloor)
=\mathcal O_{X}(\lfloor i(K_{X}+\Delta)\rfloor),
$$
these two descriptions are indeed isomorphic. 
 Note that ${\tilde{X}}$ is not necessarily smooth, but it is normal and irreducible by the minimality of $m$. Also note that $\mu$ is \'etale outside $\Supp \{\Delta\}$. Since the construction of ${\tilde{X}}$ depends on the choice of $\omega$, usually we denote this covering by $
\mu: {\tilde{X}}_\omega \to X.
$
By the construction, there exists a $\mathbb{Q}$-divisor $\Delta_{{\tilde{X}}_\omega}$ on ${\tilde{X}}_\omega$ such that $K_{{\tilde{X}}_\omega}+\Delta_{{\tilde{X}}_\omega}=\mu^*(K_X+\Delta)$ and $K_{{\tilde{X}}_\omega}+\Delta_{{\tilde{X}}_\omega}\sim 0$. 

In order to consider the lifting of B-birational maps after cyclic coverings, we have the following lemma.

\begin{lem}\label{comm-lem}Keep the above setting.
Let $\alpha: (W, \Delta_W)\to (X,\Delta)$ be a log resolution
such that $m(K_W+\Delta_W)=m\alpha^*(K_{X}+\Delta)\sim 0$.
Fix a non-zero section $\bar\omega \in H^0(W, m(K_W+\Delta_W))$.
Let $\mu: {\tilde{X}}_\omega\to X$ (resp. $\nu: {\tilde{W}}_{\bar\omega}\to W$) be the $m$-fold cyclic covering given by the 
section $\omega$ (resp. $\bar\omega$).
Then there exists a birational morphism $\tilde{\alpha}_{\omega, \bar\omega}:{\tilde{W}}_{\bar \omega} \to {\tilde{X}}_{\omega}$ making the following diagram {commute}:
\begin{equation}\label{eq2.1}
\xymatrix{
{\tilde{W}}_{\bar \omega} \ar[d]_{\nu}\ar[r]^{\tilde{\alpha}_{\omega, \bar\omega}}& {\tilde{X}}_{\omega}\ar[d]^{\mu}\\ 
W \ar[r]_{\alpha} & X.
}
\end{equation}
\end{lem}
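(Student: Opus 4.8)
The plan is to build $\tilde{\alpha}_{\omega,\bar\omega}$ via the universal property of relative $\Spec$, by identifying the two cyclic-cover algebras after pushing forward to $X$. Write $\mathcal{A}_X=\bigoplus_{i=0}^{m-1}\mathcal{O}_X(\lfloor i(K_X+\Delta)\rfloor)$ and $\mathcal{A}_W=\bigoplus_{i=0}^{m-1}\mathcal{O}_W(\lfloor i(K_W+\Delta_W)\rfloor)$, so that $\tilde{X}_\omega=\Spec_X\mathcal{A}_X$ and $\tilde{W}_{\bar\omega}=\Spec_W\mathcal{A}_W$. Since $\nu_*\mathcal{O}_{\tilde{W}_{\bar\omega}}=\mathcal{A}_W$, a morphism $\tilde{W}_{\bar\omega}\to\tilde{X}_\omega$ over $\alpha$ is, by the universal property of relative $\Spec$, the same datum as an $\mathcal{O}_X$-algebra homomorphism
\[
\phi:\mathcal{A}_X\longrightarrow(\alpha\circ\nu)_*\mathcal{O}_{\tilde{W}_{\bar\omega}}=\alpha_*\mathcal{A}_W .
\]
Thus I would first reduce the lemma to producing such a $\phi$; once it is in hand the commutativity $\mu\circ\tilde{\alpha}_{\omega,\bar\omega}=\alpha\circ\nu$ of \eqref{eq2.1} is automatic, being the statement that $\tilde{\alpha}_{\omega,\bar\omega}$ is a morphism over $X$.

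Next I would establish the graded-module identification underlying $\phi$. Because $K_W+\Delta_W=\alpha^*(K_X+\Delta)$ and $X$ is smooth, the round-down $\lfloor i(K_X+\Delta)\rfloor$ is Cartier and
\[
\lfloor i(K_W+\Delta_W)\rfloor=\alpha^*\lfloor i(K_X+\Delta)\rfloor+\big\lfloor\alpha^*\{i(K_X+\Delta)\}\big\rfloor .
\]
The strict transform of any prime divisor in $\Supp\{i(K_X+\Delta)\}$ appears in $\alpha^*\{i(K_X+\Delta)\}$ with coefficient in $[0,1)$, so its round-down $\lfloor\alpha^*\{i(K_X+\Delta)\}\rfloor$ is effective and $\alpha$-exceptional. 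Combining the projection formula for the line bundle $\alpha^*\mathcal{O}_X(\lfloor i(K_X+\Delta)\rfloor)$ with the standard fact that $\alpha_*\mathcal{O}_W(E)=\mathcal{O}_X$ for an effective $\alpha$-exceptional divisor $E$, I get, for each $i$,
\[
\alpha_*\mathcal{O}_W(\lfloor i(K_W+\Delta_W)\rfloor)=\mathcal{O}_X(\lfloor i(K_X+\Delta)\rfloor),
\]
viewed as subsheaves of the function field of $X$. Summing over $i$ yields an isomorphism $\mathcal{A}_X\cong\alpha_*\mathcal{A}_W$ of graded $\mathcal{O}_X$-modules.

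The point requiring the most care — and the main obstacle — is upgrading this module isomorphism to an \emph{algebra} isomorphism, since the two multiplications differ by the correction factors $\omega^{-1}$ and $\bar\omega^{-1}$ used when the degrees add up to at least $m$. The $i=m$ case of the identification above gives $\alpha_*\mathcal{O}_W(m(K_W+\Delta_W))=\mathcal{O}_X(m(K_X+\Delta))$, hence $H^0(W,m(K_W+\Delta_W))=H^0(X,m(K_X+\Delta))$ via $\alpha^*$; as $X$ (and $W$) is projective and connected with $m(K_X+\Delta)\sim0$, this space is one-dimensional, so $\bar\omega=c\,\alpha^*\omega$ for some $c\in\mathbb{C}^*$. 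Rescaling the degree-$i$ summand by $c^{i/m}$ (a fixed choice of $m$-th root) absorbs the factor $c$ and matches the two multiplication rules; equivalently, this rescaling is the canonical isomorphism $\tilde{W}_{\bar\omega}\cong\tilde{W}_{\alpha^*\omega}$ over $W$, so one may simply assume $\bar\omega=\alpha^*\omega$. This produces the $\mathcal{O}_X$-algebra homomorphism $\phi$ and hence the morphism $\tilde{\alpha}_{\omega,\bar\omega}$.

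Finally, I would check birationality: over the open locus $U\subseteq X$ where $\alpha$ is an isomorphism, the two covers agree (both are the $m$-fold cover attached to $\omega|_U$), so $\tilde{\alpha}_{\omega,\bar\omega}$ restricts to an isomorphism there, and therefore is birational. As an alternative, cleaner, formulation of the same argument, one can observe that $\tilde{X}_\omega$ is the normalization of $X$ in the field $K(X)(\sqrt[m]{\omega})$ while $\tilde{W}_{\bar\omega}$ is a normal variety with the same function field dominating $X$ (using $K(W)=K(X)$ and $\bar\omega=\alpha^*\omega$); the universal property of normalization then yields the factorization $\tilde{\alpha}_{\omega,\bar\omega}:\tilde{W}_{\bar\omega}\to\tilde{X}_\omega$ with $\mu\circ\tilde{\alpha}_{\omega,\bar\omega}=\alpha\circ\nu$, which is birational since both sides share the function field $K(X)(\sqrt[m]{\omega})$.
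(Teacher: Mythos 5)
Your proposal is correct and follows essentially the same route as the paper: both reduce to an $\mathcal{O}_X$-algebra isomorphism $\bigoplus_i\mathcal{O}_X(\lfloor i(K_X+\Delta)\rfloor)\simeq\alpha_*\bigoplus_i\mathcal{O}_W(\lfloor i(K_W+\Delta_W)\rfloor)$, obtained by writing $\bar\omega=t\,\alpha^*\omega$ (using one-dimensionality of $H^0$) and twisting the degree-$i$ piece by a fixed $m$-th root of $t$ to the power $i$. The only differences are cosmetic: you prove the pushforward identity via the projection formula where the paper cites Nakayama, and you make the birationality check explicit.
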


\begin{proof}
Since $ H^0(W, m(K_W+\Delta_W)) \simeq \mathbb C$ by assumption, we can 
take a $t\in \mathbb C^{*}$ such that $\bar\omega=t\alpha^*\omega$. Fix a primitive $m$-th root $\sqrt[m]{t}$ of $t$. 
We construct a morphism $\tilde{\alpha}_{\omega, \bar\omega}:{\tilde{W}}_{\bar \omega} \to {\tilde{X}}_{\omega}$ following the construction of
the $m$-fold cyclic covering. 
Note that there exists a natural isomorphism (see \cite[II.2.11]{Nakayama})
$$
\mathcal O_{X}(\lfloor i(K_{X}+\Delta)\rfloor)
\simeq \alpha_* \mathcal O_{W}(\lfloor i(K_W+\Delta_W)\rfloor)
$$
for every $i\geq 0$. 
We can consider the following ``twisted" isomorphism
$$
\mathcal O_{X}(\lfloor i(K_{X}+\Delta)\rfloor)\simeq \alpha_* \mathcal O_{W}(\lfloor i(K_W+\Delta_W)\rfloor)\stackrel{\times \sqrt[m]{t^i}}\longrightarrow \alpha_* \mathcal O_{W}(\lfloor i(K_W+\Delta_W)\rfloor),
$$
then it is easy to check that the induced isomorphism
$$
\bigoplus_{i=0}^{m-1}\mathcal O_{X}(\lfloor i(K_{X}+\Delta)\rfloor)\longrightarrow \bigoplus_{i=0}^{m-1} \alpha_* \mathcal O_{W}(\lfloor i(K_W+\Delta_W)\rfloor)
$$
is compatible with $\mathcal O_{X}$-algebra structures. 
So this isomorphism gives a birational morphism between coverings $\tilde{\alpha}_{\omega, \bar\omega}:{\tilde{W}}_{\bar \omega} \to {\tilde{X}}_{\omega}$.
\end{proof}

Then we can show that B-birational maps lift to cyclic coverings.

\begin{lem}\label{lift-lem}
Let $X$ be a smooth variety, $\Delta$ a $\mathbb Q$-divisor on $X$ with simple normal crossing support
such that $K_X+\Delta\sim_\mathbb{Q} 0$. Take $m\in \mathbb Z_{> 0}$ to be the minimal one such that $m(K_X+\Delta)\sim 0$. Let $\mu: {\tilde{X}}_\omega\to X$ be the $m$-fold cyclic covering given by a 
non-zero section $\omega \in H^0(X, m(K_{X}+\Delta))$.
Then
a B-birational {map} $g: (X,\Delta) \dashrightarrow (X,\Delta) $
can be lifted to a B-birational map 
$g': ({\tilde{X}}_\omega,\Delta_{{\tilde{X}}_\omega}) \dashrightarrow ({\tilde{X}}_\omega,\Delta_{{\tilde{X}}_\omega})$ commuting with $\mu$.
\end{lem}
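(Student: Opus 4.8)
The plan is to build $g'$ by lifting a single crepant resolution of $g$ to the cyclic coverings via two applications of Lemma \ref{comm-lem}. First I unwind the definition of B-birationality: the map $g$ is given by a common resolution with morphisms $\alpha,\beta\colon W\to X$ satisfying $\alpha^*(K_X+\Delta)=\beta^*(K_X+\Delta)$. After replacing $W$ by a higher model I may assume that $W$ is a common log resolution so that, setting $K_W+\Delta_W:=\alpha^*(K_X+\Delta)$, both $\alpha$ and $\beta$ are log resolutions of $(X,\Delta)$ with $m(K_W+\Delta_W)\sim 0$. The crucial point is that the crepant identity $\alpha^*(K_X+\Delta)=\beta^*(K_X+\Delta)$ forces the \emph{very same} boundary $\Delta_W$ to arise from $\beta$ as well; thus $(W,\Delta_W)$ is crepant over $(X,\Delta)$ through both legs and satisfies the hypotheses of Lemma \ref{comm-lem} for each of $\alpha$ and $\beta$.

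Next I fix a single non-zero section $\bar\omega\in H^0(W,m(K_W+\Delta_W))$ and form the one $m$-fold cyclic covering $\nu\colon \tilde{W}_{\bar\omega}\to W$. Because $(W,\Delta_W)$ is crepant over $(X,\Delta)$ through both $\alpha$ and $\beta$ relative to the same section $\omega$ on $X$, Lemma \ref{comm-lem} applies verbatim to each, producing birational morphisms $\tilde\alpha:=\tilde\alpha_{\omega,\bar\omega}$ and $\tilde\beta:=\tilde\beta_{\omega,\bar\omega}$ from the single covering $\tilde{W}_{\bar\omega}$ to the single covering $\tilde{X}_\omega$, each fitting into a commutative square as in \eqref{eq2.1}, i.e. $\mu\circ\tilde\alpha=\alpha\circ\nu$ and $\mu\circ\tilde\beta=\beta\circ\nu$. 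I then define $g':=\tilde\beta\circ\tilde\alpha^{-1}\colon \tilde{X}_\omega\dashrightarrow \tilde{X}_\omega$, which is a proper birational self-map since $\tilde\alpha,\tilde\beta$ are proper birational morphisms onto the common target $\tilde{X}_\omega$.

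It remains to verify the two asserted properties. That $g'$ lifts $g$, i.e. $\mu\circ g'=g\circ\mu$, is a diagram chase: from $\mu\circ\tilde\alpha=\alpha\circ\nu$ one gets $\nu\circ\tilde\alpha^{-1}=\alpha^{-1}\circ\mu$ as rational maps, whence $\mu\circ g'=\mu\circ\tilde\beta\circ\tilde\alpha^{-1}=\beta\circ\nu\circ\tilde\alpha^{-1}=\beta\circ\alpha^{-1}\circ\mu=g\circ\mu$. For B-birationality I use that $K+\Delta$ pulls back compatibly along the coverings: writing $K_{\tilde{W}_{\bar\omega}}+\Delta_{\tilde{W}_{\bar\omega}}=\nu^*(K_W+\Delta_W)$, the chain $\tilde\alpha^*(K_{\tilde{X}_\omega}+\Delta_{\tilde{X}_\omega})=\nu^*\alpha^*(K_X+\Delta)=\nu^*(K_W+\Delta_W)=\nu^*\beta^*(K_X+\Delta)=\tilde\beta^*(K_{\tilde{X}_\omega}+\Delta_{\tilde{X}_\omega})$ exhibits the crepant condition for $g'$ on $\tilde{W}_{\bar\omega}$; should one insist on a smooth common resolution, I pass to a resolution of the normal scheme $\tilde{W}_{\bar\omega}$, which preserves this equality under pullback.

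The step I expect to be the main obstacle — really the crux of the argument — is arranging a single covering $\tilde{W}_{\bar\omega}$ that is crepant over $\tilde{X}_\omega$ through both $\tilde\alpha$ and $\tilde\beta$, so that $g'$ is an honest self-map of $\tilde{X}_\omega$ rather than a map between two a priori different covers. This is exactly where the B-birational hypothesis $\alpha^*(K_X+\Delta)=\beta^*(K_X+\Delta)$ enters: it guarantees that one and the same boundary $\Delta_W$, hence one and the same section $\bar\omega$ and covering $\tilde{W}_{\bar\omega}$, simultaneously matches $\omega$ along both $\alpha$ and $\beta$, allowing Lemma \ref{comm-lem} to feed both legs of the diagram. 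The remaining verifications are formal, the only minor technical care being that $\tilde{W}_{\bar\omega}$ is merely normal, which is handled by the resolution remark above.
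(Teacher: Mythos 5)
Your proof is correct and follows essentially the same route as the paper: fix a single section $\bar\omega$ on a common log resolution $W$, apply Lemma \ref{comm-lem} to both legs $\alpha$ and $\beta$ (which is legitimate precisely because $\alpha^*(K_X+\Delta)=\beta^*(K_X+\Delta)$ yields the same $\Delta_W$ and hence the same covering $\tilde{W}_{\bar\omega}$), and set $g'=\tilde\beta\circ\tilde\alpha^{-1}$, checking crepancy by the pullback chain through $\nu$. The commutativity diagram chase and the remark about resolving $\tilde{W}_{\bar\omega}$ are harmless additions; nothing essential differs.
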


\begin{proof}
Consider a common log resolution
$$
\xymatrix{
&(W, \Delta_W)\ar[ld]_{\alpha}\ar[rd]^{\beta}\\ 
(X,\Delta) \ar@{-->}[rr]^{g} & & (X,\Delta).
}
$$
Fix a non-zero section $\bar\omega \in H^0(W, m(K_W+\Delta_W))$.
Then by Lemma \ref{comm-lem}, we 
have the following commutative diagram
\begin{equation*}
\xymatrix{&&{\tilde{W}}_{\bar \omega} \ar[dll]_-{ \tilde{\alpha}_{\omega, \bar\omega} }
\ar[dd]^(.60){\nu}\ar[drr]^-{\tilde{\beta}_{\omega, \bar\omega}}&& \\ 
 {\tilde{X}}_\omega\ar[dd]_--{\mu} 
&&&& {\tilde{X}}_\omega\ar[dd]^-{\mu} \\
&& W\ar[dll]_-{\alpha}\ar[drr]^-{\beta} && \\ 
X \ar@{-->}[rrrr]_-g &&&& X
}
\end{equation*}
Note that $$ \tilde{\alpha}_{\omega, \bar\omega}^*(K_{ {\tilde{X}}_\omega}+\Delta_{ {\tilde{X}}_\omega})=\nu^*\alpha^*(K_X+\Delta)=\nu^*\beta^*(K_X+\Delta)=
 \tilde{\beta}_{\omega, \bar\omega}^*(K_{{\tilde{X}}_\omega}+\Delta_{{\tilde{X}}_\omega}).$$
Hence $ \tilde{\beta}_{\omega, \bar\omega}\circ \tilde{\alpha}_{\omega, \bar\omega}^{-1}: ({\tilde{X}}_\omega,\Delta_{{\tilde{X}}_\omega}) \dashrightarrow ({\tilde{X}}_\omega,\Delta_{{\tilde{X}}_\omega})$ is the required B-birational map.
\end{proof}

The following lemma
is a special case of \cite[Proposition 4.9]{gongyo} 
(see also \cite[Proposition 14.4]{ueno}).
The proof is essentially the same as that in \cite[Proposition 4.9]{gongyo}. Note that in \cite[Proposition 4.9]{gongyo}, cyclic coverings and liftings of B-birational maps are constructed locally and analytically, so here we modify the proof by the algebraic construction of coverings and liftings (Lemma \ref{lift-lem}).

\begin{lem}[{cf.  \cite[Proposition 4.9]{gongyo}, \cite[Remark 3.6]{fujino-gongyo}}]\label{cover-lem}
Let $(X,\Delta)$ be a projective sub-klt pair 
of dimension $d$ such that $X$ is smooth connected, $\Delta$ is with simple normal crossing support, 
and $K_X+\Delta\sim_\mathbb{Q} 0$. Take $m\in \mathbb Z_{> 0}$ to be the minimal one such that $m(K_X+\Delta)\sim 0$. Fix a non-zero section $\omega\in H^0(X, m(K_X+\Delta))$. 
Let $\mu: {\tilde{X}}_\omega\to X$ be the $m$-fold cyclic covering given by the 
section $\omega$. Take $\phi: V \to ({\tilde{X}}_\omega,\Delta_{{\tilde{X}}_\omega})$ to be any log resolution.
Take $N_V$ to be the least common multiple of all positive integers $k$ such that $\varphi(k)\leq b_d(V)$
where $b_d(V)$ is the $d$-th Betti number of $V$ 
and $\varphi$ is the Euler function. 
Then
for any B-birational map $g\in \Bir(X,\Delta)$, $(g^*)^{N_V}$ is the identity map on $H^0(X, m(K_X+\Delta)) \simeq \mathbb C$.
In particular, $|\rho_m(\Bir(X,\Delta))|\leq N_V$.
\end{lem}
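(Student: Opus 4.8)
The plan is to move the one–dimensional representation $\rho_m$ on $(X,\Delta)$ up to the cyclic cover, where the log canonical class becomes genuinely trivial, convert the resulting eigenvalue into a class in the middle cohomology of the projective model $V$, and then bound its order by an integrality argument against the Betti number $b_d(V)$. Concretely, $\rho_m(g)$ acts on $H^0(X,m(K_X+\Delta))\simeq\mathbb C$ as multiplication by a scalar $\lambda\in\mathbb C^*$, and the whole task is to show $\lambda^{N_V}=1$. By Lemma \ref{lift-lem}, $g$ lifts to a B-birational self-map $g'$ of $({\tilde X}_\omega,\Delta_{{\tilde X}_\omega})$ commuting with $\mu$, and $K_{{\tilde X}_\omega}+\Delta_{{\tilde X}_\omega}\sim 0$; thus $g'$ acts on the line $H^0({\tilde X}_\omega,K_{{\tilde X}_\omega}+\Delta_{{\tilde X}_\omega})\simeq\mathbb C$ by a scalar $\theta$. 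Fixing a generator $\eta$ of this line and writing $\mu^*\omega=c\,\eta^{\otimes m}$ (both sides generate the one-dimensional $H^0({\tilde X}_\omega,m\mu^*(K_X+\Delta))$), the identity $g'^*\circ\mu^*=\mu^*\circ g^*$ coming from $\mu\circ g'=g\circ\mu$ gives $\lambda\,\mu^*\omega=\theta^m\,\mu^*\omega$, so $\lambda=\theta^m$. Hence it suffices to prove $\theta^{N_V}=1$.

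Next I would exhibit $\theta$ as an eigenvalue on a genuine holomorphic top-form on $V$. Since $K_{{\tilde X}_\omega}+\Delta_{{\tilde X}_\omega}\sim 0$ is an integral Cartier divisor and $K_{{\tilde X}_\omega}$ is integral, the boundary $\Delta_{{\tilde X}_\omega}$ has integral coefficients; sub-klt-ness forces every coefficient to be $<1$, hence $\le 0$. Viewing the generator $\eta$ as a rational $d$-form, we may normalize it so that its divisor equals $-\Delta_{{\tilde X}_\omega}\ge 0$, so $\eta$ is regular on the smooth locus. Pulling back along the log resolution $\phi\colon V\to{\tilde X}_\omega$, the order of $\phi^*\eta$ along a prime divisor $E$ is the discrepancy $a(E,{\tilde X}_\omega,0)$ plus the (nonnegative) multiplicity of $\phi^*(-\Delta_{{\tilde X}_\omega})$ along $E$; as $({\tilde X}_\omega,0)$ is sub-klt this order is $>-1$, and being an integer it is $\ge 0$. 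Therefore $\phi^*\eta$ is a holomorphic $d$-form with no poles, giving a nonzero class $[\phi^*\eta]\in H^0(V,\Omega^d_V)=H^{d,0}(V)\subseteq H^d(V,\mathbb C)$, on which $g'$ acts by the same scalar $\theta$.

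It remains to bound the order of $\theta$. After resolving the graph of $g'$ and passing to a common model, $g'$ induces an operator on the rational Hodge structure $H^d(V,\mathbb Q)$ preserving the integral lattice $H^d(V,\mathbb Z)$ modulo torsion, so $\theta$ is an eigenvalue of an integral matrix of size $b_d(V)$ and hence an algebraic integer with $[\mathbb Q(\theta):\mathbb Q]\le b_d(V)$. By the Nakamura--Ueno--Deligne argument (\cite[Theorem 14.10]{ueno}) as carried out in \cite[Proposition 4.9]{gongyo}, $\theta$ is a root of unity; if its order is $k$, then the $k$-th cyclotomic polynomial divides the characteristic polynomial, whence $\varphi(k)=[\mathbb Q(\zeta_k):\mathbb Q]\le b_d(V)$. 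By the very definition of $N_V$ as the least common multiple of all $k$ with $\varphi(k)\le b_d(V)$, we conclude $k\mid N_V$, so $\theta^{N_V}=1$ and $\lambda^{N_V}=\theta^{mN_V}=1$. As $g$ was arbitrary, $(g^*)^{N_V}=\operatorname{id}$ on $H^0(X,m(K_X+\Delta))$ and $|\rho_m(\Bir(X,\Delta))|\le N_V$.

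The main obstacle is precisely the final step: promoting the a priori merely B-birational self-map $g'$ to an operator on the integral Hodge structure $H^d(V,\mathbb Z)$ and proving that the eigenvalue $\theta$ is an actual root of unity, not just an algebraic integer of absolute value $1$. This quasi-unipotence/finite-order input is what is imported from the Nakamura--Ueno--Deligne theory, and the entire purpose of Lemmas \ref{comm-lem} and \ref{lift-lem} is to make the $m$-fold cyclic covering and the lifting of $g$ purely algebraic, so that this Hodge-theoretic machinery can be applied uniformly on the projective resolution $V$ rather than only locally and analytically as in \cite[Proposition 4.9]{gongyo}.
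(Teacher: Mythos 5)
Your proposal is correct and follows the same overall strategy as the paper's proof: lift $g$ to the cyclic cover via Lemma \ref{lift-lem}, pass to the log resolution $V$, realize an $m$-th root of the eigenvalue $\lambda$ as the action of the induced birational self-map of $V$ on a nonzero holomorphic $d$-form in $H^0(V,K_V)\subset H^d(V,\mathbb Z)\otimes\mathbb C$, and conclude that this root of unity has order $k$ with $\varphi(k)\le b_d(V)$ by \cite[Proposition 14.4 and Theorem 14.10]{ueno}. The only step you treat genuinely differently is the holomorphy of the $d$-form on $V$: the paper deduces it analytically from the $L^{2/m}$-integrability of $\omega$ (\cite[Lemmas 3.3 and 3.4]{fujino-gongyo}, \cite[Proposition 16]{kawamata}), whereas you argue algebraically that $\mu^*(K_X+\Delta)$ is an integral principal divisor, so $\Delta_{\tilde{X}_\omega}\le 0$ by sub-klt-ness, and the order of $\phi^*\eta$ along each prime divisor $E$ on $V$ is the integer $a(E,\tilde{X}_\omega,\Delta_{\tilde{X}_\omega})>-1$, hence $\ge 0$. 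Both justifications are valid here; your discrepancy computation is more elementary and purely algebraic, while the $L^2$ criterion is the form of the argument that transfers to the more general sub-klt situations of \cite{fujino-gongyo} where one cannot reduce to an integral boundary. Your reduction $\lambda=\theta^m$ via a generator $\eta$ of $H^0(\tilde{X}_\omega,K_{\tilde{X}_\omega}+\Delta_{\tilde{X}_\omega})$ is just a mild rephrasing of the paper's relation $(\omega_V)^m=\phi^*\mu^*\omega$, and the final passage from the birational self-map to an integral matrix on $H^d(V,\mathbb Z)$ is delegated to the same Nakamura--Ueno--Deligne results in both arguments.
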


\begin{proof}
Fix any B-birational map $g\in \Bir(X,\Delta)$.
Suppose that $g^*\omega=\lambda \omega$ for some $\lambda\in \mathbb C^*$. It suffices to show that $\lambda^{N_V}=1$.

We can view $\omega\in H^0(X, m(K_X+\Delta))$ as a non-zero meromorphic $m$-ple $d$-form. Then by the cyclic covering construction, there is a non-zero meromorphic $d$-form
$ \omega_V \in H^0(V, K_V + \Delta_V)$ on $V$
such that 
$$
( \omega_V)^m=\phi^*\mu^* \omega,
$$
where $K_V + \Delta_V=\phi^*(K_{{\tilde{X}}_\omega}+ \Delta_{{\tilde{X}}_\omega})$.
As $(X,\Delta)$ is sub-klt, $( V, \Delta_{ V})$ and $({\tilde{X}}_\omega, \Delta_{{\tilde{X}}_\omega})$
are sub-klt by \cite[Proposition 5.20]{kollar-mori}. By \cite[Lemmas 3.4]{fujino-gongyo}, 
$\omega$ is $L^{2/m}$-integrable.
Hence
 $ \omega_V|_{V\backslash\text{Supp}(\Delta_V)}$ is $L^2$-integrable and $ \omega_V$ is a holomorphic $d$-form by \cite[Proposition 16]{kawamata} or \cite[Lemma 3.3]{fujino-gongyo}. 
That is,
$ \omega_V \in 
H^0(V, K_V)\subset H^d(V, \mathbb Z)\otimes \mathbb{C}$.
By Lemma \ref{lift-lem}, there is a lifting $ g'\in \Bir({\tilde{X}}_\omega,\Delta_{{\tilde{X}}_\omega}) $ which naturally lifts to $ g_V\in \Bir(V, \Delta_V ) $ such that
$$
 g_V^*( \omega_V)^m= g_V^*\phi^*\mu^*(\omega)=
\phi^*\mu^* g^*\omega=\phi^*\mu^*( \lambda \omega)=\lambda( \omega_V)^m.
$$
As $H^0(V, K_V + \Delta_V)\simeq \mathbb{C}$, we can write $g_V^*( \omega_V)=\lambda'\omega_V$ for some $\lambda'\in \mathbb C^*$ with $(\lambda')^m=\lambda$.
By \cite[Proposition 14.4 and Theorem 14.10]{ueno}, we immediately see that $(\lambda')^{N_V}=1$. More precisely, by \cite[Theorem 14.10]{ueno}, $\lambda'$ is a root of unity; by \cite[Proposition 14.4]{ueno}, $\lambda'$ is an algebraic integer and the degree of the minimal polynomial of $\lambda'$ with coefficients in $\mathbb{Q}$ is bounded from above by $b_d(V)$, hence $(\lambda')^{N_V}=1$ by the definition of $N_V$.
Therefore, $(\lambda)^{N_V}=(\lambda')^{mN_V}=1$.
\end{proof}

\subsection{Bounded pairs}\label{sec.bdd}
A collection of projective varieties $ \mathcal{D}$ is
said to be \emph{bounded} if there exists a projective morphism 
$h\colon \mathcal{X}\rightarrow T$
between schemes of finite type such that
each $X\in \mathcal{D}$ is isomorphic to $\mathcal{X}_t$ 
for some closed point $t\in T$ where $\mathcal{X}_t=h^{-1}(t)$.

We say that a collection of projective connected log pairs $\mathcal{D}$ is \emph{log bounded} 
if there is a scheme $\mathcal{X}$, a 
reduced divisor $\mathcal{B}$ on {$\mathcal X$}, and a 
projective morphism $h\colon \mathcal{X}\to T$, where 
$T$ is of finite type and $\mathcal{B}$ does not contain 
any fiber, such that for every $(X,B)\in \mathcal{D}$, 
there is a closed point $t \in T$ and an isomorphism $f \colon \mathcal{X}_t \to X$ such that $\mathcal{B}_t:=\mathcal{B}|_{\mathcal{X}_t}$ 
coincides with the support of $f_*^{-1}B$.

Moreover, if $\mathcal{D}$ is a set of connected log Calabi--Yau pairs, then it is 
said to be {\it log bounded modulo B-birational contractions} if there exists another set $\mathcal{D}'$ of connected log Calabi--Yau pairs which is log bounded, and for each $(X, B)\in \mathcal{D}$, there exists $(X', B')\in \mathcal{D}$ and a B-birational map $g:(X, B)\dashrightarrow (X', B')$ which is a {\em contraction}, that is, $g$ does not extract any divisor.
Here we remark that the concept of log boundedness modulo B-birational contractions is a weaker version of ``log boundedness modulo flops" introduced in \cite{rccy3, jiang}, in which $g$ is assumed to be isomorphic in codimension $1$.

\section{Proof of the main theorem}\label{sec3}
In this section, we prove Theorem \ref{main-thm}. The proof splits into two parts: the klt case and the non-klt case. We will use different methods to treat them.

\subsection{Klt case}

In this subsection, we deal with the klt case.

\begin{thm}\label{thm-klt}
Let $m$ be a positive integer. 
Then there exists a positive integer $N$ depending only on $m$ satisfying the following property:
if $(S, B)$ is a projective connected
klt pair of dimension $2$ such that $m(K_S+B)\sim 0$, then
$|\rho_{km}(\Bir(S,B))|\leq N$ for any positive integer $k$. 
\end{thm}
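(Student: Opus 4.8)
The plan is to reduce to the setting of Lemma \ref{cover-lem} and then bound the invariant $N_V$ appearing there by controlling the topology of a resolution of the cyclic cover.

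First I would carry out the standard reductions. By Remark \ref{remark1} it suffices to bound $|\rho_m(\Bir(S,B))|$, i.e.\ to treat the case $k=1$. I would choose a log resolution $\alpha\colon (X,\Delta)\to (S,B)$ with $K_X+\Delta=\alpha^*(K_S+B)$; since $(S,B)$ is klt, the sub-pair $(X,\Delta)$ is sub-klt with $X$ smooth and connected, $\Delta$ of simple normal crossing support, and $K_X+\Delta\sim_{\mathbb Q}0$. The crepant birational map $\alpha$ identifies $\Bir(S,B)\cong\Bir(X,\Delta)$ compatibly with the pluricanonical representations (Remark \ref{rem bir=bir}), so $|\rho_m(\Bir(S,B))|=|\rho_m(\Bir(X,\Delta))|$. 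Letting $m_0\mid m$ be the minimal index of $(X,\Delta)$, the identity $\rho_m(g)=\rho_{m_0}(g)^{m/m_0}$ on $H^0(X,m(K_X+\Delta))\cong\mathbb C$ (as in Remark \ref{remark1}) shows $|\rho_m(\Bir(X,\Delta))|\le|\rho_{m_0}(\Bir(X,\Delta))|$, so it is enough to bound the latter, and $m_0\le m$.

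Next I would invoke Lemma \ref{cover-lem} for $(X,\Delta)$ with its minimal index $m_0$: for any log resolution $V$ of the $m_0$-fold cyclic cover $\tilde X_\omega$ one has $|\rho_{m_0}(\Bir(X,\Delta))|\le N_V$, where $N_V$ is the least common multiple of all positive integers $k$ with $\varphi(k)\le b_2(V)$ (here $d=2$, so $b_d(V)=b_2(V)$). Since $N_V$ is a nondecreasing function of $b_2(V)$ taking a finite value for each finite $b_2(V)$, the whole theorem reduces to the following assertion: \emph{for each $m$ there is a constant $C(m)$ such that, for every pair under consideration, some resolution $V$ of the cyclic cover satisfies $b_2(V)\le C(m)$.}

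The heart of the matter, and the step I expect to be the main obstacle, is this uniform bound on $b_2(V)$. The key starting observation is that $m(K_S+B)\sim 0$ forces $mB$ to be integral, so the coefficients of $B$ lie in the finite set $\tfrac1m\mathbb Z\cap[0,1)$; thus $(S,B)$ ranges over klt log Calabi--Yau surfaces whose boundary coefficients belong to a fixed finite set. I would then appeal to boundedness of log Calabi--Yau surfaces (in the sense of Section \ref{sec.bdd}, cf.\ the boundedness results cited in the introduction) to conclude that, after replacing $(X,\Delta)$ by a B-birational model if necessary, the pairs $(X,\Delta)$ lie in a log bounded family of smooth pairs. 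Since the cyclic cover $\tilde X_\omega$ is determined by the bounded data $X$, $\lfloor\Delta\rfloor$ and $\{\Delta\}$ together with its degree $m_0\le m$, the covers---and hence suitable simultaneous resolutions $V$ of them---also form a bounded family, so that $b_2(V)$ is bounded above by a constant depending only on $m$. The hard part will be establishing this boundedness: controlling both the base pairs and their cyclic covers uniformly, and arranging a bounded \emph{smooth} model compatibly with the B-birational equivalence used in the reduction (this is exactly why the notion of log boundedness modulo B-birational contractions of Section \ref{sec.bdd} is needed, since we are free to choose $V$ within a B-birational class). Once this is in place, the finiteness of $\{k:\varphi(k)\le C(m)\}$ yields the desired bound $N=N(m)$.
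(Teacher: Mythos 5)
Your overall strategy --- reduce to $k=1$, pass to a log smooth crepant model, take the cyclic cover associated to the minimal index $m_0\mid m$, and apply Lemma \ref{cover-lem} so that everything comes down to a uniform bound on $b_2(V)$ for some log resolution $V$ of the cover --- is exactly the skeleton of the paper's argument, and the reduction itself is correct. The genuine gap is in the step where you ``appeal to boundedness of log Calabi--Yau surfaces'' to bound $b_2(V)$: that boundedness statement is \emph{false} in the case $B=0$ with $S$ having at worst du Val (canonical) singularities. In that case $S$ is (up to the minimal resolution) a K3, Enriques, abelian, or bielliptic surface or a quotient thereof, and these do not form a bounded family --- there is no projective family of finite type containing all projective K3 or abelian surfaces without fixing a polarization. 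The boundedness result the paper actually uses, \cite[Theorem 6.9]{alexeev}, applies only under the hypothesis that $B\neq 0$ or $S$ has singularities strictly worse than du Val, which is precisely why the paper's proof of Theorem \ref{thm-klt} splits into two cases. In the excluded case the paper bounds $b_2$ not by boundedness but by the classification of surfaces: the index-one cover $\tilde S$ is a smooth minimal surface with $K_{\tilde S}\sim 0$, hence $b_2(\tilde S)\leq 22$, and Lemma \ref{cover-lem} applies directly. Your reduction to bounding $b_2(V)$ would still go through in that case, but the tool you propose for establishing the bound does not; you need to add this case with the classification argument.

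A secondary point, which you partly flag as ``the hard part'': even in the bounded case, passing from log boundedness of the pairs to boundedness of the cyclic covers and their resolutions is not automatic, because the minimal index $m_0$ and the covering data can jump in a family. The paper's Theorem \ref{bdd family} handles this by stratifying the parameter space via Noetherian induction and using the deformation invariance of log plurigenera \cite[Corollary 1.4]{HMX18} to make the index constant on each stratum before constructing the covers and simultaneous resolutions in families. Without some such argument, the claim that ``the covers form a bounded family'' is an assertion rather than a proof.
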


\begin{proof}
First we consider the case that $B=0$ and $S$ has
at worst du Val singularities. In this case, the boundedness of $|\rho_m(\Bir(S,B))|$ is well-known to experts (cf. \cite[Proposition 3.6]{fujino-index}). We briefly recall the proof here for the reader's convenience. 
 By Remark \ref{rem bir=bir}, we may assume that $S$ is smooth after taking the minimal resolution. 
 Take $r$ to be the minimal positive integer such that $rK_S\sim 0$. Take $\tilde S \to S$ to be the index 1 cover of $K_S$, that is, the $r$-fold cyclic covering given by a non-zero section 
in $H^0(S, rK_S)$, then $\tilde S$ is a projective smooth surface with $K_{\tilde S}\sim 0$. By the classification theory of surfaces (for example \cite[Chapter VIII]{Be}), $b_2(\tilde S)\leq 22$. Hence by Lemma \ref{cover-lem}, there exists a constant $N_1$ independent of $S$ such that $|\rho_r(\Bir(S,B))|\leq N_1$. In particular, 
$|\rho_{m}(\Bir(S,B))|\leq |\rho_r(\Bir(S,B))|\leq N_1$ as $r$ divides $m$.

From now on, we consider the case that $B\neq 0$ or $S$ has
worst than du Val singularities.
In this case, $S$ belongs to a bounded family by \cite[Theorem 6.9]{alexeev}. 
Moreover, as $m(K_S+B)\sim 0$, $(S,B)$ belongs to a log bounded family by standard arguments (see, for example, \cite[Lemma 2.20]{Bir}).
Then by Theorem \ref{bdd family} below, there exist two positive integers $k_2$ and $N_2$ independent of $(S, B)$ such that 
 $k_2(K_S+B)\sim 0$ and
$|\rho_{k_2}(\Bir(S,B))|\leq N_2$. By Remark \ref{remark1}, $|\rho_{m}(\Bir(X,\Delta))|\leq k_2 |\rho_{k_2m}(\Bir(X,\Delta))|\leq k_2N_2$.

So summarizing two cases, we can just take $N=\max\{N_1, k_2N_2\}$.
\end{proof}

We show the following more general result on boundedness of B-pluricanonical representations in a family which is log bounded modulo B-birational contractions.

\begin{thm}\label{bdd family}
Let $m, d$ be two positive integers.
Let $\mathcal{D}$ be a set of connected klt log Calabi--Yau pairs of dimension $d$ which is log bounded modulo B-birational contractions, such that for each $(X, B)\in \mathcal{D}$, $mB$ is an integral divisor. 
Then there exist two positive integers $k$ and $N$ depending only on $\mathcal{D}$ such that,
if $(X, B)\in \mathcal{D}$, then $k(K_X+B)\sim 0$ and $|\rho_k(\Bir(X,B))|\leq N$.
\end{thm}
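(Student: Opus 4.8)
The plan is to reduce the entire statement to a single application of Lemma \ref{cover-lem} on a smooth crepant model, and then to control the invariant $N_V$ appearing there by boundedness of Betti numbers in an algebraic family.

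First I would exploit the B-birational contraction built into the hypothesis: for each $(X,B)\in\mathcal D$ there is a log bounded set $\mathcal D'$ and a B-birational contraction $g\colon (X,B)\dashrightarrow (X',B')$ with $(X',B')\in\mathcal D'$. Since $K_X+B\sim_{\mathbb Q}0$ and $H^0(X,k(K_X+B))\simeq\mathbb C$, the isomorphism $\Bir(X,B)\simeq\Bir(X',B')$ of Remark \ref{rem bir=bir} identifies the two representations (conjugation in the abelian group $\mathbb C^*$ being trivial), so $|\rho_k(\Bir(X,B))|=|\rho_k(\Bir(X',B'))|$; moreover $k(K_X+B)\sim0$ if and only if $k(K_{X'}+B')\sim0$. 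Because $mB$ is integral and $g$ is crepant and only contracts divisors, the coefficients of $B'$ lie in the finite set $\frac1m\mathbb Z\cap[0,1)$, and sub-kltness is preserved. Thus it suffices to treat the log bounded family $\mathcal D'$ with coefficients in this fixed finite set.

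Next I would pass to smooth models relatively. After stratifying the base $T$ of the bounding family so that the coefficient function on the components of $\mathcal B$ is constant, and after a further stratification producing a simultaneous log resolution $\mathcal W\to\mathcal X'$, each fiber gives a log resolution $\alpha_t\colon W_t\to X'_t$. Setting $K_{W_t}+B_{W_t}=\alpha_t^*(K_{X'_t}+B'_t)$, the pair $(W_t,B_{W_t})$ is sub-klt with simple normal crossing support, $W_t$ is smooth connected, $K_{W_t}+B_{W_t}\sim_{\mathbb Q}0$, and it is B-birational to $(X'_t,B'_t)$; hence $|\rho_k(\Bir(X',B'))|=|\rho_k(\Bir(W_t,B_{W_t}))|$ and the indices agree. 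For the uniform index, the coefficients of $B_{W_t}$ have bounded denominators (those of $B'_t$ together with the discrepancies, whose denominators are controlled by the bounded Cartier index of the family), so some bounded $\ell$ makes $\ell(K_{W_t}+B_{W_t})$ an integral, $\mathbb Q$-linearly trivial, hence torsion, class in $\operatorname{Pic}(W_t)$, whose order is bounded over the finite type base. Taking $k$ to be the least common multiple of all these minimal indices $m_t$ gives $k(K_{W_t}+B_{W_t})\sim0$, equivalently $k(K_X+B)\sim0$.

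Finally I would bound the representation. For each $t$ let $m_t$ be the minimal index of $(W_t,B_{W_t})$; by Remark \ref{remark1}, since $m_t\mid k$, we have $|\rho_k(\Bir(W_t,B_{W_t}))|\le|\rho_{m_t}(\Bir(W_t,B_{W_t}))|$, and Lemma \ref{cover-lem} bounds the right-hand side by $N_{V_t}$, where $V_t$ is a log resolution of the $m_t$-fold cyclic cover of $(W_t,B_{W_t})$ and $N_{V_t}$ depends only on the Betti number $b_d(V_t)$. The key point, and the main obstacle, is to see that $b_d(V_t)$ is bounded independently of $t$: after further stratifying $T$ so that $m_t$ is constant on each stratum, I would form the relative cyclic cover $\tilde{\mathcal W}\to\mathcal W$ (using relative triviality of $m_t(K_{\mathcal W/T}+\mathcal B_{\mathcal W})$ after an \'etale base change and a relative section) and a relative resolution $\mathcal V\to\tilde{\mathcal W}$, obtaining a family $\mathcal V\to T$ of smooth projective $d$-folds of finite type. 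Its fibers admit only finitely many topological types, so $b_d(V_t)$, and hence $N_{V_t}$, is bounded by a constant $N$ depending only on $\mathcal D$. Combining the displays yields $|\rho_k(\Bir(X,B))|\le N$, as desired.
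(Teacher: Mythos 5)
Your overall architecture matches the paper's: reduce to a genuinely log bounded family via the B-birational contraction and Remark \ref{rem bir=bir}, pass to a simultaneous log resolution, and then bound the representation by applying Lemma \ref{cover-lem} to a relative cyclic cover whose resolved total space lies in a bounded family, so that $b_d(V_t)$ and hence $N_{V_t}$ is uniformly bounded. The last step and the first step are essentially the paper's argument.

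The gap is in your justification of the uniform index $k$. You assert that $\ell(K_{W_t}+B_{W_t})$ is a torsion class in $\operatorname{Pic}(W_t)$ ``whose order is bounded over the finite type base.'' That is not a valid principle: for smooth projective fibers, a torsion line bundle lies in $\operatorname{Pic}^\tau$, and while $\operatorname{Pic}^\tau/\operatorname{Pic}^0$ is finite, $\operatorname{Pic}^0$ is a divisible abelian variety containing torsion points of every order (already on a single elliptic curve, degree-zero torsion bundles have unbounded order). So boundedness of the family alone does not bound the order of a numerically trivial class, and the function $t\mapsto m_t$ is not a priori constructible --- which also undercuts your later step of ``stratifying $T$ so that $m_t$ is constant.'' The paper closes exactly this gap with a substantive input you omit: each $(X,B)\in\mathcal D$ is a good minimal model of the corresponding log smooth fiber $(\mathcal W_{i,t},\mathcal E_{i,t})$, so by deformation invariance of plurigenera \cite[Corollary 1.4]{HMX18} the dimensions $h^0(\mathcal X_{i,t},l(K_{\mathcal X_{i,t}}+\mathcal B_{i,t}))$ are constant in $t$ for every admissible $l$; combined with the density of the points coming from $\mathcal D$ and Noetherian induction, this shows $K_{\mathcal X}+\mathcal B\sim_{\mathbb Q}0$ on each stratum and that the index is \emph{constant} on each stratum, which is what produces $k$. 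Some argument of this type (or at least a spreading-out plus Noetherian induction argument establishing constructibility of the index) is needed; without it your proof of both the existence of $k$ and the finiteness of the set $\{m_t\}$ is incomplete.
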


\begin{proof}
Note that if $(X, B)\dashrightarrow (X', B')$ is a B-birational contraction and $(X, B)$ is a connected klt log Calabi--Yau pair, then $(X', B')$ is automatically a connected klt log Calabi--Yau pair and $mB'$ is integral. Moreover,
 $k(K_X+B)\sim 0$ if and only if $k(K_{X'}+B')\sim 0$, and in this case $|\rho_k(\Bir(X,B))|=|\rho_k(\Bir(X',B'))|$ by Remark \ref{rem bir=bir}. So after replacing $\mathcal{D}$ by $\mathcal{D}'$ as in the definition of log boundedness modulo B-birational contractions, we may further assume that $\mathcal{D}$ is a log bounded family of connected klt log Calabi--Yau pairs of dimension $d$.
 Note that by Global ACC \cite[Theorem 1.5]{HMX14} (see \cite[Proof of Proposition 3.1]{hacon-xu}), there exists a constant $\epsilon \in (0,1)$ such that $(X, B)$ is $\epsilon$-lc for any $(X, B)\in \mathcal{D}$.
 
 By the definition of log boundedness, there is a  scheme $\mathcal{X}$ and a projective morphism $h: \mathcal{X}\to T$, a 
reduced divisor $\mathcal{B}'$ on $\mathcal X$, where 
$T$ is of finite type and $\mathcal{B}'$ does not contain 
any fiber, such that for every $(X,B)\in \mathcal{D}$, 
there is a closed point $t \in T$ and an isomorphism
 $f \colon \mathcal{X}_t \to X$ 
such that $\mathcal{B}'_t:=\mathcal{B}'|_{\mathcal{X}_t}$ 
coincides with the support of $f_*^{-1}B$. As the coefficients of $B$ are in a fixed finite set, after replacing $T$ by disjoint union of locally closed subsets, we may assume that there exists a 
$\mathbb Q$-divisor $\mathcal{B}$ on $\mathcal X$ such that for every $(X,B)\in \mathcal{D}$, $(X, B)\simeq (\mathcal{X}, \mathcal{B})|_{\mathcal{X}_t}$ for some fiber ${\mathcal{X}_t}$. Moreover, by applying \cite[Proposition 2.4]{hacon-xu} and the Noetherian induction, we may further assume that $K_{\mathcal X}+\mathcal{B}$ is $\mathbb{Q}$-Cartier, $ ({\mathcal X}, \mathcal{B})$ is klt, and the set of points $t$ corresponding to $(X,B)\in \mathcal{D}$ is dense in $T$.
Replacing $T$ by disjoint union of locally closed subsets $\cup_i T_i$ while taking log resolutions of $\mathcal{X}$, we may assume that there are finitely many smooth varieties $T_i$ and projective morphisms $(\mathcal{W}_i, \mathcal{E}_i)\to (\mathcal{X}_i, \mathcal{B}_i)\to T_i$ such that $(\mathcal{W}_i, \mathcal{E}_i)$ is log smooth over $T_i$ and for every $t\in T_i$, the fiber $\mathcal{X}_{i, t}$ is a normal projective variety of dimension $d$,
 $(\mathcal{W}_{i,t}, \mathcal{E}_{i, t})$ is a log resolution of $(\mathcal{X}_{i, t}, \mathcal{B}_{i,t})$ with $\mathcal{E}_{i, t}$ the sum of strict transform of $\mathcal{B}_{i,t}$ and the
reduced exceptional divisor, and the set of points $t$ corresponding to $(X,B)\in \mathcal{D}$ is dense in each $T_i$. 

Note that if $(X, B)\in \mathcal{D}$ is isomorphic to a fiber $(\mathcal{X}_{i, t_0}, \mathcal{B}_{i,t_0})$ of $(\mathcal{X}_i,\mathcal{B}_i) \to T_i$, then it is a good minimal model of $(\mathcal{W}_{i,t_0}, \mathcal{E}_{i, t_0})$. Hence by \cite[Corollary 1.4]{HMX18}, for each positive integer $l$ such that $l\mathcal{B}_{i,t}$ is integral,
 $$
h^0( \mathcal{X}_{i, t}, l(K_{ \mathcal{X}_{i, t}}+\mathcal{B}_{i,t}))=h^0(\mathcal{W}_{i,t}, l(K_{\mathcal{W}_{i,t}}+\mathcal{E}_{i, t}))
$$
is constant for $t\in T_i$. Since the set of points $t$ corresponding to $(X,B)\in \mathcal{D}$ is dense in each $T_i$, over the generic point $\eta_i\in T_i$,
 $K_{\mathcal{X}_{i, \eta_i}}+ {\mathcal{B}}_{i, \eta_i}\sim_{\mathbb{Q}} 0$. So by the Noetherian induction, further replacing $T$ by disjoint union of locally closed affine subsets (still denoted by $\cup_i T_i$), we may assume that $K_{\mathcal{X}}+ \mathcal{B}\sim_{\mathbb{Q}} 0$.

As there are only finitely many $T_i$, we can reduce to the case that $T=T_i$. Note that by the construction, every fiber $(\mathcal{X}_{t}, \mathcal{B}_{t})$ is a connected klt log Calabi--Yau pair. Recall that by \cite[Corollary 1.4]{HMX18}, for each positive integer $l$ such that $l\mathcal{B}_{t}$ is integral (this condition is independent of $t$),
$
h^0( \mathcal{X}_{t}, l(K_{ \mathcal{X}_{t}}+\mathcal{B}_{t}))$
is constant for $t\in T$, which implies that the index of $(\mathcal{X}_{t}, \mathcal{B}_{t})$ is a constant positive integer, denoted by $k$. In particular, $k$ is also the minimal positive integer such that $k(K_{\mathcal{X}}+ \mathcal{B})\sim 0$, as $T$ is affine.

Consider the log resolution $\psi: ({\mathcal{W}}, \Delta)\to ({\mathcal{X}}, \mathcal{B})$ which is a log resolution on each fiber, where $K_{\mathcal{W}}+ \Delta=\psi^*(K_{\mathcal{X}}+ \mathcal{B})$. Then for a non-zero section $\omega\in H^0( \mathcal{W}, k(K_{\mathcal{W}}+ \Delta))$, we can consider $\mu: {\tilde{\mathcal{W}}}_\omega\to \mathcal{W}$ to be the $k$-fold cyclic covering given by the 
section $\omega$ and take $\phi: \mathcal{V} \to ({\tilde{\mathcal{W}}}_\omega,\Delta_{{\tilde{\mathcal{W}}}_\omega})$ to be a log resolution. 
Here we may assume that on each fiber of $t\in T$, 
 $\phi_t: \mathcal{V}_t
\to ({\tilde{\mathcal{W}}}_{\omega,t},\Delta_{{\tilde{\mathcal{W}}}_{\omega, t}})$ is a log resolution by the Noetherian induction after replacing $T$ by disjoint union of locally closed affine subsets.
Since $\mathcal{V}_t$ is in a bounded family, $b_d(\mathcal{V}_t)$ has a uniform upper bound. Hence by Lemma \ref{cover-lem}, there exists a constant $N$ such that for each $t\in T$, $|\rho_k(\Bir(\mathcal{X}_t, \mathcal{B}_t))|=|\rho_k(\Bir(\mathcal{W}_t, \Delta_t))|\leq N $.
\end{proof}

Besides the proof of Theorem \ref{thm-klt}, Theorem \ref{bdd family} has also the following applications to other known bounded families of log Calabi--Yau pairs.

\begin{cor}
Let $d$ be a positive integer and $I$ a finite set in $[0,1]\cap \mathbb{Q}$.
Then there exist two positive integers $k$ and $N$ depending only on $d, I$ satisfying the following property: if $(X, B) $ is a connected klt log Calabi--Yau pair of dimension $d$, $B$ is big and
 the coefficients of $B$ are in $I$,
then $k(K_X+B)\sim 0$ and $|\rho_k(\Bir(X,B))|\leq N$.
\end{cor}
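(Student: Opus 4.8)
The plan is to derive this corollary as a direct consequence of Theorem~\ref{bdd family}, which reduces everything to producing a single set $\mathcal{D}$ of connected klt log Calabi--Yau pairs that is log bounded modulo B-birational contractions and for which $mB$ is integral for some fixed $m$. Since the coefficients of $B$ lie in the finite set $I$, I would first choose $m$ to be a common denominator of $I$, so that $mB$ is automatically integral for every pair in the family; this handles the integrality hypothesis of Theorem~\ref{bdd family} uniformly. The real content is therefore to verify log boundedness.

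First I would invoke the known boundedness results for klt log Calabi--Yau pairs with big boundary. The collection of connected klt log Calabi--Yau pairs $(X,B)$ of dimension $d$ with $B$ big and $\operatorname{coeff}(B)\subseteq I$ is log bounded: this is exactly the type of statement established in the literature on boundedness of log Calabi--Yau pairs (see the references \cite{Birkar18, rccy3} cited in the introduction, and cf.\ \cite[Lemma 2.20]{Bir} for the standard passage from boundedness of $X$ to log boundedness of $(X,B)$ once $m(K_X+B)\sim_{\mathbb Q}0$ with coefficients in a finite set). The point is that bigness of $B$ together with $K_X+B\sim_{\mathbb Q}0$ forces $-K_X$ to be big, which via the theory of $\epsilon$-lc Fano-type varieties (with $\epsilon$ coming from Global ACC, as in the proof of Theorem~\ref{bdd family}) places $X$ in a bounded family, and then the bound on coefficients upgrades this to log boundedness of $(X,B)$.

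Once log boundedness is in hand, I would take $\mathcal{D}$ to be the set of all such pairs. Log boundedness trivially implies log boundedness modulo B-birational contractions (take $\mathcal{D}'=\mathcal{D}$ and $g=\operatorname{id}$ in the definition), so the hypotheses of Theorem~\ref{bdd family} are met with the integer $m$ chosen above. Applying Theorem~\ref{bdd family} then yields positive integers $k$ and $N$, depending only on $\mathcal{D}$ — hence only on $d$ and $I$ — such that $k(K_X+B)\sim 0$ and $|\rho_k(\Bir(X,B))|\leq N$ for every $(X,B)\in\mathcal{D}$. This is exactly the assertion.

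I expect the main obstacle to be the log boundedness step, i.e.\ establishing that the family is genuinely bounded rather than just log bounded modulo contractions. With $B$ big this is the favorable case and should follow from existing boundedness theorems for Fano-type or Calabi--Yau pairs; the delicate points are extracting a uniform $\epsilon$ so that the relevant Fano-type varieties form a bounded family, and ensuring the coefficient bound passes to log boundedness of the divisor. Everything after that is a formal application of Theorem~\ref{bdd family}, and no further geometric input on $\Bir(X,B)$ or on cyclic coverings is needed beyond what that theorem already packages.
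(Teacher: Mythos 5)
Your proposal is correct and follows essentially the same route as the paper: the paper's proof is a one-line application of the known log boundedness of klt log Calabi--Yau pairs with big boundary and coefficients in $I$ (the precise reference is \cite[Theorem 1.3]{hacon-xu}, which is exactly the Fano-type boundedness statement you describe) combined with Theorem~\ref{bdd family}. The only difference is that you point to \cite{Birkar18, rccy3} instead of \cite{hacon-xu} for the boundedness input, but the content invoked is the same.
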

\begin{proof}
It follows directly from \cite[Theorem 1.3]{hacon-xu} and Theorem {\ref{bdd family}}.
\end{proof}

\begin{cor}
Let $I$ be a finite set in $[0,1]\cap \mathbb{Q}$.
Then there exist two positive integers $k$ and $N$ depending only on $I$ satisfying the following property: if $(X, B) $ is a connected klt log Calabi--Yau pair of dimension $3$ such that $X$ is rationally connected and
 the coefficients of $B$ are in $I$,
then $k(K_X+B)\sim 0$ and $|\rho_k(\Bir(X,B))|\leq N$.
\end{cor}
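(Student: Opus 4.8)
The plan is to deduce this from Theorem \ref{bdd family} in exactly the same spirit as the previous corollary; the only thing to verify is that the collection of pairs in question is log bounded modulo B-birational contractions, after which the conclusion is immediate. So I would let $\mathcal{D}$ denote the set of connected klt log Calabi--Yau pairs $(X,B)$ of dimension $3$ with $X$ rationally connected and the coefficients of $B$ lying in the finite set $I$, and aim to show $\mathcal{D}$ satisfies the hypotheses of Theorem \ref{bdd family}.

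The key input, which I expect to be the genuine content and the main obstacle, is the birational boundedness of rationally connected Calabi--Yau threefolds. Concretely, I would invoke \cite{rccy3}, which shows that a set of pairs such as $\mathcal{D}$ is log bounded modulo flops: there is a log bounded set $\mathcal{D}'$ of connected klt log Calabi--Yau pairs such that every $(X,B)\in\mathcal{D}$ admits a B-birational map $g\colon (X,B)\dashrightarrow (X',B')$ to some $(X',B')\in\mathcal{D}'$ that is isomorphic in codimension $1$. Since a map which is isomorphic in codimension $1$ extracts no divisor, it is in particular a B-birational contraction in the sense of Section \ref{sec.bdd}. Hence log boundedness modulo flops implies log boundedness modulo B-birational contractions, and $\mathcal{D}$ is log bounded modulo B-birational contractions as required.

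Finally, since $I$ is a finite set of rational numbers, I would pick a positive integer $m$ with $m\gamma\in\mathbb{Z}$ for every $\gamma\in I$, so that $mB$ is integral for every $(X,B)\in\mathcal{D}$. Applying Theorem \ref{bdd family} to $\mathcal{D}$ with this $m$ then produces positive integers $k$ and $N$ depending only on $\mathcal{D}$, and hence only on $I$ since the dimension is fixed to be $3$, such that $k(K_X+B)\sim 0$ and $|\rho_k(\Bir(X,B))|\leq N$ for every $(X,B)\in\mathcal{D}$, which is the assertion. The substantive difficulty is entirely packaged into the boundedness statement \cite{rccy3}; once that is granted, the remaining reductions are formal and mirror the proof of the preceding corollary.
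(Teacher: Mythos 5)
Your proposal is correct and follows essentially the same route as the paper: reduce to Theorem \ref{bdd family} by observing that log boundedness modulo flops (from \cite{rccy3}) implies log boundedness modulo B-birational contractions. The only difference is that the paper is slightly more careful with the references, splitting into the cases $B\neq 0$ (handled by \cite[Theorem 4.1]{rccy3}) and $B=0$ (which additionally requires \cite[Corollary 5.5]{rccy3} together with \cite[Theorem 1.3]{jiang}).
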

\begin{proof}
By Theorem {\ref{bdd family}}, it suffices to show that such $(X, B)$ belongs to a log bounded family modulo B-birational contractions.
In fact, such $(X, B)$ belongs to a log bounded family modulo flops, which is proved by \cite[Theorem 4.1]{rccy3} if $B\neq 0$ and by \cite[Corollary 5.5]{rccy3} and \cite[Theorem 1.3]{jiang} if $B= 0$.
\end{proof}

\begin{cor}
There exist two positive integers $k$ and $N$ satisfying the following property: if $X $ is a
non-canonical klt Calabi--Yau variety of dimension $3$, then $kK_X\sim 0$ and $|\rho_k(\Bir(X,0))|\leq N$.
\end{cor}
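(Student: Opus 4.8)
The plan is to follow the same template as the two preceding corollaries: reduce to Theorem~\ref{bdd family} and feed in a known boundedness statement. Concretely, I would apply Theorem~\ref{bdd family} with $d=3$, $m=1$ and $B=0$ to the class $\mathcal{D}$ of non-canonical klt Calabi--Yau threefolds. The hypotheses of that theorem are immediate: each such $X$ is connected (being a variety, hence irreducible), $(X,0)$ is a klt log Calabi--Yau pair of dimension $3$, and $mB=1\cdot 0=0$ is integral. Thus it suffices to show that $\mathcal{D}$ is log bounded modulo B-birational contractions, and once this is done Theorem~\ref{bdd family} produces the uniform $k$ with $kK_X\sim 0$ together with the uniform bound $|\rho_k(\Bir(X,0))|\le N$.

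For the required boundedness I would invoke \cite[Theorem 1.3]{jiang}, which asserts that non-canonical klt Calabi--Yau threefolds are log bounded modulo flops. A flop is crepant and an isomorphism in codimension one, so it is in particular a B-birational map that is a contraction; hence ``log bounded modulo flops'' implies ``log bounded modulo B-birational contractions'' directly from the definition recalled in Subsection~\ref{sec.bdd}. This is exactly the input needed, so the corollary follows formally. It is worth emphasizing why the non-canonical hypothesis is indispensable: for a klt Calabi--Yau threefold $X$ with resolution $\tilde X$, one has $K_{\tilde X}=\pi^*K_X+\sum_i a_i E_i$ with $a_i>-1$, and $X$ is canonical exactly when $H^0(\tilde X,mK_{\tilde X})=H^0(X,mK_X)\neq 0$, whereas for non-canonical $X$ the divisors with $a_i<0$ force all pluricanonical sections of $\tilde X$ to vanish, so that $\tilde X$ has Kodaira dimension $-\infty$ and is uniruled. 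Canonical Calabi--Yau threefolds, by contrast, do not form a bounded family, so Theorem~\ref{bdd family} cannot be applied to them and no uniform $N$ can be expected.

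The entire mathematical weight of the argument therefore sits in the cited boundedness theorem \cite[Theorem 1.3]{jiang}, and I expect this to be the main obstacle: establishing from scratch that non-canonical (equivalently, uniruled) klt Calabi--Yau threefolds form a family that is log bounded modulo flops is a substantial result, relying on the theory of rationally connected and uniruled Calabi--Yau threefolds developed in \cite{rccy3, jiang}. Granting it, the remaining steps---verifying the hypotheses of Theorem~\ref{bdd family} and observing that flops are B-birational contractions---are routine, and the proof reads exactly like those of the two preceding corollaries.
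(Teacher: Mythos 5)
Your argument is exactly the paper's: apply Theorem~\ref{bdd family} after noting that log boundedness modulo flops (from the first author's gap theorem paper \cite{jiang}) implies log boundedness modulo B-birational contractions. The only discrepancy is the citation: the paper invokes \cite[Theorem 1.4]{jiang} for non-canonical klt Calabi--Yau threefolds, whereas \cite[Theorem 1.3]{jiang} is the statement used in the previous corollary for the rationally connected case.
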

\begin{proof}
By Theorem {\ref{bdd family}}, it suffices to show that such $(X, 0)$ belongs to a log bounded family modulo B-birational contractions.
In fact, such $(X, 0)$ belongs to a log bounded family modulo flops, which is proved by \cite[Theorem 1.4]{jiang}.
\end{proof}

\subsection{Non-klt case}

In this subsection, we deal with the non-klt case.

\begin{thm}\label{thm-dlt}
Let $m$ be a positive integer. 
Then there exists a positive integer $N$ depending only on $m$ satisfying the following property:
if $(S, B)$ is a projective connected non-klt
lc pair of dimension $2$ such that $m(K_S+B)\sim 0$, then
$|\rho_{km}(\Bir(S,B))|\leq N$ for any positive integer $k$. 
\end{thm}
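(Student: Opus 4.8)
The plan is to transport the two–dimensional non–klt problem to a one–dimensional one on the non–klt locus, and then to bound the resulting scalar by exactly the covering–and–Betti–number mechanism of Lemma~\ref{cover-lem}. By Remark~\ref{remark1} it is enough to treat $k=1$, i.e.\ to bound $|\rho_m(\Bir(S,B))|$; and by Remark~\ref{rem bir=bir} I may replace $(S,B)$ by any crepant birational model without changing either $\Bir$ or $\rho_m$. So I first pass to a $\mathbb Q$–factorial dlt modification, after which $(S,B)$ is dlt and, being non–klt, has $D:=\lfloor B\rfloor\neq 0$; the components of $D$ are then smooth, of genus $\le 1$, and meet transversally with at most two through any point.

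Next I restrict to $D$. Since $m(K_S+B)\sim 0$, tensoring the ideal sequence $0\to\mathcal O_S(-D)\to\mathcal O_S\to\mathcal O_D\to 0$ with $\mathcal O_S(m(K_S+B))\cong\mathcal O_S$ gives a $\Bir(S,B)$–equivariant injection
$$
H^0\bigl(S,m(K_S+B)\bigr)\ \hookrightarrow\ H^0\bigl(D,m(K_D+\mathrm{Diff})\bigr),
$$
where $(D,\mathrm{Diff})$ is the slc log Calabi–Yau curve produced by adjunction, with $m(K_D+\mathrm{Diff})\sim 0$. As the generator $\omega$ is a logarithmic form whose Poincaré residue along every component of $D$ is nonzero, its image is supported on all of $D$; hence $\rho_m(g)$ is precisely the scalar by which $g$ acts on this one–dimensional image, and the problem is now one–dimensional.

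Finally I bound this scalar. Normalizing $D$, resolving, and lifting $g$ as in Lemmas~\ref{lift-lem} and~\ref{cover-lem}, I pass to the index–one cyclic cover $\widetilde D\to D$, on which $K_{\widetilde D}\sim 0$, so each connected component $\widetilde D_a$ is an elliptic curve or a cycle of rational curves and carries a nowhere–zero class in $H^0(\omega_{\widetilde D_a})\subset H^1(\widetilde D_a,\mathbb C)$, with $b_1(\widetilde D_a)\le 2$. Writing $g^*\widetilde\omega=\lambda'\widetilde\omega$ with $(\lambda')^m=\rho_m(g)$, the value $\lambda'$ is an eigenvalue of the integral automorphism induced by $g$ on $H^1(\widetilde D,\mathbb Z)$, so by \cite[Proposition 14.4, Theorem 14.10]{ueno} (as in the proof of Lemma~\ref{cover-lem}) it is a root of unity whose degree over $\mathbb Q$ is at most $b_1(\widetilde D)$. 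Concretely, if $g$ moves $\widetilde D_a$ in an orbit of length $\ell$, then $g^{\ell}$ fixes $\widetilde D_a$ and acts on $H^1(\widetilde D_a,\mathbb Z)\cong\mathbb Z^{\le 2}$, so $(\lambda')^{\ell}$ has order at most $6$ and $\mathrm{ord}(\lambda')\le 6\ell\le 6\,\#\{\text{components of }\widetilde D\}$.

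\emph{Main obstacle.} The estimate is uniform in $m$ only if the number of connected components of $\widetilde D$ — equivalently of $D=\lfloor B\rfloor$ — is bounded, and this is the one genuinely global input. A component–by–component bound alone would grow with the number of components, exactly as in the rotation example of Remark~\ref{rem-conj-1}; routing the argument through $H^1(\widetilde D)$ converts ``number of components'' into ``Betti number'', but one still must bound the latter. The needed fact is that for a dlt log Calabi–Yau surface the reduced boundary $\lfloor B\rfloor$ has at most two connected components and first Betti number $\le 4$ — e.g.\ its dual complex is a point, an interval, a circle, or two points — which follows from the classification of log Calabi–Yau surface boundaries (equivalently the bounded shape of the dual complex in dimension two). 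Granting this, $\#\{\text{components of }\widetilde D\}\le 2m$ and $b_1(\widetilde D)$ is bounded in terms of $m$, so $\mathrm{ord}(\rho_m(g))=\mathrm{ord}\bigl((\lambda')^m\bigr)$ is bounded by a constant $N$ depending only on $m$, as required.
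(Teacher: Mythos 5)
Your skeleton --- reduce to $k=1$, pass to a dlt model, inject $H^0(S,m(K_S+B))$ into sections on the non-klt locus, and finish in dimension one --- matches the paper's. The gap is in the finishing step. The covering-and-Betti-number mechanism of Lemma~\ref{cover-lem} essentially requires the pair to be sub-klt: only then is the pulled-back form $L^{2/m}$-integrable, hence a genuine holomorphic form giving a class in $H^d(V,\mathbb Z)\otimes\mathbb C$ to which \cite[Proposition 14.4, Theorem 14.10]{ueno} applies. On $D=\lfloor B\rfloor$ the adjoint pairs are lc but not klt: wherever two components of $D$ meet, the divisor $\mathrm{Diff}$ acquires reduced points, so the index-one cover does \emph{not} satisfy $K_{\widetilde D}\sim 0$, its components need not be elliptic curves or cycles of rational curves, and the generator is a logarithmic form with nonzero residues defining no class in $H^1(\widetilde D,\mathbb Z)\otimes\mathbb C$. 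The basic counterexample to your mechanism is $(\mathbb P^1,\{0\}+\{\infty\})$: here $b_1=0$, yet $z\mapsto 1/z$ multiplies $dz/z$ by $-1$, an order not detected by any Betti number. The one-dimensional statement is still true, but it is proved by a residue argument (\cite[Proposition 8.4]{fmx19}), not by eigenvalues on $H^1$. Relatedly, a bounded $b_1(D)$ is not an adequate substitute for a bounded number of pieces being permuted: a connected chain can have arbitrarily many irreducible components while $b_1=0$, and $g$ genuinely permutes them.

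The paper closes exactly this gap with three ingredients absent from your sketch. First, Lemma~\ref{lem-good dlt model 2}: after replacing $(S,B)$ by a suitable B-birational dlt model (MMP to a minimal surface plus Lemma~\ref{lem-number}), the cycle part $T_0$ of $\lfloor B\rfloor$ has at most $6$ components, hence at most $12$ lc centers; for the chain part one restricts only to the end curves $T_e$ (at most $4$, preserved under crepant maps by Lemma~\ref{lem-log resol chain}), and restriction to $T_e$ alone is already injective on $H^0(S,m(K_S+B))$. Second, Lemmas~\ref{lem-induced auto} and~\ref{lem-induced b map}: a B-birational self-map of $(S,B)$ does not restrict to a morphism of $D$, so one must track via a common log resolution which lc centers map to which before any induced action on sections is defined. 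Third, the known one-dimensional bound is applied to each of these boundedly many pieces and combined with a factorial bound on the permutation. Replacing your Betti-number step by these three points would turn your outline into the paper's proof.
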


\begin{rem}
In the proof of Theorem \ref{thm-dlt}, we follow the idea in \cite[Proof of Theorem 3.15, Case 1]{fujino-gongyo}. But as we are interested in the boundedness of B-pluricanonical representations, we need to put extra effort. For example, we need to show the lc centers of $(S, B)$ satisfy certain boundedness. To this end, we show in Lemma \ref{lem-good dlt model 2} that certain special lc centers of $(S, B)$ are bounded after replacing $(S, B)$ by a B-birational model. Then we can modify the argument in \cite[Proof of Theorem 3.15, Case 1]{fujino-gongyo} to prove 
Theorem \ref{thm-dlt}. On the other hand, in Section \ref{sec4}, following the idea in \cite[Theorem 3.5]{fujino-ab} and \cite[Page 560, Step 2]{gongyo}, we can give an alternative proof of Theorem \ref{thm-dlt} which involves more technical inductive arguments by using (pre-)admissible sections, see Remark \ref{remark 2nd proof thm-dlt}.
\end{rem}

Firstly we recall the following well-known connectedness lemma 
(cf. \cite[12.3.2]{kollar92}, 
\cite[Proposition 2.1]{fujino-ab}, \cite[Proposition 2.4]{fujino-index},
\cite[Proposition 5.1]{kk}, \cite[Claim 5.3]{gongyo}).

\begin{lem}[{Connectedness lemma, \cite[Claim 5.3]{gongyo}}]\label{lem-num-1}
Let $(X,\Delta)$ be a projective connected lc pair with $K_X+\Delta \sim_{\mathbb{Q}} 0$.
Then $\lfloor\Delta \rfloor$ has at most 2 connected components.
\end{lem}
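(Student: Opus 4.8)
The plan is to convert the statement into a cohomological bound and then feed it into Kawamata--Viehweg-type vanishing. Write $D=\lfloor\Delta\rfloor$, a reduced effective divisor, and consider the ideal-sheaf sequence
$$
0\to \mathcal O_X(-D)\to \mathcal O_X\to \mathcal O_D\to 0.
$$
Since $X$ is projective and connected, $H^0(X,\mathcal O_X)=\mathbb C$, and if $D\neq 0$ then $H^0(X,\mathcal O_X(-D))=0$, while the indicator functions of the connected components of $D$ are linearly independent global sections, so the number of such components is at most $\dim_{\mathbb C}H^0(D,\mathcal O_D)$. Taking cohomology, the image of $H^0(\mathcal O_X)$ in $H^0(\mathcal O_D)$ is one-dimensional and the cokernel injects into $H^1(X,\mathcal O_X(-D))$. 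Hence it suffices to prove
$$
\dim_{\mathbb C}H^1\bigl(X,\mathcal O_X(-D)\bigr)\le 1 .
$$

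For the vanishing I would first record that $-D\sim_{\mathbb Q}K_X+\{\Delta\}$, using $K_X+\Delta\sim_{\mathbb Q}0$, and then pass to a crepant log resolution $f:(Y,\Gamma)\to(X,\Delta)$ with $K_Y+\Gamma=f^*(K_X+\Delta)\sim_{\mathbb Q}0$ and $\Gamma$ of simple normal crossing support. Writing $N=\Gamma^{=1}$ and $A=\lceil-\Gamma^{<1}\rceil$, the divisor $A$ is effective and $f$-exceptional, so $f_*\mathcal O_Y(A)=\mathcal O_X$, and $A-N=\lceil-\Gamma\rceil$ restricts correctly on the non-klt locus. Since $(Y,\Gamma^{<1})$ is klt and the polarizing class $-(K_Y+\Gamma)\sim_{\mathbb Q}0$ is $f$-nef, relative Kawamata--Viehweg vanishing yields $R^1f_*\mathcal O_Y(A-N)=0$; pushing forward the sequence
$$
0\to\mathcal O_Y(A-N)\to\mathcal O_Y(A)\to\mathcal O_N(A)\to 0
$$
then identifies the relevant direct image with $\mathcal O_X(-D)$ (up to the scheme structure along the non-klt locus) and, via Leray, reduces the problem to bounding $\dim H^1(Y,\mathcal O_Y(A-N))\le 1$, where $A-N\sim_{\mathbb Q}K_Y+\Gamma^{<1}$.

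The main obstacle is the \emph{numerically trivial} case of this last, global vanishing: one must bound $H^1(Y,\mathcal O_Y(A-N))$ when the polarization $-(K_Y+\Gamma)\sim_{\mathbb Q}0$ is nef but \emph{not} big. In the nef-and-big situation Kawamata--Viehweg would give outright vanishing, hence a single connected component; it is precisely the failure of bigness that permits a one-dimensional $H^1$ and produces the bound $2$ rather than $1$, the sharp model being $(\mathbb P^1,\{0\}+\{\infty\})$. I would control this either by running the argument relative to the Iitaka fibration of $-(K_X+\Delta)$ --- on whose base the class becomes big, reducing to a fibered situation where the extra cohomology contributes at most one dimension --- or by appealing directly to Kollár's connectedness lemma, whose global form encodes exactly this dichotomy. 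A secondary point deserving care is the bookkeeping that relates the connected components of $\lfloor\Delta\rfloor$ on $X$ to those of $N=\Gamma^{=1}$ on $Y$ through the proper morphism $f$, since taking images could in principle merge or separate components; this is kept faithful by the cohomological identification above, which is what lets the count on $X$ be read off from the controlled $H^1$ on $Y$.
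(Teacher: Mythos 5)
First, note that the paper does not prove this lemma at all: it is quoted directly from the literature (Gongyo's Claim 5.3; see also Koll\'ar et al.\ 12.3.2, Fujino's Proposition 2.1, and Koll\'ar--Kov\'acs Proposition 5.1), so your attempt has to be judged against the standard proofs there. Your cohomological setup is fine as far as it goes --- the ideal-sheaf sequence, the identification of the component count with $h^0(\mathcal O_D)$, the reduction to $h^1(X,\mathcal O_X(-D))\le 1$, and the relative Kawamata--Viehweg vanishing $R^1f_*\mathcal O_Y(A-N)=0$ are all standard. But the argument stops exactly where the theorem begins. The step you flag as ``the main obstacle'' is the entire content of the lemma, and neither of your two proposed remedies closes it. The Iitaka fibration of $-(K_X+\Delta)\sim_{\mathbb Q}0$ is the map to a point (the class has Iitaka dimension $0$), so there is no base on which it ``becomes big''; that suggestion is vacuous. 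And ``appealing directly to Koll\'ar's connectedness lemma, whose global form encodes exactly this dichotomy'' is circular: the relative form (nef and big over the base) is what you already used for the $R^1f_*$ vanishing, while the global form in the numerically trivial case \emph{is} the statement to be proved. The actual proofs in the cited references do not bound an $h^1$; they run an MMP (e.g.\ a $(K_Y+\Gamma-S)$-MMP for a component $S$ of $\Gamma^{=1}$ after a dlt modification) to produce a Mori fiber space whose general fiber is a $\mathbb P^1$ carrying two horizontal lc places, which is where the number $2$ comes from. The injectivity/torsion-freeness machinery that would force $h^1(Y,\mathcal O_Y(A-N))\le 1$ is not available here precisely because the nef part is numerically trivial, so you cannot expect a purely cohomological proof along your lines without importing that geometric input.

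Two secondary points also need repair. First, $\lfloor\Delta\rfloor$ need not be $\mathbb Q$-Cartier even though $K_X+\Delta$ is, so $\mathcal O_X(-D)$ is only a divisorial sheaf and the relation $-D\sim_{\mathbb Q}K_X+\{\Delta\}$ cannot be fed into any vanishing theorem directly on $X$; this forces you upstairs, where the pushforward $f_*\mathcal O_Y(A-N)$ is the non-klt ideal of $(X,\Delta)$, which in general is strictly smaller than $\mathcal O_X(-D)$ (there may be non-divisorial lc centers), not merely ``the same up to scheme structure.'' Second, and relatedly, the component count you would obtain from the sequence on $Y$ is that of $N=\Gamma^{=1}$, i.e.\ of the whole non-klt locus, and passing from a bound on components of $\mathrm{Nklt}(X,\Delta)$ back to a bound on components of $\lfloor\Delta\rfloor$ requires an extra (easy but non-automatic) argument. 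None of this is fatal to the strategy's framing, but as written the proposal reduces the lemma to an unproved statement that is equivalent to it.
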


Recall that a set of smooth curves is a \emph{chain} (resp. \emph{cycle}) if their dual graph is a chain (resp. cycle), and \emph{end curves} of a chain are those curves corresponding to end points of the dual graph. Here we allow two curves intersecting at two distinct points to be a cycle.
\begin{lem}\label{lem-chain or loop}
Let $(S, B)$ be a projective connected dlt pair of dimension $2$ such that $K_S+B \sim_{\mathbb Q} 0$. Then $\lfloor B \rfloor$ has at most 2 connected components, and each connected component is either a chain or a cycle. In this setting, we usually write $\lfloor B \rfloor=T_0+T_1$, where $T_0$ consists of connected components which are cycles, and $T_1$ consists of connected components which are chains, and denote $T_e$ to be the sum of all end curves in $T_1$.
\end{lem}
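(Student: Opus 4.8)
The plan is to combine the connectedness lemma with adjunction on the components of $\lfloor B\rfloor$. First, since a dlt pair is lc, Lemma \ref{lem-num-1} applies directly and shows that $\lfloor B\rfloor$ has at most $2$ connected components; this settles the first assertion and reduces the problem to analyzing a single connected component $D$ of $\lfloor B\rfloor$. Because $(S,B)$ is dlt of dimension $2$, there is a closed subset $Z\subsetneq S$ containing no lc center of $(S,B)$ such that $(S,B)$ is log smooth on $S\setminus Z$. Since every intersection point of two distinct components of $\lfloor B\rfloor$ is a (zero-dimensional) lc center, such points lie in $S\setminus Z$; hence each irreducible component $T$ of $\lfloor B\rfloor$ is a smooth curve, $S$ is smooth along these intersection points, and distinct components meet transversally at those snc points.

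Next, I would run adjunction on each component. For an irreducible component $T$ of $\lfloor B\rfloor$, write $(K_S+B)|_T=K_T+\mathrm{Diff}_T(B-T)$, where the different $\mathrm{Diff}_T(B-T)$ is an effective $\mathbb Q$-divisor on $T$. From $K_S+B\sim_{\mathbb Q}0$ we obtain $K_T+\mathrm{Diff}_T(B-T)\sim_{\mathbb Q}0$, hence $\deg \mathrm{Diff}_T(B-T)=-\deg K_T=2-2g(T)\ge 0$, so $g(T)\le 1$. The key observation is that at each point $p\in T$ lying on another component of $\lfloor B\rfloor$, the coefficient of $\mathrm{Diff}_T(B-T)$ at $p$ equals $1$: at these snc points $S$ is smooth, the two boundary components cross transversally, and no further component of $B$ passes through $p$ by log smoothness. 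Therefore the number of such points, counted with multiplicity, is at most $\deg\mathrm{Diff}_T(B-T)\le 2$, which means that the vertex corresponding to $T$ in the dual graph of $D$ has degree at most $2$.

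If $g(T)=1$ for some component, then $\deg\mathrm{Diff}_T(B-T)=0$ forces $T$ to meet no other component, so its connected component of $\lfloor B\rfloor$ is the single smooth elliptic curve $T$, which we regard as a (trivial) chain. Otherwise every component is a smooth rational curve whose dual-graph vertex has degree at most $2$. Since $D$ is connected, its dual graph is connected, and a connected graph all of whose vertices have degree at most $2$ is precisely a chain or a cycle; the boundary case of two components meeting at two distinct points is exactly the two-curve cycle explicitly allowed in the statement. This yields the desired dichotomy for each connected component.

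I expect the main obstacle to be the careful bookkeeping in the adjunction step: one must use the dlt structure (local log smoothness along lc centers) to verify that the components of $\lfloor B\rfloor$ are smooth and that every pairwise intersection point contributes coefficient exactly $1$ to the different, so that the numerical bound $\deg\mathrm{Diff}_T(B-T)\le 2$ translates cleanly into the combinatorial statement that each vertex of the dual graph has degree at most $2$. Everything after that is the elementary graph-theoretic classification of connected graphs of maximal degree $2$.
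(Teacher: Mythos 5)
Your proof is correct and follows essentially the same route as the paper: the connectedness lemma gives at most two connected components, the dlt condition gives smoothness of the components and transversality of their intersections, and adjunction ($\deg$ of the different $\leq 2$, with each intersection point contributing coefficient $1$) bounds the degree of each vertex of the dual graph by $2$, forcing each connected component to be a chain or a cycle. The paper's proof is just a terser version of the same argument.
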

 
\begin{proof}
As $(S, B)$ is dlt, all irreducible components of $\lfloor B \rfloor$ are normal curves and any two 
curves intersect transversally at a smooth point of $S$. Moreover, for any irreducible component $D$ of $\lfloor B \rfloor$, we have $K_D+B_D=(K_S+B)|_D\sim_{\mathbb Q} 0$ by the adjunction formula. Hence $D$ is either a rational curve or an elliptic curve, and $\deg B_D\leq 2$, which implies that each irreducible component of $\lfloor B \rfloor$ intersects at most two other irreducible components of $\lfloor B \rfloor$. 
 \end{proof}
\begin{lem}\label{lem-log resol chain}
Let $(S, B)$ be a projective connected dlt pair of dimension $2$ such that $K_S+B \sim_{\mathbb Q} 0$. 
Let $\pi: S' \to S$ be any log resolution of $(S, B)$, write $\pi^*(K_S+B)=K_{S'}+B'$.
Then the image of $\pi$ gives a natural 1-1 correspondence 
between connected components of $B'^{=1}$ and those of $\lfloor B \rfloor$, which preserves the type of each connected component. 
Moreover, if one connected component of $B'^{=1}$ is a chain, then $\pi$ maps the end curves of this connected component to those of the corresponding connected component of $\lfloor B \rfloor$ .
\end{lem}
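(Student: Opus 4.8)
The plan is to determine which prime divisors appear in $B'^{=1}$ and how they are configured, by reducing everything to the local structure of $(S,B)$ at the lc centers of $\lfloor B\rfloor$. Since $\pi$ is crepant, $(S',B')$ is a log smooth sub-lc pair with $K_{S'}+B'\sim_{\mathbb Q}0$, and a prime divisor $C$ on $S'$ lies in $B'^{=1}$ exactly when $a(C,S,B)=-1$, i.e.\ $C$ is either the strict transform $\tilde D_i$ of a component $D_i$ of $\lfloor B\rfloor$ (which keeps coefficient $1$), or a $\pi$-exceptional lc place of $(S,B)$. First I would invoke the standard description of lc centers of a dlt pair: for $(S,B)$ dlt the lc centers are exactly the strata of $\lfloor B\rfloor$, namely the curves $D_i$ and the nodes $D_i\cap D_j$, and $(S,B)$ is snc (in particular $S$ is smooth) in a neighbourhood of each of them. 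Consequently every $\pi$-exceptional component of $B'^{=1}$ has $0$-dimensional center, and that center is necessarily a node of $\lfloor B\rfloor$; in particular the $\tilde D_i$ are, via $\pi$, in natural bijection with the components of $\lfloor B\rfloor$.

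Next I would carry out the local analysis over a single node $p=D_i\cap D_j$. Since $(S,B)$ is snc at $p$, near $p$ it is, up to analytic isomorphism, $(\mathbb{A}^2,\{x=0\}+\{y=0\})$ with $S$ smooth, so over a neighbourhood of $p$ the morphism $\pi$ is a birational morphism between smooth surfaces and hence factors as a finite sequence of blowups of closed points. I would then induct on this sequence, tracking the reduced part of the crepant boundary: blowing up a point $q$ on a smooth surface with crepant boundary $\Theta=\sum b_k\Theta_k$ gives an exceptional divisor of discrepancy $a=1-\sum_k b_k\operatorname{mult}_q\Theta_k$, which equals $-1$ precisely when two branches of coefficient $1$ pass through $q$, i.e.\ $q$ is a node of the current reduced boundary, and in that case the new vertex subdivides the corresponding edge of the dual graph; blowing up any other point produces discrepancy $>-1$ and contributes nothing. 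Since in dimension $2$ an snc point has at most two branches, no blowup can ever create a vertex of degree $\geq 3$. Therefore the components of $B'^{=1}$ lying over $p$ form a (possibly empty) chain joining $\tilde D_i$ and $\tilde D_j$, with no branching. The same bookkeeping shows that over any point of $S$ that is not a node of $\lfloor B\rfloor$ — including the singular points of $S$, which are not lc centers — the pair is plt and no component of $B'^{=1}$ is created.

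Finally I would globalize. The previous steps show that the dual graph of $B'^{=1}$ is obtained from that of $\lfloor B\rfloor$ by subdividing each edge (each node) into a path, where $\pi$ contracts each inserted vertex to the corresponding node and maps each $\tilde D_i$ onto $D_i$. Edge subdivision preserves connected components and sends a chain to a chain and a cycle to a cycle; moreover it only inserts vertices of degree $2$, so the degree-$1$ (end) vertices of a path component are exactly the $\tilde D_i$ corresponding to the end curves $D_i$ of the associated component of $\lfloor B\rfloor$. Passing to connected components then yields the asserted $\pi$-induced $1$-$1$ correspondence preserving the type, together with the statement about end curves. The main obstacle is the local claim over a node: establishing, for an \emph{arbitrary} (not necessarily toric) log resolution, that the discrepancy-$(-1)$ exceptional divisors form an unbranched chain with the two strict transforms as its ends. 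I expect to handle this exactly as above, via the factorization into point-blowups and the elementary discrepancy formula, which keeps the chain/cycle combinatorics under control.
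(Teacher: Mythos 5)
Your proposal is correct and follows essentially the same route as the paper: factor $\pi$ into point blowups, observe that the exceptional curve acquires coefficient $1$ exactly when the blown-up point is a node of the current reduced crepant boundary (so the dual graph only undergoes edge subdivisions), and conclude by induction. You supply somewhat more detail than the paper does --- the explicit discrepancy formula and the appeal to the description of lc centers of a dlt pair to rule out coefficient-$1$ exceptional divisors over non-nodes --- but this is a fleshing-out of the same argument rather than a different one.
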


\begin{proof}
After replacing $S$ by its minimal resolution, we may assume that $S$ is smooth. Note that this does not change $\lfloor B \rfloor$. 
We can factor $\pi: S' \to S$ into blowups at points
$$
S'=S_k\to S_{k-1}\to\cdots S_1\to S_0=S,
$$
 where $\pi_i: S_i \to S_{i-1}$ is a blowup at a smooth point, and write $\pi_i^*(K_{S_{i-1}}+B_{i-1})=K_{S_i}+B_i$.
 Then we can consider $B_i^{=1}$ in each step.
 If the blowed up point is not a $0$-dimensional stratum of $B_{i-1}^{=1}$, then
 $B_i^{=1}$ is isomorphic to $B_{i-1}^{=1}$; if the blowed up point is a $0$-dimensional stratum of $B_{i-1}^{=1}$, then 
 $B_i^{=1}$ is obtained by replacing this point by a smooth rational curve, and in this case, this blowup preserves types of irreducible components and end curves of chains. So we can conclude the lemma inductively.
\end{proof}

The following lemma is well-known to experts (see, for example, \cite[Lemma 1.4]{alexeev-mori}).

\begin{lem}\label{lem-number}
Let $S$ be a smooth projective minimal surface and $B=\sum_i b_iB_i$ an effective $\mathbb Q$-divisor on $S$ such that $b_i\leq 1$ for all $i$. Assume that $K_S+B \sim_{\mathbb Q} 0$.
Then one of the following is true:
\begin{enumerate}
\item $B=0$ and $K_S\sim_{\mathbb Q} 0$; 
\item $S\simeq \mathbb P^2$ with $\sum_i b_i\leq 3$;
\item $S\simeq \mathbb F_n$ for some integer $n\geq 2$ with $\sum_i b_i\leq 4$;
\item $S$ is a $\mathbb P^1$-bundle over an elliptic curve with $\sum_i b_i\leq 2$.
\end{enumerate}
In particular, $\lfloor B \rfloor$ has at most $4$ irreducible components.
\end{lem}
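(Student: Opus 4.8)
The plan is to run through the Enriques--Kodaira classification of the relatively minimal (i.e.\ no $(-1)$-curves) smooth projective surface $S$, feeding in the single structural input that $-K_S\sim_{\mathbb Q}B$ is effective. First I would intersect $K_S+B\sim_{\mathbb Q}0$ with an ample divisor $H$: since $B\ge 0$, this gives $K_S\cdot H\le 0$. If $\kappa(S)\ge 0$ then $K_S$ is nef (as $S$ is relatively minimal), so $K_S\cdot H\ge 0$, forcing $K_S\equiv 0$; by the classification this means $\kappa(S)=0$ and $K_S\sim_{\mathbb Q}0$, whence $B\sim_{\mathbb Q}0$ together with effectivity gives $B=0$. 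This is case (1). It then remains to treat $\kappa(S)=-\infty$, where $S$ is either $\mathbb P^2$ or a geometrically ruled surface $\pi\colon S\to C$. For $S\simeq\mathbb P^2$, writing $B=\sum_i b_iB_i$ with $\deg B_i=d_i\ge 1$, the relation $B\equiv -K_{\mathbb P^2}=3H$ yields $\sum_i b_id_i=B\cdot H=3$, so $\sum_i b_i\le\sum_i b_id_i=3$, which is case (2).

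For the ruled case I would first bound the genus $g$ of $C$. Intersecting $K_S+B\sim_{\mathbb Q}0$ with a fiber $f$ gives $B\cdot f=-K_S\cdot f=2$, so $B$ has a horizontal component dominating $C$. Applying the canonical bundle formula to $\pi$ (or, more elementarily, Riemann--Hurwitz together with adjunction on a horizontal component) yields $2g-2\le 0$, i.e.\ $g\le 1$, so $C$ is rational or elliptic. I would then decompose $B=B_h+B_v$ into its horizontal and vertical (fiberwise) parts. Since every horizontal component meets $f$ with multiplicity $\ge 1$ and all coefficients are $\le 1$, the equality $B_h\cdot f=2$ forces the coefficient-sum of $B_h$ to be at most $2$, realized by at most two horizontal components. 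To control $B_v$ I would compare numerical classes in $\mathrm{Num}(S)=\mathbb Z C_0\oplus\mathbb Z f$, where $C_0$ is a section with $C_0^2=-e$: from $B\equiv -K_S\equiv 2C_0+(e+2-2g)f$, the $f$-coefficient of $B_v$ equals $(e+2-2g)$ minus the $f$-coefficient of $B_h$.

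For $g=1$ (case (4)), the key point is that a horizontal part of relative degree $2$ with coefficients $\le 1$ already carries $f$-coefficient at least $e$, using that any section other than $C_0$ is numerically $\ge C_0+ef$ and that the negative section $C_0$ is unique; since the total $f$-coefficient available is $e+2-2g=e$, no room is left for $B_v$, so $B_v=0$ and $\sum_i b_i$ equals the coefficient-sum of $B_h$, which is $\le 2$. For $g=0$, minimality forces $S\simeq\mathbb F_n$ with $n\ne 1$; the same comparison shows $B_h$ has $f$-coefficient at least $n$ (again by uniqueness of $C_0$ and the bound $b_i\le 1$), leaving at most $2$ for the $f$-coefficient, hence the coefficient-sum, of $B_v$. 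Thus $\sum_i b_i\le 2+2=4$, which gives case (3) for $n\ge 2$; the remaining surface $\mathbb F_0\cong\mathbb P^1\times\mathbb P^1$ is handled identically and also satisfies $\sum_i b_i\le 4$. Finally, since each component of $\lfloor B\rfloor$ has coefficient exactly $1$, their number is at most $\sum_i b_i\le 4$, which proves the last assertion.

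The main obstacle I expect is the ruled-surface bookkeeping, and above all the control of the vertical fiber components $B_v$. The delicate input is that the coefficient constraint $b_i\le 1$ interacts with the uniqueness of the negative section $C_0$ to force $B_h$ to carry enough $f$-coefficient; without the hypothesis $b_i\le 1$ one could load arbitrarily many fibers into $B$, so this is precisely where the assumption is used. Carrying out the resulting ``two sections versus one bisection'' casework rigorously, together with a short analysis of the ruling invariant $e$ (in particular the non-split elliptic case $e=-1$), is the technical heart of the argument; the $\mathbb P^2$ and $\kappa(S)\ge 0$ cases, by contrast, are immediate.
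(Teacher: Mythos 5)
Your proposal is correct in substance, but it takes a genuinely different route through the ruled-surface cases, which is exactly where the paper keeps its proof short. The initial reduction is the same in both arguments (the paper splits on $B=0$ versus $B\neq 0$ rather than on $\kappa(S)$, but this is cosmetic), and the $\mathbb P^2$ case is identical. For a rational base the paper simply quotes \cite[Lemma 1.3]{alexeev-mori} to get $\sum_i b_i\le 4$ on $\mathbb F_n$, and for an elliptic base it applies the canonical bundle formula to $K_S+B\sim_{\mathbb Q}0=f^*K_C$: the discriminant part on $C$ must vanish, so $B$ contains no fiber, and then $\sum_i b_i\le B\cdot F=2$ at once. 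Your N\'eron--Severi bookkeeping replaces both steps by a self-contained computation, and the key mechanism you isolate is sound: an irreducible horizontal curve $Y\not=C_0$ satisfies $Y\cdot C_0\ge 0$, i.e.\ $\beta\ge ae$, so since $C_0$ can absorb at most one unit of the budget $B_h\cdot f=2$ (here $b_i\le 1$ is used, as you note), the horizontal part carries $f$-coefficient at least $e$ (resp.\ $n$), which caps the vertical part. The one place where your stated mechanism does not literally apply is the elliptic case $e=-1$, which you flag but do not resolve: there $C_0^2=1>0$, so there is no negative section whose uniqueness one can invoke, and what you actually need is that $S=\mathbb P(E)$ for the stable degree-one Atiyah bundle $E$ carries no irreducible curve of negative self-intersection, whence every horizontal $Y\equiv aC_0+\beta f$ has $\beta\ge -a/2$ and $\sum_i b_i\beta_i\ge -1=e$, again forcing $B_v=0$. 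That fact is standard but must be supplied; the paper's canonical bundle formula argument avoids all such invariant-by-invariant casework, which is what it buys. In your favor, your computation handles $\mathbb F_0$ explicitly, whereas the lemma's item (3) lists only $n\ge 2$ and leaves $\mathbb F_0\cong\mathbb P^1\times\mathbb P^1$ (also a possible minimal model, still satisfying $\sum_i b_i\le 4$) unmentioned.
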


\begin{proof}
If $B=0$, then $K_S\sim_{\mathbb Q} 0$, which gives (1).
From now on, we assume that $B\neq 0$. Then $K_S\sim_{\mathbb Q} -B\neq 0$ is not pseudo-effective. 
By the minimality of $S$, $S$ is either $\mathbb P^2$ or a $\mathbb P^1$-bundle over a smooth curve $C$.

Suppose that $S\simeq \mathbb P^2$. Then $\sum_i b_i\leq \sum_i b_i(B_i\cdot \ell)= (-K_S\cdot \ell)=3$ by $K_S+B \equiv 0$, where $\ell$ is a line on $\mathbb P^2$. This is (2).

Suppose that $f: S\to C$ is a $\mathbb P^1$-bundle over a smooth curve $C$. Since $K_S+B\sim_{\mathbb Q} 0$, by the canonical
bundle formula (see \cite[Theorem 3.1]{fg-cbd}), $-K_C$ is pseudo-effective, which implies that $C$ is either a rational curve or an elliptic curve.
If $C$ is a rational curve, then $S\simeq \mathbb F_n$ and $\sum_i b_i\leq 4$
by \cite[Lemma 1.3]{alexeev-mori}. This is (3).
If $C$ is an elliptic curve, then we know that $K_S+B\sim_{\mathbb Q} f^*K_C$. Again from the the canonical
bundle formula, $B$ does not contain any fiber of $f$. Hence $\sum_i b_i\leq \sum_i b_i(B_i\cdot F)= (-K_S\cdot F)=2$, where $F$ is a fiber of $S\to C$. 
\end{proof}

\begin{lem}\label{lem-good dlt model 1}
Let $(S, B)$ be a projective connected lc pair of dimension $2$ such that $K_S+B \sim_{\mathbb Q} 0$. Suppose that $(S, \lfloor B \rfloor)$ is log smooth.
Then there exists a projective birational morphism $\pi: (S', B')\to (S, B )$, where $K_{S'}+B'=\pi^*(K_{S}+B)$, such that $S'$ is smooth, $(S', B')$ is dlt, and if we write $\lfloor B' \rfloor=T_0+T_1$ as in Lemma \ref{lem-chain or loop}, then $T_0\leq \pi_*^{-1} \lfloor B \rfloor$.
\end{lem}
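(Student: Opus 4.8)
The plan is to build the model in two stages and then show that no exceptional divisor can enter a cycle. First I would locate the failure of $(S,B)$ to be dlt. Since $(S,\lfloor B\rfloor)$ is log smooth, $S$ is smooth and $\lfloor B\rfloor$ is a simple normal crossing divisor, so $(S,B)$ is already dlt away from finitely many points, namely the $0$-dimensional lc centers produced by the fractional part $\{B\}$. I would resolve each such point by blowing up smooth points and infinitely near points until all lc places are extracted, and then take the minimal resolution of the remaining klt locus so that the ambient surface $S'$ becomes smooth. Because a minimal resolution introduces only divisors of discrepancy $\le 0$, the $\mathbb Q$-divisor $B'$ defined by $K_{S'}+B'=\pi^*(K_S+B)$ is effective, and arranging simple normal crossings along $\lfloor B'\rfloor$ makes $(S',B')$ a crepant dlt model with $S'$ smooth. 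The content of the lemma is then the inclusion $T_0\le \pi_*^{-1}\lfloor B\rfloor$, i.e. that every cycle in $\lfloor B'\rfloor$ consists solely of strict transforms of components of $\lfloor B\rfloor$.

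The key step is an adjunction/degree computation restricting where $\{B\}$ and the exceptional divisors can sit. For a component $C$ of $\lfloor B\rfloor$ (a smooth, hence normal, curve), adjunction gives $K_C+B_C=(K_S+B)|_C\sim_{\mathbb Q}0$, so $\deg B_C=2$ if $C\cong\mathbb P^1$ and $\deg B_C=0$ if $C$ is elliptic, exactly as in the proof of Lemma \ref{lem-chain or loop}. Each transverse intersection of $C$ with another coefficient-one curve contributes $1$ to $\deg B_C$ through the different, so a curve lying in a cycle already spends its entire degree $2$ on its two cycle-neighbours; consequently $\{B\}$ meets no cycle curve, and no cycle curve meets a third coefficient-one curve. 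Moreover, at any node $p=C_i\cap C_j$ of $\lfloor B\rfloor$ the multiplicity of $B$ is at least $2$, so if a component of $\{B\}$ passed through $p$ the blow-up of $p$ would produce a divisor of discrepancy $<-1$, contradicting that $(S,B)$ is lc; hence $\{B\}$ avoids every node of $\lfloor B\rfloor$. Therefore the $0$-dimensional lc centers to be extracted all lie either off $\lfloor B\rfloor$ or on the smooth locus of a single component, and never on a cycle.

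I would then conclude by a configuration argument. Since $S$ and $S'$ are smooth and $\pi$ is a composition of point blow-ups, the $\pi$-exceptional locus over any point is a tree of rational curves and so contains no cycle; and because every extraction is performed over a point meeting at most one component of $\lfloor B\rfloor$, the exceptional tree over such a point is attached to $\pi_*^{-1}\lfloor B\rfloor$ at a single point of a single strict transform. Thus no exceptional curve can be inserted into, or close up, a cycle: any connected component of $\lfloor B'\rfloor$ containing an exceptional curve is a tree, hence a chain by Lemma \ref{lem-chain or loop}, while every cycle of $\lfloor B'\rfloor$ is forced to be the strict transform of a cycle of $\lfloor B\rfloor$. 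This yields $T_0\le\pi_*^{-1}\lfloor B\rfloor$.

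The main obstacle I anticipate is not the final combinatorics but the simultaneous control of the three requirements on the model — that $\pi$ be crepant, that $B'$ remain effective, and that $S'$ be smooth — since extracting lc places and resolving the remaining klt singularities pull in opposite directions. The decisive device is to carry out the resolution through minimal resolutions, which guarantees that all introduced discrepancies are $\le 0$ (hence $B'\ge 0$), combined with the adjunction-degree observation above; it is the latter that genuinely forbids touching the cycles and makes the sharp inclusion $T_0\le\pi_*^{-1}\lfloor B\rfloor$ hold rather than merely describing $T_0$ as a union of chains and strict-transform cycles.
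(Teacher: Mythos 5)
Your argument is correct and follows essentially the same route as the paper: construct the dlt model by blowing up only the $0$-dimensional lc centers, observe (via the multiplicity/discrepancy bound) that these avoid the nodes of $\lfloor B\rfloor$, and conclude that each exceptional locus is a tree attached to at most one strict transform at a single point, so no exceptional curve can lie in a cycle. Your additional adjunction computation showing $\{B\}$ avoids cycle curves, and the detour through minimal resolutions (unnecessary since $S$ is already smooth and point blowups preserve smoothness), are harmless extras on top of the paper's terser version of the same proof.
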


\begin{proof}
As $(S, \lfloor B \rfloor)$ is log smooth, we can construct a dlt model of $(S, B )$ by a sequence of blowups along $0$-dimensional lc centers of $(S, B)$ avoiding blowing up 0-dimensional strata of {$\lfloor B \rfloor$}, say $\pi: (S', B')\to (S, B )$, where $K_{S'}+B'=\pi^*(K_{S}+B)$, such that $S'$ is smooth and $(S', B')$ is dlt. Note that irreducible components of $\lfloor B' \rfloor$ consist of irreducible components of $\pi_*^{-1} \lfloor B \rfloor$ and some exceptional divisors of $\pi$. By the construction of $S'$, there is no exceptional divisor appearing in $T_0$, so $T_0\leq \pi_*^{-1} \lfloor B \rfloor$.
\end{proof}

The following lemma is the key lemma in this subsection, which tells that for any connected lc log Calabi--Yau pair of dimension $2$, there is a ``nice" B-birational model.
\begin{lem}\label{lem-good dlt model 2}
Let $(S, B)$ be a projective connected lc pair of dimension $2$ such that $K_S+B \sim_{\mathbb Q} 0$. 
Then there exists a dlt pair $(S', B')$ B-birational to $(S, B)$ such that $S'$ is smooth, and if we write $\lfloor B' \rfloor=T_0+T_1$ as in Lemma \ref{lem-chain or loop}, then $T_0$ has at most $6$ irreducible components.
\end{lem}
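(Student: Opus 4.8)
The plan is to produce the required model by first passing to a smooth minimal surface through a crepant (hence B-birational) minimal model program, and then resolving back to a dlt pair while keeping the cyclic boundary short.

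First I would replace $(S,B)$ by a smooth dlt pair: taking a log resolution followed by Lemma~\ref{lem-good dlt model 1} yields a smooth dlt pair $(S_1,B_1)$ that is B-birational to $(S,B)$, with $K_{S_1}+B_1=\pi^*(K_S+B)\sim_{\mathbb Q}0$. If $\lfloor B_1\rfloor$ has no cyclic connected component we are already done with $T_0=0$, so I assume $B_1\neq 0$. The crucial observation is that, since $K_{S_1}+B_1\sim_{\mathbb Q}0$, every step of a $K_{S_1}$-MMP is automatically crepant for the pair $(S_1,B_1)$: a contracted curve $E$ satisfies $(K_{S_1}+B_1)\cdot E=0$, so the contraction is crepant and therefore B-birational. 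Because $B_1$ is effective and nonzero, $K_{S_1}\sim_{\mathbb Q}-B_1$ is not pseudo-effective, so this MMP terminates with a smooth Mori fiber space $(S_2,B_2)$, where $B_2$ is the crepant pushforward of $B_1$ with coefficients still at most $1$. By Lemma~\ref{lem-number}, $S_2$ is one of $\mathbb P^2$, a Hirzebruch surface, or a $\mathbb P^1$-bundle over an elliptic curve, and, crucially, $\lfloor B_2\rfloor$ has at most $4$ irreducible components.

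Next I would return to the dlt world without losing this bound. I take a crepant log resolution $(S_3,B_3)\to(S_2,B_2)$ so that $(S_3,\lfloor B_3\rfloor)$ is log smooth, and apply Lemma~\ref{lem-good dlt model 1} to produce a smooth dlt model $(S',B')$, B-birational to $(S_2,B_2)$ and hence to $(S,B)$, whose cyclic part satisfies $T_0\leq \pi_*^{-1}\lfloor B_3\rfloor$. Thus every curve of $T_0$ is the strict transform of a component of $\lfloor B_3\rfloor$, that is, either the strict transform of one of the at most $4$ components of $\lfloor B_2\rfloor$, or an exceptional divisor of $S_3\to S_2$ of discrepancy $-1$ (an lc place over a non-snc point of $(S_2,B_2)$). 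The final step is to bound how many such curves can lie on the loop(s): I would read off the cyclic part directly on $S_2$, where the configuration of the at most $4$ boundary components is constrained by $B_2\equiv -K_{S_2}$ and the explicit geometry of $\mathbb P^2$, the Hirzebruch surfaces, and the elliptic ruled surfaces. Together with the connectedness lemma (Lemma~\ref{lem-num-1}), which limits $\lfloor B'\rfloor$ to at most $2$ connected components, a short case analysis of the possible loops, and of the bounded number of blow-ups needed to turn each tangency or node into an snc cycle, should yield $|T_0|\leq 6$.

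The main obstacle is precisely this last bound, and in particular ruling out long \emph{cusp} cycles, i.e.\ cyclic chains all of whose self-intersections are $\leq -2$: such a cycle cannot be shortened by any crepant birational modification, so a priori it could force $T_0$ to be arbitrarily large. The resolution of this difficulty is built into the strategy: a cyclic component with all self-intersections $\leq -2$ is $K$-nef, hence is never contracted by the $K_{S_1}$-MMP and must survive on $S_2$; but then it is contained in $\lfloor B_2\rfloor$, which has at most $4$ components by Lemma~\ref{lem-number}. This reduces the whole question to the bounded geometry of a single smooth minimal surface, where the remaining count is elementary. I expect the bookkeeping of the extra exceptional curves produced when resolving the finitely many non-snc points of $(S_2,B_2)$ into an snc cycle to be the most delicate part of the write-up, since one must verify that only a bounded, and explicitly small, number of such curves is ever needed to close up each loop.
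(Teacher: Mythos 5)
Your overall route is the same as the paper's: pass to a smooth model, run a $K$-MMP (automatically crepant since $K+B\sim_{\mathbb Q}0$) down to a minimal surface $S_2$, invoke Lemma~\ref{lem-number} to get at most $4$ components in $\lfloor B_2\rfloor$, and then blow back up to a smooth dlt model via Lemma~\ref{lem-good dlt model 1}, whose conclusion $T_0\leq\pi_*^{-1}\lfloor B\rfloor$ confines the cyclic part to strict transforms. The gap is that the one step which actually produces the number $6$ --- bounding how many coefficient-one exceptional curves are created when $(S_2,\lfloor B_2\rfloor)$ is made simple normal crossing --- is deferred to ``a short case analysis'' and ``bookkeeping''. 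This is not routine as stated: if two boundary components could meet tangentially, or a single component could carry several singular points, the number of discrepancy $-1$ exceptional divisors over $S_2$ would not be bounded by $2$, and the surface-by-surface analysis over $\mathbb P^2$, $\mathbb F_n$ and elliptic ruled surfaces that you propose would have to be carried out in full before the bound $6$ could be asserted.

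The paper closes this gap with two local observations that your write-up is missing. First, log canonicity of $(S_2,B_2)$ forces any two distinct curves of $\lfloor B_2\rfloor$ to meet transversally at smooth points of $S_2$ (a tangency of two coefficient-one curves, or three through a point, is not lc), so no blowups at all are needed where distinct components meet. Second, adjunction gives $2p_a(D)-2=(K_{S_2}+D)\cdot D\leq(K_{S_2}+B_2)\cdot D=0$ for each component $D$ of $\lfloor B_2\rfloor$, so $p_a(D)\leq 1$; a singular component is therefore a uninodal rational curve (a cusp is again non-lc) and, having $p_a=1$, is disjoint from the rest of the boundary, so by Lemma~\ref{lem-num-1} there are at most two singular components. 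Each node is resolved by a single blowup introducing exactly one new coefficient-one curve, whence $4+2=6$. Your concern about long ``cusp cycles'' is correctly dispatched (they survive the MMP and are caught by Lemma~\ref{lem-number}), but it is not the real obstruction; the tangency/multiple-singularity configurations are, and lc-ness rules them out for you.
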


\begin{proof}
We may assume that $S$ is smooth after taking the minimal resolution. After running a $K_S$-MMP, we get a birational {morphism} $\pi: S\to S_0$ to a minimal surface $S_0$. Then $K_{S_0}+\pi_*B \sim_{\mathbb Q} 0$ and $\lfloor \pi_*B \rfloor$ has at most $4$ irreducible components by Lemma \ref{lem-number}. Note that $(S_0, \pi_*B)$ is lc but not necessarily dlt. $(S_0, \pi_*B)$ being lc implies that any two curves in $\lfloor \pi_*B \rfloor$ intersect transversally.

For a component $D$ of $\lfloor \pi_*B \rfloor$, we know that 
$$
2p_a(D)-2={(K_{S_0}+D)\cdot D\leq (K_{S_0}+\pi_*B)\cdot D} =0.
$$ 
Hence $p_a(D)=0$ or $1$, moreover, if $p_a(D)=1$, then $D$ is disjoint from $\Supp(\pi_*B- D)$. As  $\lfloor \pi_*B \rfloor$ has at most 2 connected components, it has at most 2 singular irreducible components. Moreover, a singular irreducible component can be resolved by the blowup at its singular point. So after at most two blowups, we get a B-birational model $(S_1, B_1)\to (S_0, \pi_*B)$ such that $(S_1, \lfloor B_1\rfloor)$ is log smooth and $\lfloor B_1\rfloor$ has at most $6$ irreducible components. Then we can apply Lemma \ref{lem-good dlt model 1} to 
$(S_1, B_1)$.
\end{proof}

\begin{rem}
The bound on the number of irreducible components of $T_0$ could be much sharper by carefully discussions case by case. Indeed, it can be shown that we can construct $(S', B')$ such that $T_0$ has at most $4$ irreducible components.
Since we won't need a sharp bound in this paper, we are satisfied with
the bound in Lemma \ref{lem-good dlt model 2} and left the details to those interested readers.
\end{rem}

The following lemma is a modification of \cite[Remark 2.15]{fujino-gongyo} in our situation.
\begin{lem}\label{lem-induced auto}
Let $(S, B)$ be a projective connected dlt pair of dimension $2$ such that $m(K_S+B) \sim 0$
for a positive integer $m$. Consider $\lfloor B \rfloor =T_0+T_1$ and $T_e$ as in Lemma \ref{lem-chain or loop}. Assume that irreducible components of $T_e$ are disjoint from each other.
Then a B-birational map 
$g: (S,B) \dashrightarrow (S,B) $
induces natural automorphisms
$$
g^*: H^0(T_0, m(K_{ T_0 }+B_{ T_0 }))
\to H^0( T_0 ,m(K_{ T_0 }+B_{ T_0 })),
$$
$$
g^*: H^0(T_e, m(K_{ T_e }+B_{ T_e }))
\to H^0( T_e, m(K_{ T_e }+B_{ T_e }))
$$
where $K_{ T_0 }+B_{T_0}=(K_S+B)|_{ T_0 }$, $K_{ T_e }+B_{T_e}=(K_S+B)|_{ T_e }$. 
\end{lem}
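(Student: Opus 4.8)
The plan is to construct $g^*$ from a single resolution that dominates $g$ and resolves $(S,B)$ on both sides, and then to transport sections by adjunction, the rigidity coming from Lemma \ref{lem-log resol chain}. First I would take a common resolution $\alpha,\beta\colon W\to S$ of $g$, chosen high enough that both $\alpha$ and $\beta$ are log resolutions of $(S,B)$, and set $K_W+B_W:=\alpha^*(K_S+B)=\beta^*(K_S+B)$. Because the two crepant pullbacks coincide, the reduced divisor $P:=B_W^{=1}$ is one and the same divisor whether read off from $\alpha$ or from $\beta$.

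Next I would apply Lemma \ref{lem-log resol chain} to $\alpha$ and to $\beta$ in turn. Each application yields a type-preserving bijection between the connected components of $P$ and those of $\lfloor B\rfloor$, sending cyclic components to cyclic components and end curves of chains to end curves of chains. Letting $P_0$ be the union of the cyclic components of $P$ and $P_e$ the sum of the end curves of its chain components (with $T_0,T_e$ as in Lemma \ref{lem-chain or loop}), I conclude that both $\alpha$ and $\beta$ restrict to birational morphisms $P_0\to T_0$ and $P_e\to T_e$ inducing bijections on connected components. Composing the correspondence through $\alpha$ with that through $\beta$ shows that the birational correspondence underlying $g$ carries $T_0$ onto $T_0$ and $T_e$ onto $T_e$; this matching of the boundary pieces is the combinatorial core of the lemma.

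With the pieces identified I would build the two automorphisms by adjunction. Restricting $\alpha^*(K_S+B)=\beta^*(K_S+B)$ to $P_e$ and using that adjunction commutes with crepant pullback gives $(\alpha|_{P_e})^*(K_{T_e}+B_{T_e})=(K_W+B_W)|_{P_e}=(\beta|_{P_e})^*(K_{T_e}+B_{T_e})$. Under the hypothesis that the components of $T_e$ are disjoint, $T_e$ is a disjoint union of smooth, hence normal, curves, so $\beta|_{P_e}\circ(\alpha|_{P_e})^{-1}$ is a B-birational self-map of $(T_e,B_{T_e})$ in the sense of Definition \ref{b-bir}, and Definition \ref{b-rep} produces the automorphism $g^*$ of $H^0(T_e,m(K_{T_e}+B_{T_e}))$. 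For $T_0$ I would argue directly on global sections. Since $m(K_S+B)\sim 0$ pulls back and restricts to $m(K_W+B_W)|_{P_0}\sim 0$, while $m(K_{T_0}+B_{T_0})\sim 0$, all the line bundles in play are trivial, so $H^0(T_0,m(K_{T_0}+B_{T_0}))$ and $H^0(P_0,m(K_W+B_W)|_{P_0})$ are spaces of locally constant functions; the bijection on connected components then makes both $(\alpha|_{P_0})^*$ and $(\beta|_{P_0})^*$ isomorphisms between them, and I set $g^*:=\big((\alpha|_{P_0})^*\big)^{-1}\circ(\beta|_{P_0})^*$. A routine comparison of two common resolutions through a third shows $g^*$ is independent of $W$, so it is well defined.

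The step I expect to be the main obstacle is the non-normality of $T_0$: its connected components are nodal cycles of curves, so the formalism of Definition \ref{b-bir}, which presupposes normal schemes, does not apply to $T_0$ verbatim, and one must define $g^*$ by hand on $H^0(T_0,\mathcal O_{T_0})$ and check directly that the two pullbacks are isomorphisms. This is exactly the difficulty that the disjointness hypothesis removes for $T_e$, reducing the end-curve case to the normal setting handled by Definitions \ref{b-bir} and \ref{b-rep}. The other delicate point, handled by Lemma \ref{lem-log resol chain} applied in both directions, is to rule out $g$ mixing cyclic components with chains or displacing an end curve into the interior of a chain; this type-preservation is what makes $T_0$ and $T_e$ canonically $g$-invariant, following the philosophy of \cite[Remark 2.15]{fujino-gongyo}.
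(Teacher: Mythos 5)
Your proof is correct and follows essentially the same route as the paper: a common log resolution, Lemma~\ref{lem-log resol chain} applied to both projections to match the cyclic parts and end curves upstairs and downstairs, and $g^*:=(\alpha^*)^{-1}\circ\beta^*$. The only cosmetic difference is that where you justify the isomorphism on $H^0$ over $T_0$ via triviality of $m(K_S+B)$ and the bijection on connected components (note the map onto $T_0$ need not be literally birational, since inserted exceptional curves get contracted, but only the component bijection is used), the paper simply quotes $\alpha_*\mathcal{O}_{T'_0}=\mathcal{O}_{T_0}$ from \cite[Remark 2.15]{fujino-gongyo}.
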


\begin{proof}
Consider a common log resolution
$$
\xymatrix{
&(S', B')\ar[ld]_{\alpha}\ar[rd]^{\beta}\\ 
(S,B) \ar@{-->}[rr]^{g} & & (S,B),
}
$$
where $K_{S'}+B'=\alpha^*(K_S+B)=\beta^*(K_S+B)$.
By Lemma \ref{lem-log resol chain}, we can consider $B'^{=1}=T'_0+T'_1$ and $T'_e$ accordingly to $\lfloor B\rfloor=T_0+T_1$ and $T_e$.
Note that this expression is independent of $\alpha$ and $\beta$. Since irreducible components of $T_e$ are disjoint from each other, it is clear that $\alpha_*\mathcal{O}_{T'_e}=T_e$ and $\beta_*\mathcal{O}_{T'_e}=T_e$.
By \cite[Remark 2.15]{fujino-gongyo}, we have $\alpha_*\mathcal{O}_{T'_0}=T_0$ and $\beta_*\mathcal{O}_{T'_0}=T_0$.
Hence we get natural automorphisms
\begin{align*}
g^*: H^0(T_0, m(K_{ T_0 }+B_{ T_0 }))
{}&\stackrel{\beta^*}{\longrightarrow} H^0( T'_0 , m(K_{ T'_0 }+B_{ T'_0 })) \\
{}&\stackrel{(\alpha^{*})^{-1}}{\longrightarrow} 
 H^0( T_0 , m(K_{ T_0 }+B_{ T_0 })),
\end{align*}
\begin{align*}
g^*: H^0(T_e, m(K_{ T_e }+B_{ T_e }))
{}&\stackrel{\beta^*}{\longrightarrow} H^0( T'_e , m(K_{ T'_e }+B_{ T'_e })) \\
{}&\stackrel{(\alpha^{*})^{-1}}{\longrightarrow} 
 H^0( T_e , m(K_{ T_e }+B_{ T_e })),
\end{align*}
where $K_{ T'_0 }+B_{ T'_0 }=(K_{S'}+{B'})|_{ T'_0 }$ and $K_{ T'_e }+B_{ T'_e }=(K_{S'}+{B'})|_{ T'_e }$.
\end{proof}

The following lemma is a modification of \cite[Lemma 2.16]{fujino-gongyo} in our situation.

\begin{lem}\label{lem-induced b map}
Let $(S, B)$ be a projective connected dlt pair of dimension $2$ such that $K_S+B \sim_{\mathbb Q} 0$. Consider $\lfloor B \rfloor =T_0+T_1$ and $T_e$ as in Lemma \ref{lem-chain or loop}. 
Let 
$g: (S,B) \dashrightarrow (S,B) $ be a B-birational map.
Take a common log resolution
$$
\xymatrix{
&(S', B')\ar[ld]_{\alpha}\ar[rd]^{\beta}\\ 
(S,B) \ar@{-->}[rr]^{g} & & (S,B),
}
$$
where $K_{S'}+B'=\alpha^*(K_S+B)=\beta^*(K_S+B)$.
By Lemma \ref{lem-log resol chain}, we can consider $B'^{=1}=T'_0+T'_1$ and $T'_e$ accordingly to $\lfloor B\rfloor=T_0+T_1$ and $T_e$.
\begin{enumerate}
\item For any irreducible component $C$ of $T_e$, there exists an irreducible component $C'$ of $T_e$ and an irreducible component $D$ of $T'_e$, such that $ \alpha|_D$ and $ \beta|_D$ are B-birational morphisms
$$
\xymatrix{
&(D, B'_D)\ar[ld]_{\alpha|_D}\ar[rd]^{\beta|_D}\\ 
(C,B_C) \ar@{-->}[rr] & & (C',B_{C'}),
}
$$
where $K_C+B_C=(K_S+B)|_C$, $K_{C'}+B_{C'}=(K_S+B)|_{C'}$, and $K_D+ B'_D=(K_{S'}+B')|_D$.
Therefore, $ \beta|_D\circ (\alpha|_D)^{-1}: (C,B_C) \dashrightarrow (C',B_{C'})$ is a B-birational map.

\item For any lc center $P$ of $(S, B)$ in $T_0$, we can find an lc center $Q$ of $(S, B)$ contained in $P$, an lc center $R$ of $(S', B')$, and an lc center $Q'$ of $(S, B)$ contained in $T_0$ such that $ \alpha|_R$ and $ \beta|_R$ are B-birational morphisms
$$
\xymatrix{
&(R, B'_R)\ar[ld]_{\alpha|_R}\ar[rd]^{\beta|_R}\\ 
(Q,B_Q) \ar@{-->}[rr] & & (Q',B_{Q'}),
}
$$
where $K_Q+B_Q=(K_S+B)|_Q$, $K_{Q'}+B_{Q'}=(K_S+B)|_{Q'}$, and $K_R+ B'_R=(K_{S'}+B')|_R$.
Therefore, $ \beta|_R\circ (\alpha|_R)^{-1}: (Q,B_Q) \dashrightarrow (Q',B_{Q'})$ is a B-birational map.
Moreover, by the natural restriction map, $H^0(P, m(K_P+B_P))\simeq H^0(Q, m(K_Q+B_Q))$ where $K_P+B_P=(K_S+B)|_P$.
\end{enumerate}
\end{lem}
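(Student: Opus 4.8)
The plan is to derive both parts from a single mechanism: the compatibility of adjunction with crepant pullback. Concretely, whenever $f\colon (S',B')\to (S,B)$ is one of the two crepant birational morphisms $\alpha,\beta$ and $V$ is an lc center of $(S',B')$ (a stratum of $B'^{=1}$) such that $f|_V\colon V\to f(V)$ is birational onto an lc center $W=f(V)$ of $(S,B)$, I want to conclude that $f|_V\colon (V,B'_V)\to (W,B_W)$ is itself B-birational, where $K_V+B'_V=(K_{S'}+B')|_V$ and $K_W+B_W=(K_S+B)|_W$ are produced by adjunction. Granting this engine, each part reduces to selecting the correct stratum $V$ on $S'$ and composing the resulting B-birational morphisms for $\alpha$ and for $\beta$.

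For part (1), I would start from an end curve $C$ of $T_e$. Applying Lemma \ref{lem-log resol chain} to $\alpha$ yields a chain-type connected component of $B'^{=1}$ over the chain containing $C$, and among its end curves one, call it $D\subseteq T'_e$, satisfies $\alpha(D)=C$; since $\alpha$ is birational and $D$ is not $\alpha$-exceptional (its image is the curve $C$), the map $\alpha|_D\colon D\to C$ is birational. The crucial point is that the same $D$ behaves well for $\beta$: Lemma \ref{lem-log resol chain} applied to $\beta$ forces $\beta(D)=:C'$ to again be an end curve of $T_e$, so $D$ is not $\beta$-exceptional either and $\beta|_D\colon D\to C'$ is birational. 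Feeding both into the engine produces B-birational morphisms $(D,B'_D)\to (C,B_C)$ and $(D,B'_D)\to (C',B_{C'})$, and $\beta|_D\circ(\alpha|_D)^{-1}$ is the required B-birational map.

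For part (2) the genuinely new difficulty arises: unlike end curves of chains, an individual curve of a cycle need not map to a curve under both $\alpha$ and $\beta$, because the $\alpha$-strict transform of a cycle curve in $T_0$ may well be $\beta$-exceptional, so there may be no one-dimensional stratum simultaneously non-exceptional for $\alpha$ and $\beta$. I would circumvent this by passing to a minimal lc center $Q\subseteq P$. If $T_0$ carries a node (the generic case of a cycle of rational curves) I take $Q$ to be such a node, a zero-dimensional lc center; then any zero-dimensional stratum $R$ of $B'^{=1}$ over $Q$ (which exists because the cycle $T'_0$ dominates $T_0$ by Lemma \ref{lem-log resol chain}) is a point, hence automatically non-exceptional for both $\alpha$ and $\beta$, and $Q':=\beta(R)$ is again a node of $T_0$ by the type-preservation in Lemma \ref{lem-log resol chain}. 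In the degenerate case where a component of $T_0$ is a single elliptic curve $E$, I instead take $Q=P=E$; the corresponding one-dimensional stratum $R$ of $B'^{=1}$ over $E$ has positive genus and so is non-exceptional for both $\alpha$ and $\beta$, and the engine applies directly. In either case one obtains the B-birational map $\beta|_R\circ(\alpha|_R)^{-1}\colon (Q,B_Q)\dashrightarrow (Q',B_{Q'})$. Finally, the asserted isomorphism $H^0(P,m(K_P+B_P))\simeq H^0(Q,m(K_Q+B_Q))$ follows because $m(K_P+B_P)\sim 0$ makes both spaces one-dimensional and the restriction map is the evaluation of a nowhere-vanishing section along $Q$.

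The step I expect to be the main obstacle is the engine itself, namely verifying that the restriction of a crepant birational morphism to an lc center is again crepant, i.e. that $(K_{S'}+B')|_V=(f|_V)^*\big((K_S+B)|_W\big)$ after adjunction; this is where one must track the different on the lower-dimensional strata and the coefficients carefully, and it is precisely the point mirroring \cite[Lemma 2.16]{fujino-gongyo}. The conceptual hurdle specific to this setting is the cycle case of part (2): recognizing that one cannot stay with one-dimensional centers and must descend to a zero-dimensional minimal lc center to secure a stratum non-exceptional for both projections. Once these are in place, the selection of $D$, $Q$, $R$, and $Q'$ is a routine application of Lemma \ref{lem-log resol chain}.
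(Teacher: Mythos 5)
Your argument is correct and follows essentially the same route as the paper: part (1) is read off from Lemma \ref{lem-log resol chain} together with the (standard) observation that restricting the crepant morphisms $\alpha,\beta$ to a non-contracted stratum stays crepant, and part (2) is exactly the mechanism of \cite[Lemma 2.16]{fujino-gongyo} --- descend to a minimal lc center so as to find a stratum of $B'^{=1}$ non-exceptional for both projections --- which the paper simply cites while you re-derive it concretely in the surface case (nodes of the cycle, resp.\ the elliptic-curve degeneration). Your identification of $Q'$ inside $T_0$ via the type-preservation in Lemma \ref{lem-log resol chain} and your one-line justification of $H^0(P,m(K_P+B_P))\simeq H^0(Q,m(K_Q+B_Q))$ are precisely the points the paper's two-sentence proof leaves implicit.
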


\begin{proof}
(1) is a direct consequence of Lemma \ref{lem-log resol chain}. (2) is just \cite[Lemma 2.16]{fujino-gongyo}, the fact that $Q'$ is in $T_0$ follows from Lemma \ref{lem-log resol chain}.
\end{proof}

Now we are ready to prove Theorem \ref{thm-dlt}.

\begin{proof}[Proof of Theorem \ref{thm-dlt}]
By Remark \ref{rem bir=bir},
we can replace $(S,B)$ by a B-birational dlt model as in Lemma \ref{lem-good dlt model 2}.
In particular, we can assume that $(S, B)$ is dlt, $S$ is smooth, and $T_0$ has at most $6$ irreducible components.
Here we consider $\lfloor B \rfloor =T_0+T_1$ and $T_e$ as in Lemma \ref{lem-chain or loop}. Note that $\lfloor B \rfloor \neq 0$ as $(S, B)$ is not klt. Fix any $g\in \Bir(S,B)$.

Suppose that $T_1\neq 0$. 
We can consider $T_e$. As $T_1$ has at most 2 connected components, ${T_e}$ has at most 4 irreducible components.
After further blowups, we may assume that irreducible components of $T_e$ are disjoint from each other.
Since $m(K_S+B)\sim 0$, $$
H^0(S, m(K_S+B)-{T_e})=H^0(S, -{T_e})=0.
$$
Therefore, the restriction map
$$
H^0(S, m(K_S+B)) \to H^0({T_e}, m(K_{T_e}+B_{T_e}))
$$
is injective, where $K_{T_e}+B_{T_e}=(K_S+B)|_{T_e}$.
Write $T_e=\amalg_{i=1}^kC_i$ with $k\leq 4$. Write $K_{C_{i}}+ B_{C_{i}}=(K_S+B)|_{C_i}$. Applying Lemma \ref{lem-induced b map}(1), for each index $i$, $g$ induces a B-birational map $(C_i, B_{C_i})\dashrightarrow (C_{i'}, B_{C_{i'}})$ for some index $i'$. Therefore, $g^{k!}$ induces a B-birational map $(C_i, B_{C_i})\dashrightarrow (C_{i}, B_{C_{i}})$ for each $i$.
Note that $m(K_{C_{i}}+ B_{C_{i}})\sim 0$ and $C_i$ is of dimension 1, hence
by \cite[Proposition 8.4]{fmx19},
there exists an integer $N_1$ depending only on $m$ such that
$$
|\rho_m(\Bir({C_i}, B_{C_i}))|\leq N_1.
$$ 
Let $h:=24\cdot (N_1)!$.
Then $(g^*)^h$ induces the identity map on $H^0({C_i}, m(K_{C_i}+B_{C_i}))$ for each $i$.
By Lemma \ref{lem-induced auto}, there is a 
commutative diagram
$$
\xymatrix{
0\ar[r] &H^0(S, m(K_S+B)) \ar[d]_{(g^*)^h}\ar[r]& 
H^0({T_e}, m(K_{T_e}+B_{T_e}))\ar[d]^{(g^*)^h=\id}\\ 
0\ar[r] & H^0(S, m(K_S+B)) \ar[r] & H^0({T_e}, m(K_{T_e}+B_{T_e})).
}
$$
It follows that $(g^*)^h=\id$ on $H^0(S, m(K_S+B))$. In particular,
 $|\rho_m(\Bir(S,B))|\leq h$.

Suppose that $T_0\neq 0$. Then by construction, $T_0$ has at most $6$ irreducible components
 and at most $6$ 0-dimensional strata.
In particular, $T_0$ contains at most $12$ lc centers of $(S, B)$, denoted by $P_1, \ldots, P_l$ for $l\leq 12$.
Since $m(K_S+B)\sim 0$, 
$$
H^0(S, m(K_S+B)-{T_0})=H^0(S, -{T_0})=0.
$$
Therefore, the restriction map
$$
H^0(S, m(K_S+B)) \to H^0({T_0}, m(K_{T_0}+B_{T_0}))
$$
is injective, where $K_{T_0}+B_{T_0}=(K_S+B)|_{T_0}$. 
By Lemma \ref{lem-induced b map}(2),
for every $P_i$, we can find lc centers $Q_i, Q'_i$ of $(S,B)$ in $T_0$ such that $(Q_i, B_{Q_i}) \dashrightarrow ( Q'_i, B_{Q'_i})$
is a B-birational map and 
$$
H^0(P_i, m(K_{P_i}+B_{P_i})) \simeq 
H^0(Q_i, m(K_{Q_i}+B_{Q_i}))
$$
by the natural restriction map, where $K_{P_i}+B_{P_i}=(K_S+B)|_{P_i}$
and $K_{Q_i}+B_{Q_i}=(K_S+B)|_{Q_i}$. Here $Q_i, Q'_i$ also belong to the set $\{P_1, \ldots, P_l\}$.
Therefore, $g^{l!}$ induces a natural B-birational map $g^{l!}: (Q_i, B_{Q_i}) \dashrightarrow (Q_i, B_{Q_i})$ for each $i$. 
Note that $m(K_{Q_{i}}+ B_{Q_{i}})\sim 0$ and $Q_i$ is of dimension at most 1, hence
by \cite[Proposition 8.4]{fmx19},
there exists an integer $N_1$ depending only on $m$ such that
$$
|\rho_m(\Bir({Q_i}, B_{Q_i}))|\leq N_1.
$$ 
Let $h':=12!\cdot (N_1)!$.
Then $(g^*)^{h'}$ induces the identity map on $H^0({Q_i}, m(K_{Q_i}+B_{Q_i}))$ for each $i$.
By the natural embedding 
$$
H^0(T_0, m(K_{T_0}+B_{T_0})) \subset
\bigoplus_i H^0(P_i, m(K_{P_i}+B_{P_i}))\simeq \bigoplus_i H^0(Q_i, m(K_{Q_i}+B_{Q_i})),
$$
 $(g^*)^{h'}$ induces the identity map on $H^0(T_0, m(K_{T_0}+B_{T_0}))$.
By Lemma \ref{lem-induced auto}, there is a 
commutative diagram
$$
\xymatrix{
0\ar[r] &H^0(S, m(K_S+B)) \ar[d]_{(g^*)^{h'}}\ar[r]& 
H^0({T_0}, m(K_{T_0}+B_{T_0}))\ar[d]^{(g^*)^{h'}=\id}\\ 
0\ar[r] & H^0(S, m(K_S+B)) \ar[r] & H^0({T_0}, m(K_{T_0}+B_{T_0})).
}
$$
It follows that $(g^*)^{h'}=\id$ on $H^0(S, m(K_S+B))$. In particular,
 $|\rho_m(\Bir(S,B))|\leq h'$.
 \end{proof}

\section{Applications}\label{sec4}

In this section, we discuss applications of Theorem \ref{main-thm} to Conjecture \ref{index conj slc}. Note that Conjecture \ref{index conj slc} can be viewed as the effective version of the abundance conjecture for log Calabi--Yau pairs, so the framework of Fujino \cite{fujino-ab} provides an inductive argument between Conjecture \ref{main-conj} and Conjecture \ref{index conj slc}.

\subsection{Fujino's work on (pre-)admissible sections}
In this subsection, we recall some key ideas of \cite{fujino-ab} (see also \cite{gongyo, xu2}).
 {{Pre-admissible sections}} and {{admissible sections}} are used in
 the inductive proof of the index conjecture.

\begin{defn}[{\cite[Definition 4.1]{fujino-ab}}]\label{admin}
Let $(X,\Delta)$ be a projective sdlt pair of dimension $d$. 
Let $m$ be a positive integer such that $m(K_X+\Delta)$ is Cartier. 
Let $\nu: X'=\amalg_i X'_i \to X=\cup_i X_i$ be the normalization.
Let $\Delta'$ be the $\mathbb Q$-divisor such that $K_{X'}+\Delta'=\nu^*(K_X+\Delta)$
and $\Delta'_i=\Delta'|_{X'_i}$.
We define {\em{pre-admissible section}} and {\em{admissible section}} inductively on dimension:
\begin{itemize}
\item[$(1)$] $s\in H^0(X, m(K_X+\Delta))$ is {\em{pre-admissible}} 
if the restriction 
$$
\nu^* s|_{(\amalg_i \lfloor \Delta'_i\rfloor)} \in 
H^0(\amalg_i \lfloor \Delta'_i\rfloor, m(K_{X'}+\Delta')|_{(\amalg_i \lfloor \Delta'_i\rfloor)})
$$ 
is admissible. Denote the set of pre-admissible sections by $\PA (X, m(K_X+\Delta))$.
\item[$(2)$] $s\in H^0(X, m(K_X+\Delta))$ is {\em{admissible}} if $s$ is pre-admissible
and $g^*(s|_{X_j})=s|_{X_i}$ for every B-birational map 
$$
g: (X_i, \Delta_i) \dashrightarrow (X_j, \Delta_j)
$$
for every $i, j$. Denote the set of admissible sections by $\A (X, m(K_X+\Delta))$.
\end{itemize}
Note that if $s\in \A(X, m(K_X+\Delta))$, 
then $s|_{X_i}$ is $\Bir(X_i, \Delta_i)$-invariant for every $i$.
\end{defn}

We can run the same inductive argument as in
 \cite[Section 4]{fujino-ab} (see also \cite[Section 5]{gongyo} and \cite[Section 5]{xu1}). In the following we briefly recall the key results with {proofs} following their ideas.
 Taking boundedness into account, Theorems A, B, C in \cite{gongyo} can be formulated into the following conjectures:

\begin{conja}
Let $m, d$ be two positive integers. Then there exists 
a positive integer $M$ depending only on $m, d$ satisfying the following property:
if $(X,\Delta)$ is a projective (not necessarily connected) dlt pair
of dimension $d$ such that $m(K_X+\Delta)\sim 0$, 
then $\PA(X, Mm(K_X+\Delta))\neq 0$.
\end{conja}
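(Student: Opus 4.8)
The plan is to prove Conjecture A by induction on $d$, run simultaneously with the analogues of Theorems B and C of \cite{gongyo} and, most importantly, with the boundedness of B-pluricanonical representations (Conjecture \ref{main-conj}); the latter is what the induction really consumes, and it is unconditional in dimension $\le 2$ by Theorem \ref{main-thm}. The inductive hypothesis in dimension $<d$ is thus: pre-admissible and admissible sections exist at a uniform multiple, they satisfy the expected compatibility, and $|\rho|$ is uniformly bounded. The base cases $d\le 1$ are direct, since the relevant boundary is then $0$-dimensional. For the step I would exploit that $Mm(K_X+\Delta)\sim 0$ for every $M$, so that one never has to \emph{extend} a section from the boundary into $X$ by a vanishing theorem: lifting will be pure linear algebra among trivializing sections.

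Concretely, write $D=\lfloor\Delta\rfloor$. By adjunction $K_D+\Delta_D=(K_X+\Delta)|_D$ makes $(D,\Delta_D)$ an sdlt pair of dimension $d-1$ with $m(K_D+\Delta_D)\sim 0$, and by Definition \ref{admin} a section $s\in H^0(X,Mm(K_X+\Delta))$ is pre-admissible exactly when $s|_D$ is admissible on $D$. First I would invoke the inductive Conjecture B in dimension $d-1$ to produce a uniform $M$, depending only on $m$ and $d$, together with a nonzero admissible section $\sigma\in\A(D,Mm(K_D+\Delta_D))$. The second step is to lift $\sigma$ to $X$ at the same level. On each connected component $X_c$ one has $H^0(X_c,Mm(K_{X_c}+\Delta_c))\simeq\mathbb{C}$, spanned by a nowhere-zero trivialization $\theta_c$; when $X_c$ is non-klt, $Mm(K_{X_c}+\Delta_c)\sim 0$ gives that the kernel $H^0(X_c,-D\cap X_c)$ of restriction vanishes, so $\theta_c|_{D}$ is nonzero and spans the diagonal line inside $H^0(D\cap X_c,\cdot)$. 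By the connectedness lemma (Lemma \ref{lem-num-1}) $D\cap X_c$ has at most two connected pieces, and I would choose $\sigma$ to take equal values on them: for pieces in different B-birational classes this is free, while for pieces in the same class admissibility already forces the matching (this is where $M$ must absorb the relevant root-of-unity scalars). Such a diagonal $\sigma$ lies in the image of restriction, so $\sigma=(\sum_c\lambda_c\theta_c)|_D$ for suitable scalars $\lambda_c$; setting $s=\sum_c\lambda_c\theta_c$ gives a nonzero section with $s|_D=\sigma$ admissible, hence $s\in\PA(X,Mm(K_X+\Delta))$ is nonzero with $M$ uniform.

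The main obstacle is the uniformity of $M$, and it is entirely concentrated in two linked points. The first is the existence of admissible $\sigma$ with a bound independent of $(X,\Delta)$: this is the inductive Conjecture B in dimension $d-1$, whose proof in turn rests on bounding the orders of the scalars $\rho(g)$ attached to B-birational maps among the irreducible components of $D$, i.e.\ precisely on Conjecture \ref{main-conj} in dimension $d-1$. The second is that the matching across a single B-birational class be realizable by one honest power of a level-$m$ trivialization, which again forces $M$ to be divisible by the bounded orders of those scalars; choosing $M$ to be a common multiple of the numbers $N_i!$ for $i<d$ handles both at once. Consequently the induction closes only where Conjecture \ref{main-conj} is known, and with Theorem \ref{main-thm} this is exactly the range $d\le 3$. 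A final, more routine point is the bookkeeping of the at-most-two connected pieces of $D\cap X_c$ supplied by the connectedness lemma, which is what makes the ``diagonal'' choice of $\sigma$ available and the lift purely linear-algebraic.
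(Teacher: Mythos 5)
Your overall architecture is the same as the paper's (Proposition \ref{C to A}): restrict to $D=\lfloor\Delta\rfloor$, use the $(d-1)$-dimensional inductive package to produce a nonzero admissible section there, and lift it back to $X$ using that each non-klt connected component $X_c$ has $H^0(X_c,Mm(K_{X_c}+\Delta_c))\simeq\mathbb{C}$ with injective restriction to $D\cap X_c$. (A small labelling slip: the input you need on $D$ is Conjecture C$_{d-1}$, the existence of \emph{admissible} sections, not Conjecture B$_{d-1}$; you clearly have the whole A/B/C package in mind, so this is cosmetic.)

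The genuine gap is in the lifting step, which is exactly where the paper stops arguing and cites \cite[Proposition 4.5]{fujino-ab} as a black box. When $D\cap X_c$ has two connected components $D_1\sqcup D_2$ (the case supplied by Lemma \ref{lem-num-1}), the image of restriction is the line spanned by $(\theta_c|_{D_1},\theta_c|_{D_2})$, and you need $\sigma$ to land on that line. Your claim that ``admissibility already forces the matching'' is precisely the nontrivial content of Fujino's proposition, not a consequence of the definition: admissibility only says $g^*(\sigma|_{D_2})=\sigma|_{D_1}$ for B-birational maps $g$ between the components, and this constrains nothing unless one also knows that the \emph{trivialization} $\theta_c$ satisfies $g^*(\theta_c|_{D_2})=\theta_c|_{D_1}$ for one such $g$. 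That equivariance is where the real work lies: one needs the structure theorem for lc pairs with disconnected boundary (\cite[Proposition 2.1]{fujino-ab}, \cite[Claim 5.3]{gongyo}) exhibiting $D_1$ and $D_2$ as sections of a birational $\mathbb{P}^1$-fibration exchanged by an involution, and one needs $M$ \emph{even} to kill the sign that the involution introduces on the pluricanonical trivialization --- a condition the paper imposes explicitly and your argument never produces. Your fallback for ``pieces in different B-birational classes'' (rescale independently) is both unnecessary --- the structure theorem shows the two components are always B-birational --- and unsound as a general mechanism, since rescaling a B-birational class acts simultaneously on all components of that class across all $X_c$, so the matching conditions for different $c$ could in principle form an unsolvable cycle of constraints. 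So either cite \cite[Proposition 4.5]{fujino-ab} (with the evenness and the removal of the $\dim\leq 3$ hypothesis discussed in the paper), or supply the $\mathbb{P}^1$-link argument; as written the key step is asserted rather than proved.
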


\begin{conjb}[{=Conjecture \ref{main-conj}}]
Let $m, d$ be two positive integers. Then there exists 
a positive integer $N$ depending only on $m, d$ satisfying the following property:
if $(X, \Delta)$ is a projective connected
dlt pair of dimension $d$ such that $m(K_X+\Delta)\sim 0$, then
$|\rho_m(\Bir(X,\Delta))|\leq N$. 
\end{conjb}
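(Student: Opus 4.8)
The plan is to follow Fujino's inductive framework and to separate the two genuinely different mechanisms that force the eigenvalues of $\rho_m(g)$ to be roots of unity of uniformly bounded order. Since the statement concerns a dlt pair, by Remark \ref{rem bir=bir} I may replace $(X,\Delta)$ by any B-birational dlt model, hence by a convenient one, and then split the argument according to whether $\lfloor\Delta\rfloor=0$ (the \emph{klt case}) or $\lfloor\Delta\rfloor\neq 0$ (the \emph{non-klt case}). The induction runs on the dimension $d$, with the one-dimensional case supplied classically.

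In the klt case $H^0(X, m(K_X+\Delta))\simeq\mathbb{C}$ and $\rho_m(g)$ is a single scalar, so it suffices to bound its order. Here I would show that the klt log Calabi--Yau pairs in question, with $m\Delta$ integral, form a bounded family, pass to a log resolution, and form the index-$m$ cyclic cover as in Lemma \ref{cover-lem}. The order of the scalar then divides the integer $N_V$ attached to a resolution $V$ of the cover, and $N_V$ is governed by the Betti number $b_d(V)$; uniform boundedness of $b_d(V)$ across the family yields a uniform $N$. This is exactly the mechanism packaged in Theorem \ref{bdd family}, so concretely I would verify log boundedness (modulo B-birational contractions) and invoke that theorem.

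In the non-klt case I would exploit the boundary. Since $m(K_X+\Delta)\sim 0$ and $\lfloor\Delta\rfloor\neq 0$, one has $H^0(X, m(K_X+\Delta)-\lfloor\Delta\rfloor)=H^0(X,-\lfloor\Delta\rfloor)=0$, so the restriction map $H^0(X, m(K_X+\Delta))\hookrightarrow H^0(\lfloor\Delta\rfloor, m(K_X+\Delta)|_{\lfloor\Delta\rfloor})$ is injective and $\Bir(X,\Delta)$-equivariant. The divisor $\lfloor\Delta\rfloor$ together with its adjoint boundary is an sdlt log Calabi--Yau pair of dimension $d-1$, whose irreducible components and lc centers are permuted by every $g\in\Bir(X,\Delta)$. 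Using Conjecture B in dimension $\le d-1$ to bound the order of the induced representations on the individual strata, together with a uniform bound on the number of strata and on the permutations that the $g$'s realize, I would conclude that $(g^*)^{h}=\id$ on the boundary for some $h=h(m,d)$, hence on $H^0(X, m(K_X+\Delta))$ by injectivity. This is precisely the (pre-)admissible section machinery of Definition \ref{admin} and of Conjectures A and C.

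The hard part will be the klt case: it rests on boundedness of klt log Calabi--Yau pairs of dimension $d$ with controlled coefficients, a genuinely deep problem that is available unconditionally only in low dimensions --- which is why the unconditional result is stated in dimension $2$, where one may invoke Alexeev's boundedness. In the non-klt case the delicate points are obtaining a uniform bound on the number and combinatorial type of the lc centers of $\lfloor\Delta\rfloor$ (so that the finite permutation group realized by $\Bir(X,\Delta)$ has bounded order) and setting up the restriction-to-strata induction cleanly; in dimension $2$ these are handled by the structural description of $\lfloor\Delta\rfloor$ as a union of chains and cycles and by passing to a model where the relevant strata are disjoint.
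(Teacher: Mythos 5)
Your proposal takes essentially the same route as the paper: the klt/non-klt dichotomy, with the klt case handled by log boundedness, cyclic covers and Betti-number bounds (Theorem~\ref{bdd family} via Lemma~\ref{cover-lem}), and the non-klt case by the injective, equivariant restriction to $\lfloor\Delta\rfloor$ together with the (pre-)admissible-section induction of Conjectures A and C; you also correctly locate the obstruction to an unconditional statement in dimension $\geq 3$ in the klt boundedness input. The one point where your description diverges from the paper's general-dimension reduction (Proposition~\ref{AB to B}) is the mechanism in the non-klt case: you propose to bound the order of the permutation action on the strata of $\lfloor\Delta\rfloor$ and of the induced representations on each stratum, which requires a uniform bound on the number and combinatorics of the lc centers --- something the paper establishes only in dimension $2$ (Lemma~\ref{lem-good dlt model 2}) and uses only in the direct proof of Theorem~\ref{thm-dlt}. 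In Proposition~\ref{AB to B} the paper instead uses that a non-zero pre-admissible section $t$ satisfies $g^*\left(t|_{\lfloor\Delta\rfloor}\right)=t|_{\lfloor\Delta\rfloor}$ \emph{exactly} (Fujino's invariance result), so that $\rho_m(g)=1$ in the non-klt case with no combinatorial control of the boundary; the price is paid instead in Conjecture A$_d$, i.e., in producing the pre-admissible section after passing to a bounded multiple $Mm$. If you intend your argument to work in dimension $d\geq 3$, you should route the non-klt case through that exact invariance rather than through a bound on the number of strata, which is not available there.
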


\begin{conjb'}
Let $m, d$ be two positive integers. Then there exists 
a positive integer $N$ depending only on $m, d$ satisfying the following property:
if $(X, \Delta)$ is a projective connected
klt pair of dimension $d$ such that $m(K_X+\Delta)\sim 0$, then
$|\rho_m(\Bir(X,\Delta))|\leq N$. 
\end{conjb'}

\begin{conjc}
Let $m, d$ be two positive integers. Then there exists 
a positive integer $M$ depending only on $m, d$ satisfying the following property:
if $(X,\Delta)$ is a projective (not necessarily connected) dlt pair
of dimension $d$ such that $m(K_X+\Delta)\sim 0$, 
then $\A(X, Mm(K_X+\Delta))\neq 0$.
\end{conjc}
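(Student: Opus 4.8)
The plan is to deduce Conjecture C from Conjectures A and B in the same dimension $d$, running an induction on $d$ (restriction to the boundary will drop us to dimension $d-1$). So I assume that Conjectures A, B, C hold in every dimension $<d$ and that Conjectures A and B hold in dimension $d$. The point is that the mere \emph{existence} of admissible sections is already Fujino's and Gongyo's theorem; the new content is the \emph{uniform boundedness} of the level $M$, and this will come precisely from the uniform bound $N$ furnished by Conjecture B. The conceptual slogan is that boundedness of the B-pluricanonical representations upgrades the Fujino--Gongyo finiteness to a uniform level.

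First I would reduce to connected components. Since $(X,\Delta)$ is dlt, $X=\amalg_i X_i$ is a disjoint union of connected normal varieties with $m(K_{X_i}+\Delta_i)\sim 0$, so $H^0(X_i, \ell m(K_{X_i}+\Delta_i))\simeq \mathbb C$ for every $\ell$. Partition the $X_i$ into $\Bir$-equivalence classes, declaring $X_i\sim X_j$ when a B-birational map $X_i\dashrightarrow X_j$ exists (finitely many components, so finitely many classes). By the definition of admissibility, producing an admissible section amounts to choosing one nonzero section $s_i$ on each $X_i$, at a single level $Mm$, so that: (i) its restriction to $\lfloor\Delta_i\rfloor$ is admissible in dimension $d-1$ (pre-admissibility); (ii) $s_i$ is $\Bir(X_i,\Delta_i)$-invariant; and (iii) $g^*s_j=s_i$ for every B-birational $g\colon X_i\dashrightarrow X_j$.

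Next I would carry out the symmetrization. By Conjecture A there is a bounded $M_1$ and, on a fixed representative $X_{i_0}$ of each class, a pre-admissible section $s_{i_0}\in\PA(X_{i_0}, M_1m(K_{X_{i_0}}+\Delta_{i_0}))$. By Conjecture B the image of the representation $\rho_{M_1m}(\Bir(X_{i_0},\Delta_{i_0}))$ is a finite group of scalars of order at most $N$, hence cyclic, so each $\rho_{M_1m}(g)$ is a root of unity of order dividing $N!$. Replacing $s_{i_0}$ by $s_{i_0}^{\,N!}$ (nonzero, since $H^0\simeq\mathbb C$) yields a $\Bir(X_{i_0},\Delta_{i_0})$-invariant section at the \emph{bounded} level $M:=N!\,M_1$; its boundary restriction is the $N!$-th power of an admissible $(d-1)$-dimensional section, hence again admissible, so $s_{i_0}^{\,N!}$ is still pre-admissible. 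I then transport it to each other member $X_j$ of the class by $s_j:=(\phi_j^{-1})^*s_{i_0}^{\,N!}$ for a chosen B-birational $\phi_j\colon X_{i_0}\dashrightarrow X_j$. Invariance makes $s_j$ independent of $\phi_j$, and for any B-birational $g\colon X_i\dashrightarrow X_j$, writing $\psi:=\phi_j^{-1}\circ g\circ\phi_i\in\Bir(X_{i_0},\Delta_{i_0})$ and using $\phi_j^{-1}\circ g=\psi\circ\phi_i^{-1}$ gives $g^*s_j=(\phi_i^{-1})^*\psi^*s_{i_0}^{\,N!}=(\phi_i^{-1})^*s_{i_0}^{\,N!}=s_i$. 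Together with (ii) this is exactly admissibility. Collecting the $s_i$ over all components produces an admissible section at level $M=N!\,M_1$, which depends only on $m$ and $d$.

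The main obstacle I anticipate is controlling the boundary, i.e. verifying pre-admissibility throughout. Two points need care: that the $(d-1)$-dimensional restriction $s_{i_0}|_{\lfloor\Delta_{i_0}\rfloor}$ is genuinely admissible and remains so after taking powers and after B-birational pullback, and that adjunction is compatible with the induced B-birational maps on lc centers, so that $\phi_j$ really does induce a B-birational map of the boundaries. Both reduce to the inductive hypothesis (Conjecture C in dimension $d-1$) and to the fact that an admissible section, being invariant under \emph{all} B-birational maps, is automatically preserved under any single B-birational pullback (as in Remark \ref{rem bir=bir}). By contrast, the nonvanishing of the symmetrized section---usually the delicate point in averaging arguments---is here immediate from $H^0(X_i,\cdot)\simeq\mathbb C$, since a power of a nonzero section cannot vanish; the genuinely new ingredient, and the sole reason the level $M$ can be taken uniform, is the boundedness supplied by Conjecture B.
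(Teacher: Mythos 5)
Your proposal is essentially correct and, on the klt components, coincides with the paper's argument (Proposition \ref{AB to C}): there the paper likewise takes the $N!$-th power to trivialize the action of $\rho$ on each component and then rescales the sections across a $\Bir$-equivalence class so that all transition scalars $\lambda_{ij}$ become $1$; your ``fix a representative and transport'' is the same normalization written multiplicatively. The genuine divergence is on the non-klt components. The paper does \emph{not} symmetrize there at all: since each non-klt $(X_i,\Delta_i)$ has $\lfloor\Delta_i\rfloor\neq 0$ and $H^0(X_i,-\lfloor\Delta_i\rfloor)=0$, restriction to the boundary is injective, and by \cite[Lemma 4.9]{fujino-ab} any pre-admissible section already satisfies $g^*(s|_{X_j})=s|_{X_i}$; hence $\A=\PA\neq 0$ at the level $M_1m$ furnished by Conjecture A$_d$, with no factor of $N!$ and with only Conjecture B'$_{d}$ (the klt case) as input. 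Your route instead invokes the full Conjecture B$_d$ on a possibly non-klt representative and must then check that the transported section $(\phi_j^{-1})^*s_{i_0}^{N!}$ is still \emph{pre-admissible} on $X_j$; you flag this but do not prove it, and it is not a formal consequence of Remark \ref{rem bir=bir} --- it is precisely the compatibility of pre-admissible sections with B-birational pullback established in \cite[Lemma 4.9]{fujino-ab} (restrictions to corresponding lc centers agree under the induced B-birational maps of strata). Once that lemma is cited the gap closes, but at that point the paper's observation makes the whole transport step on the non-klt part unnecessary. A final structural remark: relying only on B'$_{d}$ is what keeps the paper's induction (B' $\Rightarrow$ C $\Rightarrow$ A $\Rightarrow$ B) transparently non-circular; your assumption of B$_d$ is harmless only because B$_d$ is itself derived from A$_d$ and B'$_{d}$ in Proposition \ref{AB to B}.
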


In the following, Conjecture $\bullet_d$ (resp. Conjecture $\bullet_{\leq d}$) stands for Conjecture $\bullet$ with $\dim X = d$ (resp. $\dim X \leq d$). Note that for the above conjectures, Conjecture $\bullet_d$ naturally implies Conjecture $\bullet_{d-1}$ by considering fiber products with an elliptic curve.

The following propositions are 
taken out from \cite[Section 5]{gongyo} and \cite[Section 4]{fujino-ab}
by minor modifications of proofs.

\begin{prop}[{cf. \cite[Proposition 4.5]{fujino-ab}, \cite[Proposition 4.15]{fujino-index}, \cite[Claim 5.4]{gongyo}, \cite[Lemma 5.11]{xu1}}]\label{C to A}
If Conjecture C$_{d-1}$ holds true, then Conjecture A$_{d}$ holds true.
\end{prop}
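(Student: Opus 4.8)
\medskip

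The plan is to reduce the construction of a pre-admissible section on a $d$-dimensional dlt pair $(X,\Delta)$ to the construction of an admissible section on a $(d-1)$-dimensional object, so that Conjecture C$_{d-1}$ applies directly. The key observation is that pre-admissibility of a section $s\in H^0(X,Mm(K_X+\Delta))$ is a condition imposed purely on the restriction of $s$ to the divisorial part of $\lfloor\Delta\rfloor$ after passing to the normalization. Concretely, writing $\nu:X'=\amalg_i X'_i\to X$ for the normalization, $K_{X'}+\Delta'=\nu^*(K_X+\Delta)$, and $Z=\amalg_i\lfloor\Delta'_i\rfloor$, the restriction $(X',\Delta')|_Z$ is a projective (possibly disconnected) sdlt pair of dimension $d-1$ with $Mm(K_{Z}+\Delta'|_Z)\sim 0$. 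By definition, $s$ is pre-admissible precisely when $\nu^*s|_Z$ is an admissible section on this $(d-1)$-dimensional pair.

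\medskip

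First I would apply Conjecture C$_{d-1}$ to $(Z,\Delta'|_Z)$ to obtain a bounded positive integer $M'$ (depending only on $m,d$) and a nonzero admissible section $t\in\A(Z,M'm(K_Z+\Delta'|_Z))$. Second, the main work is to lift $t$ to a section on all of $X$: I would show that after possibly replacing $M'$ by a bounded multiple $M$, the section $t$ extends to a global section $s\in H^0(X,Mm(K_X+\Delta))$ whose restriction to $Z$ equals $t$ (up to a nonzero scalar). Here the standard tool is the short exact sequence
$$
0\to \mathcal O_X(Mm(K_X+\Delta)-\lfloor\Delta\rfloor)\to \mathcal O_X(Mm(K_X+\Delta))\to \mathcal O_Z(Mm(K_Z+\Delta'|_Z))\to 0,
$$
together with vanishing of $H^1$ of the kernel sheaf; since $Mm(K_X+\Delta)-\lfloor\Delta\rfloor$ differs from $K_X$ plus a relatively trivial piece by the log Calabi–Yau condition, one invokes the appropriate Kawamata–Viehweg-type or injectivity vanishing theorem for lc pairs (as in \cite{fujino-ab}) to guarantee surjectivity of the restriction map. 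By construction, such an $s$ is automatically pre-admissible because its restriction to $Z$ is the admissible section $t$, so $\PA(X,Mm(K_X+\Delta))\neq 0$.

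\medskip

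The hard part will be the surjectivity of the restriction map, i.e. ensuring the lifted section exists with the correct bounded index $M$ uniformly over all $(X,\Delta)$. One must verify that the relevant cohomology vanishing holds for dlt (hence lc) pairs in this non-klt setting, and that the multiple $M$ can be taken depending only on $m$ and $d$ rather than on the individual pair; this is exactly where the inductive bound from Conjecture C$_{d-1}$ feeds in, since $M'$ is already bounded and only a further bounded factor is needed to clear denominators and apply vanishing. A secondary technical point is checking that admissibility of $t$ on $Z$ is genuinely the same condition as pre-admissibility of $s$ on $X$ under the normalization $\nu$; this is immediate from Definition \ref{admin}, but care is required when components $X_i$ are identified along the conductor, so that the B-birational maps among the lower-dimensional strata match up correctly. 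Once these points are settled, the conclusion follows formally.
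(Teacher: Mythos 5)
Your proposal follows essentially the same route as the paper: apply Conjecture C$_{d-1}$ to the sdlt boundary $\lfloor\Delta\rfloor$ to obtain an admissible section, then lift it to $X$, the lift being pre-admissible by definition; the surjectivity you single out as the hard part is precisely \cite[Proposition 4.5]{fujino-ab}, which the paper invokes directly (noting only that the multiple must be taken even and that the dimension-$\leq 3$ hypothesis there is removed via the connectedness lemma). The only omission is the case $\lfloor\Delta\rfloor=0$, where C$_{d-1}$ cannot be applied to an empty boundary but the conclusion is trivial since every section of $H^0(X,m(K_X+\Delta))$ is pre-admissible by definition.
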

\begin{proof}
Let $(X,\Delta)$ be a projective dlt pair
of dimension $d$ such that $m(K_X+\Delta)\sim 0$.
Write $(X, \Delta)=\amalg_i (X_i , \Delta_i)$ where $(X_i , \Delta_i)$ is a projective connected dlt pair of dimension $d$ for each $i$.

If $\lfloor \Delta \rfloor= 0$, then this is trivial because any section in $H^0(X, m(K_{X}+\Delta))$ is pre-admissible. 

If $\lfloor \Delta \rfloor\neq 0$, then by Conjecture C$_{d-1}$, there exists a positive integer $M$ depending only on $m, d$ such that $\A(\lfloor \Delta \rfloor, Mm(K_{X}+\Delta)|_{\lfloor \Delta \rfloor})\neq 0$. We may assume that $M$ is even. Then by \cite[Proposition 4.5]{fujino-ab}, for each $i$, $$\PA(X_i, Mm(K_{X_i}+\Delta_i)))\to \A(\lfloor \Delta_i \rfloor, Mm(K_{X_i}+\Delta_i)|_{\lfloor \Delta_i \rfloor})$$ is surjective. Here we remark that \cite[Proposition 4.5]{fujino-ab} requires that $\dim X_i\leq 3$ and $Mm$ is sufficiently divisible, where the former condition is for applying \cite[Proposition 2.1]{fujino-ab} which can be removed by using \cite[Claim 5.3]{gongyo}, and the latter condition can be replace by $Mm(K_{X_i}+\Delta_i)\sim 0$ and $Mm$ is even, hence we can apply \cite[Proposition 4.5]{fujino-ab} in our situation, see also \cite[Lemma 5.11]{xu1} for detailed {discussions}.
As
$$
\A(\lfloor \Delta \rfloor, Mm(K_{X}+\Delta)|_{\lfloor \Delta \rfloor})\subset \bigoplus_i \A(\lfloor \Delta_i \rfloor, Mm(K_{X_i}+\Delta_i)|_{\lfloor \Delta_i \rfloor}),
$$
 there exists a non-zero section $t\in H^0(X, Mm(K_{X}+\Delta))$ such that $t|_{\lfloor \Delta \rfloor}\in\A(\lfloor \Delta \rfloor, Mm(K_{X}+\Delta)|_{\lfloor \Delta \rfloor})$. By definition, $t$ is pre-admissible, and hence $\PA(X, Mm(K_X+\Delta))\neq 0$.
\end{proof}

\begin{prop}[{cf. \cite[Theorem 3.5]{fujino-ab}, \cite[Page 560, Step 2]{gongyo}}]\label{AB to B}
If Conjecture A$_d$ and Conjecture B'$_{d}$ hold true, then Conjecture B$_d$ holds true.
\end{prop}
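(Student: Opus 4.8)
The plan is to reduce everything to the observation that, for a connected dlt pair $(X,\Delta)$ with $m(K_X+\Delta)\sim 0$, the space $H^0(X, Mm(K_X+\Delta))\simeq\mathbb C$ is one-dimensional (here $X$ is normal and connected, hence irreducible). Thus every $g\in\Bir(X,\Delta)$ acts on this space by a scalar $\lambda_g\in\mathbb C^*$, and the task is to bound the order of the group $\{\lambda_g\}$. First I would split into two cases according to whether $\lfloor\Delta\rfloor=0$ or not. If $\lfloor\Delta\rfloor=0$, then $(X,\Delta)$ is klt and Conjecture B'$_d$ applies verbatim, giving a bound $N'$ depending only on $m,d$.

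So suppose $\lfloor\Delta\rfloor\neq0$, i.e. $(X,\Delta)$ is genuinely non-klt. The idea is to transport the representation to the reduced boundary, where a pre-admissible section pins it down. By Conjecture A$_d$ there is a positive integer $M$ (depending only on $m,d$) and a nonzero pre-admissible section $s\in\PA(X, Mm(K_X+\Delta))$. Since $Mm(K_X+\Delta)\sim0$ and $\lfloor\Delta\rfloor$ is a nonzero effective divisor, we have $H^0(X, Mm(K_X+\Delta)-\lfloor\Delta\rfloor)=H^0(X,-\lfloor\Delta\rfloor)=0$, so the restriction map
$$
H^0(X, Mm(K_X+\Delta))\hookrightarrow H^0\bigl(\lfloor\Delta\rfloor, Mm(K_X+\Delta)|_{\lfloor\Delta\rfloor}\bigr)
$$
is injective. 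In particular $s|_{\lfloor\Delta\rfloor}\neq0$, and by definition of pre-admissibility $s|_{\lfloor\Delta\rfloor}$ is an admissible section of the sdlt pair obtained from $\lfloor\Delta\rfloor$ by adjunction.

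The key step is then to compute $\lambda_g$ on the boundary. Fixing $g\in\Bir(X,\Delta)$ and a common log resolution, I would argue, following the higher-dimensional analogue of Lemma \ref{lem-induced auto} and Lemma \ref{lem-induced b map} (that is, \cite[Remark 2.15 and Lemma 2.16]{fujino-gongyo}), that $g$ induces an automorphism $g^*$ of $H^0(\lfloor\Delta\rfloor, Mm(K_X+\Delta)|_{\lfloor\Delta\rfloor})$ compatible with restriction, and that this $g^*$ is assembled from B-birational maps between the normalized components of $\lfloor\Delta\rfloor$ of exactly the type appearing in the definition of admissibility. Since $s|_{\lfloor\Delta\rfloor}$ is admissible, it is fixed by all such maps, so $g^*(s|_{\lfloor\Delta\rfloor})=s|_{\lfloor\Delta\rfloor}$. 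Combining this with $g^*s=\lambda_g s$, compatibility with restriction, injectivity, and $s|_{\lfloor\Delta\rfloor}\neq0$ forces $\lambda_g=1$. Hence $\rho_{Mm}(\Bir(X,\Delta))$ is trivial, and Remark \ref{remark1} yields $|\rho_m(\Bir(X,\Delta))|\leq M\,|\rho_{Mm}(\Bir(X,\Delta))|=M$. Taking $N=\max\{N',M\}$ then settles both cases.

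The main obstacle I expect is the bookkeeping in the key step: one must check that the self-map $g$ really induces admissibility-type B-birational correspondences on the components of $\lfloor\Delta\rfloor$ (a permutation of the components together with crepant B-birational maps between paired components), and that the induced action on cohomology is genuinely compatible with the restriction $H^0(X,\,\cdot\,)\to H^0(\lfloor\Delta\rfloor,\,\cdot\,)$. This is where the adjunction and the crepant structure on a common resolution enter, together with the preservation of the reduced boundary so that components match up bijectively; it is this point, rather than the elementary vanishing or the appeal to B'$_d$, that must be handled with care.
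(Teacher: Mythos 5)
Your proposal is correct and follows essentially the same route as the paper: split into the klt case (apply Conjecture B'$_d$) and the non-klt case, where a nonzero pre-admissible section from Conjecture A$_d$ restricts injectively to $\lfloor\Delta\rfloor$ and is fixed there by any $g\in\Bir(X,\Delta)$ (the paper quotes \cite[Proposition 4.9]{fujino-ab} for exactly the ``key step'' you flag), forcing $\rho_{Mm}(g)=1$. Your handling of the factor $M$ via Remark \ref{remark1} is in fact slightly more careful than the paper's write-up, which works directly with $m$.
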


\begin{proof}
 Let $(X, \Delta)$ be a projective connected
dlt pair of dimension $d$ such that $m(K_X+\Delta)\sim 0$.

If $(X, \Delta)$ is klt, then by Conjecture B'$_{d}$, there exists a positive integer $N$ depending only on $m,d$ such that $|\rho_m(\Bir(X,\Delta))|\leq N$.

If $(X, \Delta)$ is not klt, then there exists a non-zero section $t\in \PA(X, m(K_X+\Delta))$
by Conjecture A$_d$. Note that in this case 
 $\PA(X, m(K_X+\Delta))=H^0(X, m(K_X+\Delta))\simeq \mathbb{C}$. So $\rho_m(g)\in \mathbb{C}^*$ for any $g\in \Bir(X, \Delta)$. On the other hand, by \cite[Proposition 4.9]{fujino-ab}, for any $g\in \Bir(X, \Delta)$, $g^*t|_{\lfloor \Delta \rfloor}=t|_{\lfloor \Delta \rfloor}$. So $\rho_m(g)=1$ for any $g\in \Bir(X, \Delta)$, that is, $|\rho_m(\Bir(X,\Delta))|= 1$.
\end{proof}

\begin{prop}[{cf. \cite[Lemma 4.9]{fujino-ab}, \cite[Proposition 5.10]{xu1}}]\label{AB to C}
If Conjecture A$_d$ and Conjecture B'$_{d}$ hold true, then 
 Conjecture C$_{d}$ holds true.
\end{prop}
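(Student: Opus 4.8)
The plan is to promote a pre-admissible section, whose existence is guaranteed by Conjecture A$_d$, to an admissible one, using the representation bound of Conjecture B'$_d$ on the klt components and the invariance already encoded in pre-admissibility on the non-klt components. Write $(X,\Delta)=\amalg_i(X_i,\Delta_i)$ into connected components, so that $H^0(X_i,\ell(K_{X_i}+\Delta_i))\simeq\mathbb{C}$ whenever $\ell(K_{X_i}+\Delta_i)\sim 0$ and $\Bir(X_i,\Delta_i)$ acts through a character $\rho_\ell$.

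First I would trivialize, at a single bounded level, the self-action on every top-dimensional component, i.e. arrange $\rho_{Mm}(\Bir(X_i,\Delta_i))=\{1\}$ for one $M$ depending only on $m,d$. There are two cases. If $(X_i,\Delta_i)$ is klt, then Conjecture B'$_d$ gives $|\rho_m(\Bir(X_i,\Delta_i))|\le N$; as this is a finite subgroup of $\mathbb{C}^*$, passing to the level $N!\,m$ kills it. If $(X_i,\Delta_i)$ is not klt, then applying Conjecture A$_d$ to $(X_i,\Delta_i)$ produces a nonzero $t_i\in\PA(X_i,M_1m(K_{X_i}+\Delta_i))$; since $\lfloor\Delta_i\rfloor\neq 0$ and $M_1m(K_{X_i}+\Delta_i)\sim 0$, the restriction $H^0(X_i,M_1m(K_{X_i}+\Delta_i))\to H^0(\lfloor\Delta_i\rfloor,(M_1m(K_{X_i}+\Delta_i))|_{\lfloor\Delta_i\rfloor})$ is injective, and by \cite[Proposition 4.9]{fujino-ab} every $g\in\Bir(X_i,\Delta_i)$ fixes $t_i|_{\lfloor\Delta_i\rfloor}$, hence fixes $t_i$, so $\rho_{M_1m}(\Bir(X_i,\Delta_i))=\{1\}$. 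Taking $M$ to be a common multiple of $N!$ and $M_1$ then gives $\rho_{Mm}(\Bir(X_i,\Delta_i))=\{1\}$ for all $i$.

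Next I would build the section. Conjecture A$_d$ applied to $(X,\Delta)$ gives a nonzero $t\in\PA(X,M_1m(K_X+\Delta))$, and its power $\tau:=t^{M/M_1}\in\PA(X,Mm(K_X+\Delta))$ is again pre-admissible (a power of a pre-admissible section is pre-admissible, by the dimension induction implicit in Definition \ref{admin}). Since B-birationality is crepant, it preserves klt-ness, so the components split into B-birational equivalence classes that are entirely klt or entirely non-klt; on each class the triviality of the self-representations forces the pullback isomorphisms $g^*\colon H^0(X_j,Mm(K_{X_j}+\Delta_j))\to H^0(X_i,Mm(K_{X_i}+\Delta_i))$ to be independent of the chosen $g\colon(X_i,\Delta_i)\dashrightarrow(X_j,\Delta_j)$. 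Fixing in each class a representative $i_0$, chosen among components on which $\tau$ is nonzero, and a map $g_j\colon(X_{i_0},\Delta_{i_0})\dashrightarrow(X_j,\Delta_j)$ for each member $j$, I set $s_j:=(g_j^*)^{-1}\tau_{i_0}$. Path-independence makes $s=(s_j)_j$ well defined and nonzero, and the identity $g^*=(g_i^*)^{-1}g_j^*$ yields $g^*(s|_{X_j})=s|_{X_i}$ for every B-birational $g$, while within each component $g^*s_i=s_i$ holds automatically because $\rho_{Mm}$ is trivial there.

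Finally I would verify that $s$ is pre-admissible, which together with the previous paragraph gives $0\neq s\in\A(X,Mm(K_X+\Delta))$ and proves Conjecture C$_d$. The key point is that the transport leaves the boundary restriction unchanged: each $g_j$ induces B-birational maps between the lc centers of $(X_{i_0},\Delta_{i_0})$ and those of $(X_j,\Delta_j)$, and these are among the maps under which the admissible section $\tau|_{(\amalg_i\lfloor\Delta_i'\rfloor)}$ is already compatible, so $s_j|_{\lfloor\Delta_j'\rfloor}=\tau_j|_{\lfloor\Delta_j'\rfloor}$ and hence $s|_{(\amalg_i\lfloor\Delta_i'\rfloor)}=\tau|_{(\amalg_i\lfloor\Delta_i'\rfloor)}$ remains admissible. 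I expect this matching to be the main obstacle: one must carefully align the B-birational self-maps and cross-maps of the top-dimensional components with the maps they induce on the boundary strata, so that the top-dimensional symmetrization is compatible with the already-admissible boundary, exactly as in the inductive framework of \cite[Section 4]{fujino-ab}; the triviality secured in the first step is precisely what makes the transport single-valued and $s$ well defined.
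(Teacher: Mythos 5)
Your proposal is correct and follows essentially the same route as the paper's proof: existence of a section from Conjecture A$_d$, triviality of the self-representations at a uniformly bounded level (via Conjecture B'$_d$ and the $N!$ trick on klt components, and via the injective restriction to $\lfloor\Delta_i\rfloor$ together with the invariance of pre-admissible sections from \cite[Proposition 4.9]{fujino-ab} on non-klt components), and a cocycle/transport normalization across B-birational equivalence classes. The paper merely packages this as a klt/non-klt case split --- observing that $\PA=\A$ holds outright in the non-klt case via \cite[Lemma 4.9]{fujino-ab}, and writing your transport constants as $\lambda_{ij}=\lambda_i\lambda_j^{-1}$ in the klt case --- so the underlying computations coincide.
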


\begin{proof}

Let $(X,\Delta)$ be a projective (not necessarily connected) dlt pair
of dimension $d$ such that $m(K_X+\Delta)\sim 0$.
Write $(X, \Delta)=\amalg_i (X_i , \Delta_i)$ where $(X_i , \Delta_i)$ is a projective connected dlt pair of dimension $d$ for each $i$.
Our goal is to construct a non-zero section in $\A(X, km(K_X+\Delta))$ for some positive integer $k$ independent of $(X,\Delta)$.

We may write $(X, \Delta)=(X' , \Delta')\amalg (X'' , \Delta'')$ into two parts where $(X' , \Delta')$ consists of all klt $(X_i , \Delta_i)$ and {$(X'' , \Delta'')$} consists of all non-klt $(X_i , \Delta_i)$. Then $$\A(X, km(K_{X}+\Delta))=\A(X', km(K_{X'}+\Delta'))\oplus \A(X'', km(K_{X''}+\Delta''))$$ 
for all positive integer $k$ because there is no B-birational map between a klt pair and a non-klt pair.
Hence we only need to consider 2 cases: $(X_i , \Delta_i)$ is klt for each $i$, or $(X_i , \Delta_i)$ is not klt for each $i$.

Suppose that $(X_i , \Delta_i)$ is not klt for each $i$. By Conjecture A$_d$, there exists a positive integer $M$ depending only on $m,d$ such that $\PA(X, Mm(K_X+\Delta))\neq 0$. 
Then we claim that $\A(X, Mm(K_X+\Delta))=\PA(X, Mm(K_X+\Delta))\neq 0$. In fact, 
it suffices to show that if $g:(X_i, \Delta_i) \dashrightarrow (X_j, \Delta_j)$ is a B-birational map and $s\in \PA(X, Mm(K_X+\Delta))$, then $g^*(s|_{X_j})=s|_{X_i}$. As $H^0(X_i, Mm(K_{X_i}+\Delta_i))\simeq H^0(X_j, Mm(K_{X_j}+\Delta_j))\simeq \mathbb{C}$, we may assume that $g^*(s|_{X_j})=\lambda s|_{X_i}$ for some $\lambda\in \mathbb{C}^*$. On the other hand, $g$ induces a natural B-birational map $\tilde{g}\in \Bir(X, \Delta)$ by exchanging $X_i$ and $X_j$, hence by \cite[Lemma 4.9]{fujino-ab}, $g^*(s|_{\lfloor{\Delta_j}\rfloor})= s|_{\lfloor{\Delta_i}\rfloor}$. 
So either $\lambda=1$ or $s|_{\lfloor{\Delta_i}\rfloor}=0$. If $s|_{\lfloor{\Delta_i}\rfloor}=0$, then $s|_{\lfloor{\Delta_j}\rfloor}=0$ and hence $g^*(s|_{X_j})= s|_{X_i}=0$. So in both cases $g^*(s|_{X_j})= s|_{X_i}$.

If $(X_i , \Delta_i)$ is klt for each $i$, then $$\PA(X, m(K_X+\Delta))=H^0(X, m(K_X+\Delta)) = \bigoplus_i H^0(X, m(K_{X_i}+\Delta_i)).$$
Fix a section $(s_i)_i \in H^0(X, m(K_X+\Delta)) $ where $s_i \in H^0(X, m(K_{X_i}+\Delta_i)) $ is non-zero for each $i$.
 By Conjecture B'$_{d}$, there exists a positive integer $N$ depending only on $m,d$ such that $|\rho_m(\Bir(X_i,\Delta_i))|\leq N$.
This implies that $\rho_{N!m}(\Bir(X_i,\Delta_i))$ acts trivially on $H^0(X, N!m(K_{X_i}+\Delta_i))\simeq \mathbb{C}$ for each $i$. Denote $t=(t_i)_i=(s_i^{N!})_i \in H^0(X, N!m(K_X+\Delta)) $.
If $g_{ij}: (X_i, \Delta_i)\dashrightarrow (X_j, \Delta_j)$ is a B-birational map, then we can write $g_{ij}^*t_j=\lambda_{ij}t_i$ for some $\lambda_{ij}\in \mathbb{C}^*$. Note that if $f_{ij}: (X_i, \Delta_i)\dashrightarrow (X_j, \Delta_j)$ is another B-birational map, then $f_{ij}^{-1}\circ g_{ij}\in \Bir(X_i,\Delta_i)$. As $\rho_{N!m}(\Bir(X_i,\Delta_i))$ acts trivially on $H^0(X, N!m(K_{X_i}+\Delta_i))$, this implies that  $\lambda_{ij}$ is independent of the choice of $g_{ij}$  and $\lambda_{ii}=1$ for each $i$. So we can find $\lambda_i\in \mathbb{C}^*$ for each $i$ such that $\lambda_{ij}=\lambda_i\lambda_j^{-1}$ if there is a B-birational map $(X_i, \Delta_i)\dashrightarrow (X_j, \Delta_j)$.
Then it is easy to check that the non-zero section $t'=(\lambda_i t_i)_i \in H^0(X, N!m(K_X+\Delta)) =\PA(X, N!m(K_X+\Delta))$ is actually admissible.
\end{proof}

From the above inductive arguments, it is easy to get the following corollary.
\begin{cor}\label{cor C2A3}
Conjecture C$_{\leq 2}$ and Conjecture A$_{\leq 3}$ hold ture. 
\end{cor}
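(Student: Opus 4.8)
The plan is to assemble the corollary entirely from the three inductive propositions already proved, with Theorem \ref{thm-klt} supplying the one genuinely new piece of input (Conjecture B'$_2$) and the known one-dimensional case of the main conjecture supplying the rest. The logical skeleton is the two-step implication chain
$$\text{C}_{d-1}\ \Rightarrow\ \text{A}_d, \qquad \text{A}_d+\text{B}'_d\ \Rightarrow\ \text{C}_d,$$
given by Proposition \ref{C to A} and Proposition \ref{AB to C} respectively. Feeding the output of one implication into the input of the next lets me climb one dimension at a time, provided I can both start the induction in low dimension and supply the klt statement B'$_d$ at each stage.

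First I would pin down the base cases. In dimension $0$ a dlt pair carries no boundary divisor, so $\lfloor\Delta\rfloor=0$ and every non-zero section is vacuously pre-admissible; hence Conjecture A$_0$ holds (with $M=1$), and Conjecture B'$_0$ holds because $\Bir$ of a reduced point is trivial. Proposition \ref{AB to C} then yields Conjecture C$_0$, and Proposition \ref{C to A} upgrades this to Conjecture A$_1$. For the klt input in dimension $1$ I would invoke the known one-dimensional main conjecture (e.g. \cite[Proposition 8.4]{fmx19}), which gives Conjecture B'$_1$. Applying Proposition \ref{AB to C} to A$_1$ and B'$_1$ then produces Conjecture C$_1$.

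With Conjecture C$_1$ in hand I would run the same pair of steps once more in dimension $2$: Proposition \ref{C to A} gives Conjecture A$_2$, Theorem \ref{thm-klt} gives Conjecture B'$_2$, and Proposition \ref{AB to C} combines them into Conjecture C$_2$. A final application of Proposition \ref{C to A} in dimension $3$ then delivers Conjecture A$_3$. Since I will have established C$_0$, C$_1$, C$_2$ and A$_0$, A$_1$, A$_2$, A$_3$ outright, both Conjecture C$_{\le 2}$ and Conjecture A$_{\le 3}$ follow directly; the dimension-lowering remark (Conjecture $\bullet_d\Rightarrow\bullet_{d-1}$) is not even required here, although it provides an independent consistency check.

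I expect essentially no difficulty in the bookkeeping itself: all the substantive content is concealed in the inputs, above all in Theorem \ref{thm-klt}, whose proof via log boundedness and cyclic coverings is where the real work lies. The only point in the assembly that demands care is initializing the induction correctly, since Proposition \ref{AB to C} requires \emph{both} the pre-admissibility statement A$_d$ and the klt boundedness statement B'$_d$ in the \emph{same} dimension. I must therefore make sure that at each level the A-statement has already been manufactured from the previous level before I attempt to build the C-statement, and that the klt inputs B'$_0$, B'$_1$, B'$_2$ are genuinely in place—the last of these being exactly the content Theorem \ref{thm-klt} provides.
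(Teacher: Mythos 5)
Your proposal is correct and is essentially the paper's own argument: the corollary is assembled by iterating Proposition \ref{C to A} and Proposition \ref{AB to C}, with Theorem \ref{thm-klt} supplying Conjecture B'$_{\leq 2}$ (the paper states this in one line, leaving the low-dimensional base cases implicit, which you simply spell out). The only cosmetic difference is that the paper obtains B'$_{\leq 1}$ from B'$_2$ via the fiber-product remark, whereas you invoke the known one-dimensional result directly; both are fine.
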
 
\begin{proof}
This directly follows from Propositions \ref{C to A} and \ref{AB to C} as Conjecture B'$_{\leq 2}$ holds true by Theorem \ref{thm-klt}. 
\end{proof}
\begin{rem}\label{remark 2nd proof thm-dlt}
By Proposition \ref{AB to B} and Corollary \ref{cor C2A3}, Conjecture B$_{\leq 2}$ holds true. Note that here we only use Theorem \ref{thm-klt}, so this indeed gives an alternative proof of Theorem \ref{thm-dlt}.
\end{rem}

\subsection{Applications to the index conjecture}
In this subsection, we give applications of Theorem \ref{main-thm} to the index conjecture.

The following propositions are well-known to experts as inductive steps towards the index conjecture.
\begin{prop}[{cf. \cite[Theorem 1.5]{gongyo}}]\label{prop dlt to slc}
If Conjecture \ref{index conj slc} holds true for dlt pairs of dimension $d$ and Conjecture A$_d$ holds, then Conjecture \ref{index conj slc} holds true in dimension $d$. 
\end{prop}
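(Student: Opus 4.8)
The plan is to reduce the slc case to the dlt case via normalization and Kollár's gluing theory, using Conjecture A$_d$ to manufacture a global section that descends to the glued space. So let $(X,\Delta)$ be a projective slc pair of dimension $d$ with coefficients in $I$ and $K_X+\Delta\sim_{\mathbb{Q}}0$. First I would pass to the normalization $\nu\colon (X^\nu,\Theta)\to(X,\Delta)$, where $K_{X^\nu}+\Theta=\nu^*(K_X+\Delta)$ and $\Theta$ is the sum of $\nu_*^{-1}\Delta$ and the conductor. Then $(X^\nu,\Theta)$ is an lc log Calabi--Yau pair (possibly disconnected) of dimension $d$ whose coefficients lie in the finite set $I\cup\{1\}$. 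Taking a crepant dlt modification $f\colon(Y,\Delta_Y)\to(X^\nu,\Theta)$ keeps the coefficients in $I\cup\{1\}$, so the hypothesis that Conjecture \ref{index conj slc} holds for dlt pairs of dimension $d$ yields a positive integer $m_0$, depending only on $I$ and $d$, with $m_0(K_Y+\Delta_Y)\sim 0$; by the crepant-birational invariance of the index this gives $m_0(K_{X^\nu}+\Theta)\sim 0$.

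Next I would invoke Conjecture A$_d$ for the (not necessarily connected) dlt pair $(Y,\Delta_Y)$ with the integer $m_0$: it produces an integer $M$, depending only on $m_0$ and $d$ and hence only on $I$ and $d$, together with a non-zero pre-admissible section $t\in\PA(Y,Mm_0(K_Y+\Delta_Y))$. The point of pre-admissibility is that, by Definition \ref{admin}, the restriction of $t$ to $\lfloor\Delta_Y\rfloor$ is admissible, i.e.\ compatible with every B-birational map between the components of $\lfloor\Delta_Y\rfloor$. The crucial observation is that the gluing involution $\tau$ of Kollár's theory, which acts on the normalization of the conductor and recovers $X$ from $X^\nu$, is realized by such crepant B-birational maps; hence admissibility of $t|_{\lfloor\Delta_Y\rfloor}$ forces $t$ to be $\tau$-compatible along the conductor. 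By the descent of sections through the gluing, $t$ then descends to a non-zero section of $Mm_0(K_{\bar X}+\Delta_{\bar X})$ on a crepant sdlt model $(\bar X,\Delta_{\bar X})$ of $(X,\Delta)$ whose normalization is $(Y,\Delta_Y)$. Since a non-zero section of a $\mathbb{Q}$-torsion line bundle trivializes it, we get $Mm_0(K_{\bar X}+\Delta_{\bar X})\sim 0$, and again by crepant-birational invariance $Mm_0(K_X+\Delta)\sim 0$; setting $m=Mm_0$ finishes the proof, as $m$ depends only on $I$ and $d$.

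The main obstacle I anticipate is this gluing step: one must arrange the dlt modification of $X^\nu$ to be compatible with the conductor involution so that it glues to the sdlt model $(\bar X,\Delta_{\bar X})$, and then verify that $\tau$ genuinely appears among the B-birational maps governing admissibility, so that an admissible restriction descends. This is precisely where Kollár's gluing theory and the theory of sdlt models (as in \cite{kollar, kollar92, fujino-slc}) enter, and where the definition of (pre-)admissible sections is tailored to match the descent condition. The remaining points---finiteness of the coefficient set under normalization, preservation of $K+\Delta\sim_{\mathbb{Q}}0$, and the uniformity of $m_0$ and $M$ in $I$ and $d$---are routine once this compatibility is in place.
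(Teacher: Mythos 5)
Your proposal is correct and follows essentially the same route as the paper: normalize, take a dlt blowup, apply the dlt case of the index conjecture to bound the index of $(Y,\Gamma)$, invoke Conjecture A$_d$ to produce a non-zero pre-admissible section, and descend it to $X$. The gluing/descent step you flag as the main obstacle is exactly what the paper disposes of by citing \cite[Lemma 4.2]{fujino-ab}, which is designed precisely so that pre-admissible sections on the dlt model descend to sections on the slc pair.
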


\begin{proof}The proof is essentially the same as \cite[Theorem 1.5]{gongyo}.
Let $(X, \Delta)$ be a projective slc pair of dimension $d$ such that the coefficients of $\Delta$ are in $I$
and $K_X+\Delta\sim_{\mathbb Q} 0$. We may assume that $X$ is connected.
Take the normalization $X'=\amalg_i X'_i \to X = \cup_i X_i$ and a dlt blowup (\cite[Theorem 3.1]{kk}) on each $X'_i$. We get $\phi: (Y,\Gamma) \to (X,\Delta)$ such that $(Y, \Gamma)$ is a projective (not necessarily connected) dlt pair and $K_Y + \Gamma = \phi^*(K_X + \Delta)$. Note that $K_Y + \Gamma \sim_\mathbb{Q} 0$ and the coefficients of $\Gamma$ are in $I\cup \{1\}$, hence by Conjecture \ref{index conj slc} for dlt pairs of dimension $d$, there exists a positive integer $m$ depending only on $d, I$ such that $m(K_Y+\Gamma)\sim 0$. By Conjecture A$_d$, after replacing $m$ by a constant multiple, we may assume that $m(K_Y+\Gamma)$ has a non-zero pre-admissible section, which descends to a non-zero section of $m(K_X+\Delta)$ by \cite[Lemma 4.2]{fujino-ab}. Hence $m(K_X+\Delta)\sim 0$.
\end{proof}

\begin{prop}[{cf. \cite[Theorem 1.7]{xu2}}]\label{prop slc to nonklt}
If Conjecture \ref{index conj slc} holds true in dimension $d-1$, then Conjecture \ref{index conj slc} holds true for connected non-klt lc pairs in dimension $d$. 
\end{prop}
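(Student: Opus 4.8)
The plan is to reduce the $d$-dimensional non-klt problem to the $(d-1)$-dimensional index conjecture by passing to a boundary divisor via adjunction, and then to bound the global index through a cyclic covering argument controlled by the connectedness lemma (Lemma~\ref{lem-num-1}). First I would take a dlt modification $\phi\colon (Y,\Gamma)\to (X,\Delta)$ (\cite[Theorem 3.1]{kk}), so that $(Y,\Gamma)$ is a projective connected $\mathbb Q$-factorial dlt pair with $K_Y+\Gamma=\phi^*(K_X+\Delta)\sim_{\mathbb Q}0$ and with coefficients of $\Gamma$ in the finite set $I\cup\{1\}$. Since $(X,\Delta)$ is non-klt, $S:=\lfloor\Gamma\rfloor\neq 0$. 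As $\phi$ is an isomorphism in codimension one, a standard pushforward argument shows that $m(K_Y+\Gamma)\sim 0$ implies $m(K_X+\Delta)\sim 0$ (a trivialization descends to a global section of the torsion divisorial sheaf $\mathcal O_X(m(K_X+\Delta))$, forcing it to be trivial). Hence it suffices to bound the index $n$ of $(Y,\Gamma)$, i.e. the minimal $n>0$ with $n(K_Y+\Gamma)\sim 0$.

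Next, by adjunction $K_S+\Gamma_S=(K_Y+\Gamma)|_S\sim_{\mathbb Q}0$, where $(S,\Gamma_S)$ is an slc (in fact sdlt) pair of dimension $d-1$. The coefficients of $\Gamma_S$ coming from the different a priori lie only in a DCC set determined by $I$, but applying Global ACC (\cite[Theorem 1.5]{HMX14}, on the normalization of $S$) together with $K_S+\Gamma_S\sim_{\mathbb Q}0$ forces them into a finite set $I'$ depending only on $I$ and $d$. Thus Conjecture~\ref{index conj slc} in dimension $d-1$, applied with $I'$, yields a positive integer $m_0$, depending only on $I$ and $d$, such that $m_0(K_S+\Gamma_S)\sim 0$; equivalently $m_0(K_Y+\Gamma)|_{S_j}\sim 0$ on every connected component $S_j$ of $S$.

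Finally I would bound $n$ by a covering argument. Let $p\colon \tilde Y\to Y$ be the global index-one (canonical) cover associated to the torsion class of $K_Y+\Gamma$ of order $n$; it is quasi-\'etale, and $\tilde Y$ is connected and normal with $K_{\tilde Y}+\tilde\Gamma=p^*(K_Y+\Gamma)\sim_{\mathbb Q}0$, where $\tilde\Gamma=p^*\Gamma$ and $\lfloor\tilde\Gamma\rfloor=p^{-1}(S)$. Since $p$ is \'etale over the generic point of each $S_j$, the number of connected components of $p^{-1}(S_j)$ equals $n/\mathrm{ord}_j$, where $\mathrm{ord}_j$ is the order of the class $(K_Y+\Gamma)|_{S_j}$; as $m_0(K_Y+\Gamma)|_{S_j}\sim 0$ we have $\mathrm{ord}_j\mid m_0$, so $p^{-1}(S_j)$ has at least $n/m_0$ connected components. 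Hence $\lfloor\tilde\Gamma\rfloor$ has at least $n/m_0$ connected components. On the other hand, $(\tilde Y,\tilde\Gamma)$ is a projective connected lc pair with $K_{\tilde Y}+\tilde\Gamma\sim_{\mathbb Q}0$, so by Lemma~\ref{lem-num-1} its reduced boundary $\lfloor\tilde\Gamma\rfloor$ has at most $2$ connected components. Therefore $n/m_0\leq 2$, giving $n\leq 2m_0$, and we may take $m=2m_0$, which depends only on $I$ and $d$.

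The main obstacle I expect is the component-counting step: making rigorous that the number of connected components of $p^{-1}(S_j)$ is governed by the order of the restricted class $(K_Y+\Gamma)|_{S_j}$, and in particular that the merely quasi-\'etale (rather than \'etale) behavior of $p$ away from the generic points of $S$ causes no collapsing of components. A secondary technical point is the reduction of the coefficients of the different from a DCC set to a finite set via Global ACC. Once both are settled, the connectedness lemma supplies the essential dichotomy and the bound $n\le 2m_0$ follows immediately.
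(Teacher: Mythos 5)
Your overall strategy coincides with the paper's: dlt modification, adjunction to the reduced boundary combined with Global ACC to land in a finite coefficient set, the $(d-1)$-dimensional conjecture applied on that boundary, and then a cyclic cover whose boundary preimage is controlled by the connectedness lemma (Lemma~\ref{lem-num-1}). Two points in your write-up need repair, and the second is exactly the ``main obstacle'' you flagged. First, a dlt modification is not an isomorphism in codimension one in general (it extracts divisors of discrepancy $-1$); the descent of $m(K_Y+\Gamma)\sim 0$ to $m(K_X+\Delta)\sim 0$ is still valid, but for a different reason: a rational function $f$ with $\mathrm{div}(f)=m(K_Y+\Gamma)=\phi^*(m(K_X+\Delta))$ pushes forward to give $\mathrm{div}(f)=m(K_X+\Delta)$ on $X$.

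Second, and more substantively: the index-one cover of the torsion class of the $\mathbb{Q}$-divisor $K_Y+\Gamma$ is \emph{not} quasi-\'etale when $\Gamma$ has fractional coefficients --- the cyclic cover construction branches over $\Supp\{\Gamma\}$. Consequently $\tilde\Gamma$ is not $p^*\Gamma$; it must be defined by crepant pullback, its coefficient along a ramified component is $1-e(1-c)$ (with $e$ the ramification index and $c$ the coefficient in $\Gamma$) and can be negative, so $(\tilde Y,\tilde\Gamma)$ is in general only sub-lc and Lemma~\ref{lem-num-1}, stated for lc pairs with effective boundary, does not directly apply. The paper sidesteps this by first replacing $m$ by a multiple so that $m(K_X+\Delta)$ is an integral Weil divisor and then taking the index-one cover $\pi\colon Y\to X$ of \emph{that} divisor, of degree $r$ equal to its order; this cover is \'etale in codimension one, the induced boundary $\Delta_Y$ is effective, and since $m(K_X+\Delta)|_{\lfloor\Delta\rfloor}\sim 0$ the preimage $\pi^{-1}\lfloor\Delta\rfloor$ splits into at least $r$ connected components, whence $r\le 2$ and $2m(K_X+\Delta)\sim 0$. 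This choice also disposes of your component-counting worry: no refined count of the form $n/\mathrm{ord}_j$ is needed, because triviality of the restricted class already forces complete splitting of the cover over the boundary.
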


Here we give a simple proof different from \cite[Theorem 1.7]{xu2}, which follows the idea of \cite[9.9]{PS}.
\begin{proof}
Let $(X, \Delta)$ be a projective connected non-klt lc pair of dimension $d$ such that the coefficients of $\Delta$ are in $I$
and $K_X+\Delta\sim_{\mathbb Q} 0$. 
After taking a dlt blowup (\cite[Theorem 3.1]{kk}) and replacing $I$ by $I\cup \{1\}$, we may assume that $(X, \Delta)$ is dlt and in particular $\lfloor \Delta\rfloor \neq 0$.

 If we write $(K_X+\Delta)|_{\lfloor \Delta \rfloor}=K_{\lfloor \Delta \rfloor}+\Theta$, then $({\lfloor \Delta \rfloor}, \Theta)$ is an sdlt pair of dimension $d-1$ by \cite[Remark 1.2(3)]{fujino-ab} such that $K_{\lfloor \Delta \rfloor}+\Theta\sim_{\mathbb Q} 0$. By the adjunction formula (\cite[Corollary 16.7]{kollar92} or \cite[Lemma 4.1]{HMX14}), the coefficients of $\Theta$ belong to a set $D(I)$ which is a DCC (i.e., descending chain condition) set of rational numbers depending only on $I$. Then by the Global ACC (\cite[Theorem 1.5]{HMX14}), the coefficients of $\Theta$ belong to a finite set $I_0$ of rational numbers depending only on $I$. Hence
by Conjecture \ref{index conj slc} in dimension $d-1$, there exists a positive integer $m$ depending only on $d, I_0$ such that $m(K_X+\Delta)|_{\lfloor \Delta \rfloor} \sim 0$. Note that $m$ depends only on $d, I$ as $I_0$ depends only on $I$.
As $I$ is a finite set, after replacing $m$ by a multiple, we may also assume that $m(K_X+\Delta)$ is a Weil divisor. 
Take the minimal positive integer $r$ such that $rm(K_X+\Delta)\sim 0$.
Take the index 1 cover of $m(K_X+\Delta)$, say $\pi: Y \to X$, then $K_Y+\Delta_Y=\pi^*(K_X+\Delta) \sim_\mathbb{Q} 0$ where $\Delta_Y$ is an effective $\mathbb Q$-divisor since $\pi$ is \'etale in codimension one. Then $(Y, \Delta_Y)$ is a connected lc pair of dimension $d$ by \cite[Proposition 5.20]{kollar-mori}. As $m(K_X+\Delta)|_{\lfloor \Delta \rfloor} \sim 0$, $\lfloor\Delta_Y\rfloor=\pi^{-1}\lfloor\Delta\rfloor$ has at least $r$ connected components. Hence by the connectedness lemma (Lemma \ref{lem-num-1}), $r\leq 2$. In particular, $2m(K_X+\Delta)\sim 0$.
\end{proof}

Now we can get some partial results of the index conjecture in low dimensions. 

\begin{proof}[Proof of Corollary \ref{ind-threefold}]
It suffices to proof the corollary for $\dim X=3$, as lower dimensional case can be reduced to higher dimensional case by taking fiber products with an elliptic curve.
By Proposition \ref{prop dlt to slc} and Corollary \ref{cor C2A3}, it suffices to prove Conjecture \ref{index conj slc} for dlt pairs in dimension $3$.
If $X$ is klt and $\Delta=0$, then this is proved by \cite[Corollary 1.7]{jiang}.
The remaining cases are proved by \cite[Theorem 1.13]{xu2}.
\end{proof}

\begin{proof}[Proof of Corollary \ref{ind-fourfold}]
This follows directly from Proposition \ref{prop slc to nonklt} and Corollary \ref{ind-threefold}.
\end{proof}


\end{document}